\newcolumntype{R}[1]{>{\raggedleft\arraybackslash }b{#1}}
\newcolumntype{L}[1]{>{\raggedright\arraybackslash }b{#1}}
\newcolumntype{C}[1]{>{\centering\arraybackslash }b{#1}}
\definecolor{dkgreen}{rgb}{0,0.6,0}
\definecolor{gray}{rgb}{0.5,0.5,0.5}
\definecolor{mauve}{rgb}{0.58,0,0.82}
\numberwithin{equation}{section}
\newtheorem{theorem}{Theorem}[section]
\newtheorem{lemma}[theorem]{Lemma}
\newtheorem{proposition}[theorem]{Proposition}
\newtheorem{definition}[theorem]{Definition}
\newtheorem{remark}[theorem]{Remark}
\begin{document}
\title
[\hfil Analysis of a spatio-temporal advection-diffusion APC model]{Analysis of a spatio-temporal advection-diffusion model for human behaviors during a catastrophic event}
\author[K. Khalil et al.]
{K. Khalil$^{1,*}$, V. Lanza$^1$, D. Manceau$^1$, M.A. Aziz-Alaoui$^1$, D. Provitolo$^2$}
  
\address{K. Khalil,\newline
LMAH, University of Le Havre Normandie, FR-CNRS-3335, ISCN, Le Havre 76600, France.}
\email{kamal.khalil.00@gmail.com}

\address{V. Lanza,\newline
LMAH, University of Le Havre Normandie, FR-CNRS-3335, ISCN, Le Havre 76600, France.}
\email{valentina.lanza@univ-lehavre.fr }

\address{D. Manceau,\newline
LMAH, University of Le Havre Normandie, FR-CNRS-3335, ISCN, Le Havre 76600, France.}
\email{david.manceau@univ-lehavre.fr}

\address{M-A. Alaoui,\newline
LMAH, University of Le Havre Normandie, FR-CNRS-3335, ISCN, Le Havre 76600, France.}
\email{aziz.alaoui@univ-lehavre.fr}

\address{D. Provitolo\newline
Université Côte d'Azur, CNRS, Observatoire de la Côte d'Azur, IRD, Géoazur, UMR 7329, Valbonne, France.}
\email{Damienne.provitolo@geoazur.unice.fr}

%\thanks{Submitted December 19, 2007. Published April 22, 2008.}
\thanks{$^*$Corresponding author: K. Khalil; \texttt{kamal.khalil@univ-lehavre.fr}}
\subjclass[2000]{34G20, 47D06}
\keywords{First-order macroscopic crowd models; human behaviors; mathematical modeling; panic; semigroup theory.} 
{\renewcommand{\thefootnote}{} \footnote{
$^{1}$LMAH, University of Le Havre Normandie, FR-CNRS-3335, ISCN, Le Havre 76600, France.\\
$^{2}$Université Côte d'Azur, CNRS, Observatoire de la Côte d'Azur, IRD, Géoazur, UMR 7329, Valbonne, France.}}

\begin{abstract}
In this work, using the theory of first-order macroscopic crowd models, we introduce a compartmental advection-diffusion model, describing the spatio-temporal dynamics of a population in different human behaviors (alert, panic and control) during a catastrophic event. For this model, we prove the local existence, uniqueness and regularity of a solution, as well as the positivity and $L^1$--boundedness of this solution. Then, in order to study the spatio-temporal propagation of these behavioral reactions within a population during a catastrophic event, we present several numerical simulations for different evacuation scenarios.

\end{abstract}
\maketitle
\tableofcontents

\section{Introduction}
In the last decades the world  has known some radical changes at almost all levels such as technological developments, climatic changes and human evolution. Due to these factors, populations (in both, developed or undeveloped countries)  are aggressively facing many natural disasters (tsunamis, earthquakes), technological events and terrorist attacks. In particular situations of sudden, unexpected and without alert disasters require high security measures and strategies in order to predict and manage the movement and the behavior of a crowd. During a catastrophe, people may experience many different behaviors, but there is still few information about the dynamics and the succession of such behaviors during the event, see \cite{Crocq,Pi_Re,Re_Ru}. Thus, for the development of an efficient disaster management strategy, it becomes necessary not only to take into account the disaster features but also the different psychological human behaviors during the disaster event. 

Recently, several pedestrians crowd models have been developed. Their main objective is to predict the movements of a crowd in different environmental situations. Mathematical models of human crowds are mainly divided into two categories, namely, microscopic models and macroscopic ones, see the recent survey \cite{Bellomo3} and papers \cite{Masmoudi,Bellomo,Bellomo2} for more details. In the microscopic approach, individuals are treated separately as particles and the evolution is determined using Newton's second law and by considering physical and social forces that describe the interaction among individuals as well as their interactions with the physical surrounding (for more details we refer to the works of Helbing described in \cite{MauryBook}). The macroscopic approach, that we adopt in this paper, considers a crowd as a whole quantity, without recognizing individual differences, and it is therefore more suitable to the study of the movement of an extremely large number of pedestrians. 
In particular, first-order macroscopic models introduced by Hughes \cite{Hughes2002} (see also \cite{Coscia})  are based on a mass conservation equation and a density-velocity closure equation with suitable boundary conditions. Furthermore, several models are devoted to study the dynamics of multiple pedestrian species in the context of macroscopic first-order systems (see \cite{Burger,Burger2,JMCoron,Pietschmann,DiFrancesco,Gomes,Hughes2002,Rosini,Sim_Lan_Hug} and references therein). In \cite{Hughes2002} Hughes studied crowds with large density of multiple pedestrian classes with different walking characteristics and destinations (identified by  the index $i$). The system reads as
\begin{equation}\label{eq:mainSystem Intro2}
            \left\{
                 \begin{aligned}
\partial_t \rho_i + \nabla \cdot  q_{i}(\rho) &=0 &\quad &\text{ in } [0,T)\times\Omega, \\
  q_{i}\cdot n & =q^{0}_i \cdot n &\quad  &\text{ in } [0,T)\times\partial \Omega ,\\
                    \rho_{i} (0)&=\rho_i^{0}&\quad  &\text{ in } \Omega, &
                   \end{aligned}\right.
\end{equation}
for $i=1,\ldots,N \, (N\geq 2)$, where $ \Omega \subset \mathbb{R}^{2} $ is a bounded domain with smooth boundary $ \partial \Omega $ and $ q_{i}(\rho)= \rho_i v(\rho)\nu_i $. The velocity is defined as $v(\rho)=A-B \tilde{\rho}$, thus it is linear with respect to the total population $ \tilde{\rho}$ and is the same for all populations. On the contrary, each population can have a different direction of the movement  $\nu_i$. Finally, for each population $i$, $q_i^0$ is the outflow from the boundary in the direction of the normal vector $n$ and $ \rho^{0}_{i} $ is the initial data. Moreover, in \cite{Burger} authors studied a nonlinear drift-diffusion model with in-outflow boundary conditions for the transport of particles. Notice that these systems are consisting of conservative equations  (with no reactions terms). A non-conservative system is proposed in \cite{Marino} but the authors consider only one population species and Neumann homogeneous boundary conditions, namely 
\begin{equation}\label{eq:mainSystem Intro4}
            \left\{
                 \begin{aligned}
\partial_t \rho + \nabla \cdot  q(\rho) &=\alpha(t,x)f(\rho)-\beta(t,x)\rho &\quad &\text{ in } [0,T)\times\Omega, \\
  \nabla \rho\cdot n & =0 &\quad  &\text{ in } [0,T)\times\partial \Omega ,\\
                    \rho (0)&=\rho^{0}&\quad  &\text{ in } \Omega , &
                   \end{aligned}\right.
\end{equation}
where $ q(\rho)= -\nabla \rho +f(\rho) \nabla V(\rho) $ where $f(\rho)=\rho(1-\rho)$ and  $V:\mathbb{R}^{n}\longrightarrow \mathbb{R}$ is a potential. 
 In all these papers, either there is no mention about the behaviors of the pedestrians or all the pedestrians have the same emotional state (mainly panic).

In the recent years the RCP (Reflex-Panic-Control) \cite{ijbc} and the APC (Alert-Panic-Control)  \cite{LanzaFede} models have been proposed in order to describe the evolution in time of human behaviors during a catastrophe. They both consist in systems of nonlinear ODEs and have been devised following the structure of the  compartmental models in mathematical epidemiology.  In \cite{Lanza_et_al.} the spatial dynamics has been integrated in the APC model, by considering the space as a discrete variable. In \cite{Cantin_Aziz} the first system of reaction-diffusion equations describing a population with several behaviors has been proposed.

%for a PDE approach for a system describing a population in a panic situation)

The aim of the present paper is to introduce a spatio-temporal macroscopic first-order non-conservative pedestrians model describing the evolution of a population in a sudden, unexpected and without warning signs disaster. For this purpose, starting from the nonlinear ODE APC model proposed in \cite{Lanza_et_al.,LanzaFede}, we introduce a non-conservative first-order macroscopic model to describe the spatio-temporal dynamics of a population exhibiting different behavioral states. Our model reads as
\begin{equation}\label{eq:ourmodel}
            \left\{
                 \begin{aligned}
\partial_t \rho_i + \nabla \cdot  q_{i}(\rho) &=f_i(\rho_1,\dots,\rho_5) &\quad &\text{ in } [0,T)\times\Omega, \\
  q_{i}\cdot n & =q^{0}_i \cdot n &\quad  &\text{ in } [0,T)\times\partial \Omega ,\\
                    \rho_{i} (0)&=\rho_i^{0}&\quad  &\text{ in } \Omega, &
                   \end{aligned}\right.
\end{equation}
where, for each $i=1,\ldots,5,$ $\rho_i$  is the density of population representing a specific human behavior, $
q_i:=-d_i\nabla \rho_i+\rho_i \vec{v}_i(\rho)$ is the corresponding flux, $f_i$ is a given nonlinear coupling reaction term and $\rho_i^0$ is the initial population density. \\
Therefore, we establish a mathematical analysis of the model \eqref{eq:ourmodel}. By virtue of the abstract boundary evolution equations and the theory of semigroups of bounded linear operators (see \cite{Amann2,Desh,Greiner}), we prove the local existence, uniqueness and regularity of a solution of this system. Moreover, using the positively invariant regions approach due to Martin et al., see \cite{Hirsch-Smith, Martin-Smith, Martin}, we provide sufficient conditions on the parameters of our model ensuring the positivity. We also provide different numerical simulations for several scenarios of evacuation of a population in an emergency situation. 

The organization of this paper is as follows.  In Section \ref{sec:2}, we briefly present the equations of the APC model, we recall the structure of a first-order macroscopic pedestrians model and we introduce our advection-diffusion pedestrians APC model \eqref{eq:ourmodel}. Section \ref{sec:3} is devoted to the mathematical analysis of the model. Finally, Section \ref{sec:4} presents numerical results about different scenarios of evacuation.

\section[A spatio-temporal advection-diffusion APC model]{A spatio-temporal advection-diffusion model for human behaviors during a catastrophic event}\label{sec:2}

\subsection{The temporal model of the human behaviors of a population during a catastrophic event}\label{Section0}

In this section, we briefly present a model describing the evolution of a population during a sudden, rapid and unpredictable catastrophic event. The model describes the evolution of different human behaviors during a disaster, see \cite{Lanza_et_al., LanzaFede}. Depending on the emotional charges and their regulation, the different human reactions of a population in an emergency situation have been subdivided into three main categories, namely, alert, panic and control behaviors. More particularly, pedestrians in control are defined as individuals capable of managing enormous emotional charges in a dangerous situation, whereas pedestrians in a panic state are those who are incapable of regulating the emotional charges during a catastrophic event. Finally, alert pedestrians are those who have not yet understood the situation and are assimilating the event they are going to live. Each pedestrian experiences an alert behavior as soon as he is impacted by the catastrophe. Thus, the APC  model considers the time evolution of the following five variables:
\begin{itemize}
\item the density of individuals in an alert state $\rho_1(t)$,
\item the density of individuals that exhibit panic behaviors $\rho_2(t)$, 
\item the density of individuals in a state of control $\rho_3(t)$,
\item the density of individuals in a daily behavior before the catastrophe $\rho_4(t)$,
\item the density of individuals in a behavior of everyday life after the disaster $\rho_5(t)$, 
\item the density of individuals who die during the disaster $\rho_6(t)$. 
\end{itemize}
The corresponding model is given by the following nonlinear ODE system that matches with the classical compartmental SIR models ($t\geq 0$):
\begin{equation}\label{eq:main1System APC2 ODEs}
            \left\{
                 \begin{array}{l l l l }
                  \rho_1' = &  -(b_1+b_2+\delta_1) \rho_1 + \gamma(t) q+b_3 \rho_3 +b_4  \rho_2-\mathcal{F}(\rho_1, \rho_3) - \mathcal{G}(\rho_1, \rho_2),\\
                  \rho_2' = &-(b_4 + c_1 + \delta_2)\rho_2+ b_2 \rho_1 +c_2 \rho_3 +\mathcal{G}(\rho_1, \rho_2)- \mathcal{H}(\rho_2,\rho_3),\\
                  \rho_3' = & -(b_3 +c_2+\delta_3)\rho_3  +b_1 \rho_1 +c_1 \rho_5  -\phi(t)\rho_3 + \mathcal{F}(\rho_1,\rho_3)+\mathcal{H}(\rho_2,\rho_3), \\
                  \rho_4' = &  - \gamma(t) q ,\\
                  \rho_5' =& \phi(t) \rho_3, \\
                  \rho_6' =& \delta_1 \rho_1 + \delta_2 \rho_2 +\delta_3 \rho_3,\\
                 \end{array}\right.
      \end{equation} 
with the initial condition $(\rho_1,\rho_2,\rho_3,\rho_4,\rho_5,\rho_6)(0)=(0,0,0,1,0,0)$, since the population is supposed to be in a daily behavior before the onset of the disaster.
 \begin{figure}
\begin{center}
\begin{tabular}{c}
\includegraphics[width=8cm,height=6cm]{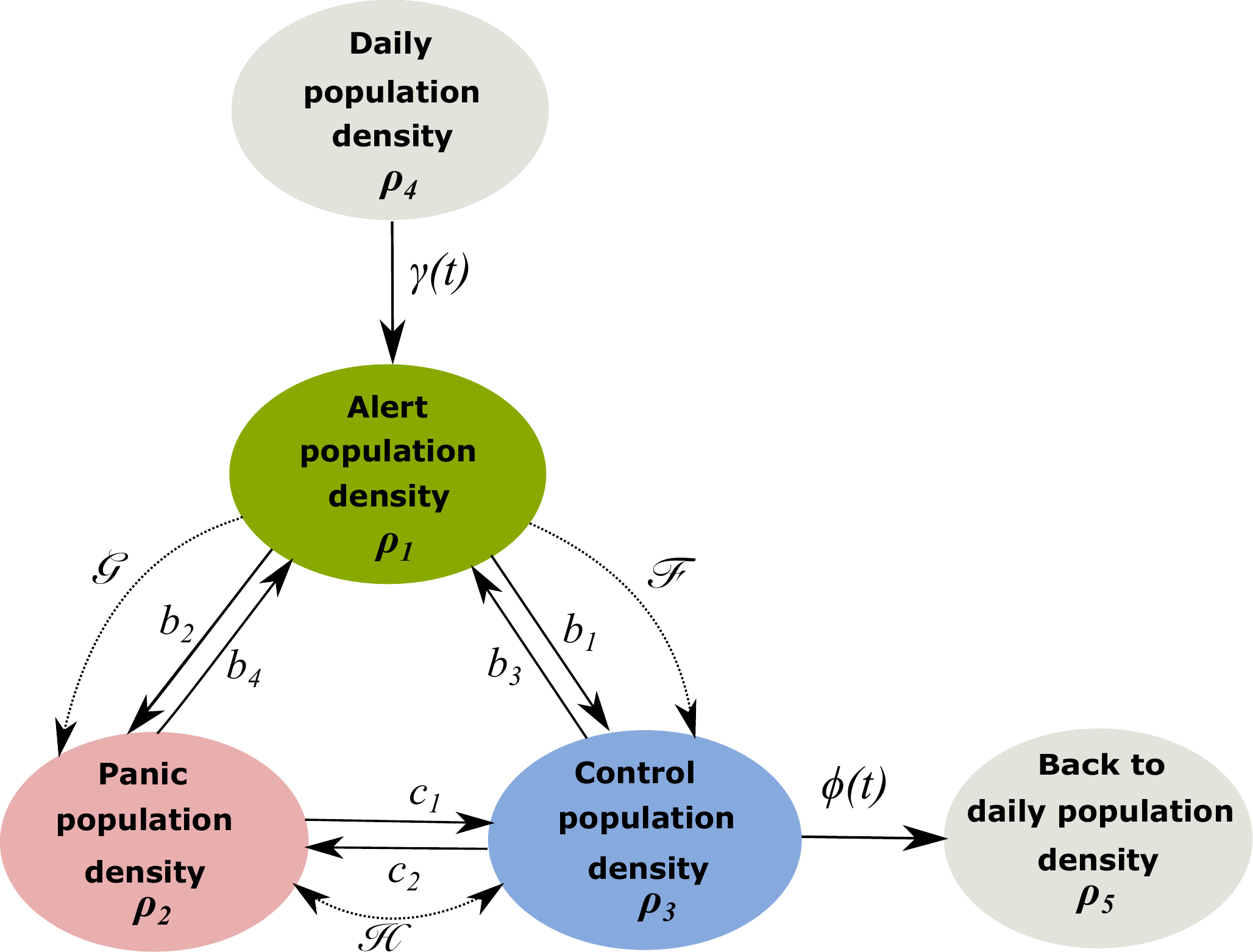}\\
\end{tabular}
\caption{The transfer diagram of the APC model. The arrows indicate the transitions among the compartments.}
\label{APC-Kamal00}
\end{center}
\end{figure}   
%We describe the parameters of the model \eqref{eq:main1System APC2 ODEs} below. 
The detailed description of all the parameters is given in Table~\ref{tab:parameters-systeme-ACP} in the Appendix. In particular, the transitions among the compartments are of two types since they model two fundamental phenomena (see Figure \ref{APC-Kamal00}):
 \begin{itemize}
 \item \textbf{The intrinsic transitions:} They represent the behavioral transitions that depend on the individual properties (past experiences, level of risk culture etc.) They are modeled by linear terms in system \eqref{eq:main1System APC2 ODEs}. The parameters of these transitions are $b_i>0$ for $i=1,\ldots,4$ and $c_j>0$ for $j=1,2$.
  \item \textbf{The imitation phenomenon:} Individuals have a tendency to imitate the behaviors of people around.  Here we follow the dominant behavior principle, that is, in the case of two populations in interaction, the most adopted behavior is the most imitated one. Thus, imitation between two behaviors depend on the ratio of the two populations. Only alert behaviors are not imitable. In system \eqref{eq:main1System APC2 ODEs} the behavioral transitions due to imitation are represented by nonlinear terms defined as:
\begin{align}
\mathcal{F}(\rho_1, \rho_3)&:=\alpha_{13} \xi\left(\frac{\rho_3}{\rho_1+\varepsilon}\right) \rho_1 \rho_3,\label{F} \\
\mathcal{G}(\rho_1, \rho_2)&:=\alpha_{12} \xi\left(\frac{\rho_2}{\rho_1+\varepsilon}\right) \rho_1 \rho_2,\label{G}\\
\mathcal{H}(\rho_2,\rho_3)&:=\left( \alpha_{23}  \xi\left(\frac{\rho_3}{\rho_2+\varepsilon}\right) -\alpha_{32} \xi\left(\frac{\rho_2}{\rho_3+\varepsilon}\right) \right) \rho_2 \rho_3.\label{H} 
 \end{align}
 The parameter $0<\varepsilon\ll 1$ is considered here to avoid  singularities, and the following function
\begin{equation}\label{xiw}
\xi(w):=\dfrac{w^{2}}{1+w^{2}}, \; w \in \mathbb{R}.
\end{equation}
%Fonction $\xi$ (see Figure \ref{Xi function}) 
takes into account the dominant behavior principle, that is the fact that the rate of imitation depends on the ratio of the corresponding populations (see Figure \ref{Xi function}). For example, if we consider the imitation phenomenon from alert to panic, we remark that if $\frac{\rho_2}{\rho_1+\varepsilon} \ll 1$ is small, then $\xi\left(\frac{\rho_2}{\rho_1+\varepsilon}\right)$ is almost equal to zero, so the imitation is weak. Conversely, if $\frac{\rho_2}{\rho_1+\varepsilon}\gg 1$ is large, it means that we have a majority of individuals in panic. In this case, $\xi\left(\frac{\rho_2}{\rho_1+\varepsilon}\right)$ goes to $1$ and alerted individuals would imitate the panic ones. The same situation holds for the other imitation transitions. %The coefficients $ \alpha_{13} $,$ \alpha_{12} $,  $\alpha_{23}$  and $\alpha_{32}$ correspond to the constant rates of the imitation between the populations.
 \end{itemize}
  \begin{figure}
\begin{center}
\begin{tabular}{c}
\includegraphics[width=6cm,height=4cm]{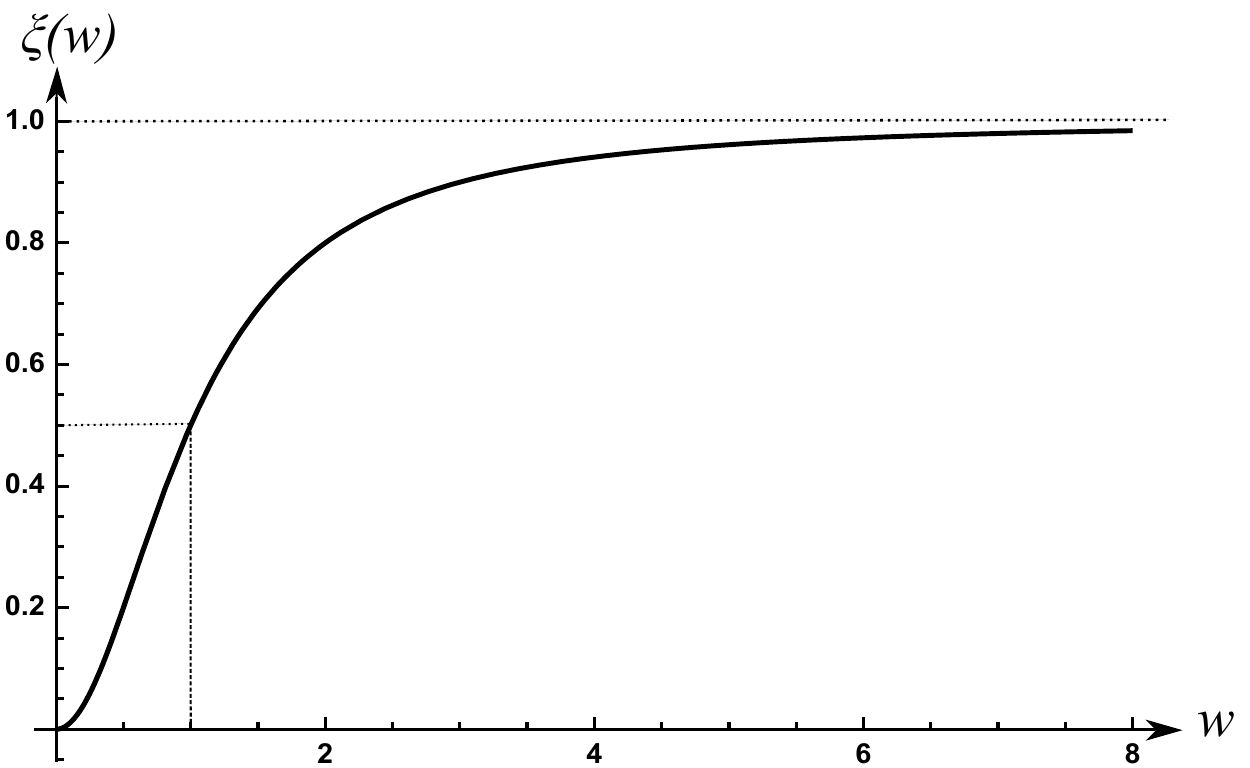}\\
\end{tabular}
\caption{The function $\xi$ involved in the imitation terms: the imitation starts very slowly, then it accelerates before slowing down and saturating.}
\label{Xi function}
\end{center}
\end{figure}

\begin{figure}
\begin{center}
\begin{minipage}[c]{.46\linewidth}
\includegraphics[width=6cm,height=4cm]{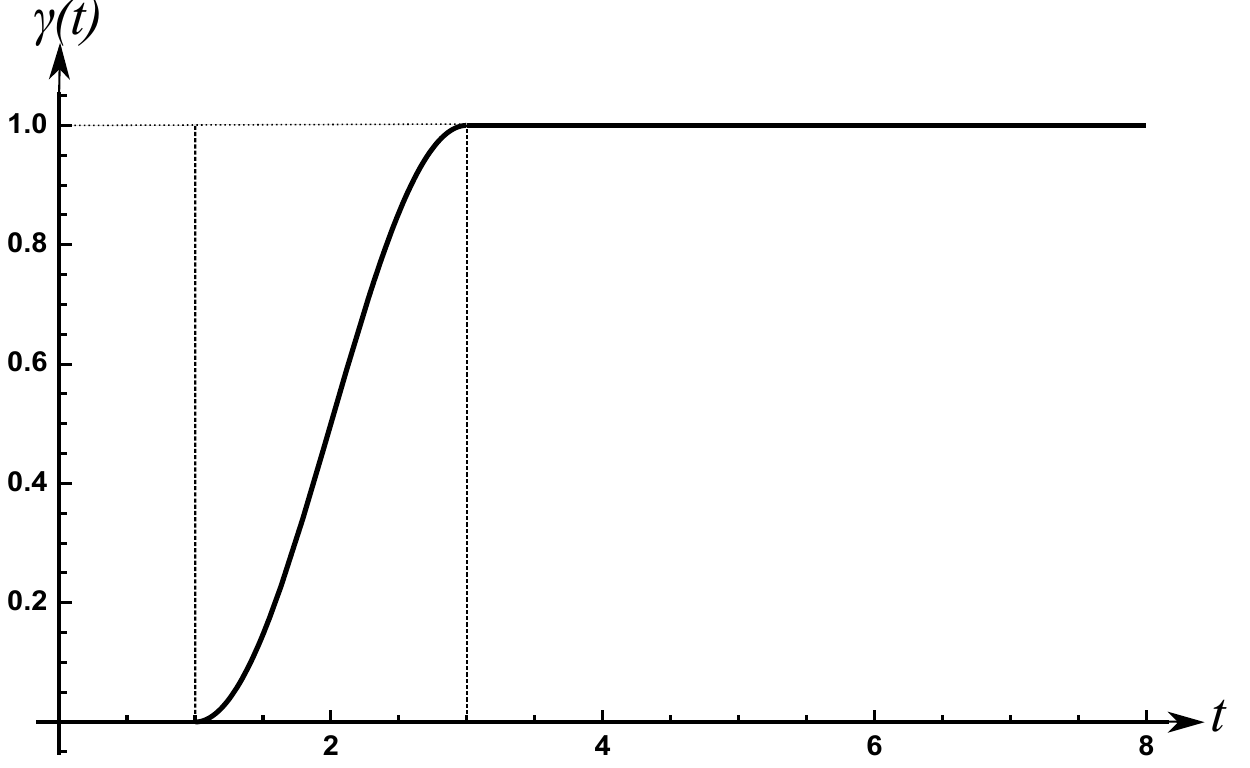}
\end{minipage} \hfill
\begin{minipage}[c]{.46\linewidth}
\includegraphics[width=6cm,height=4cm]{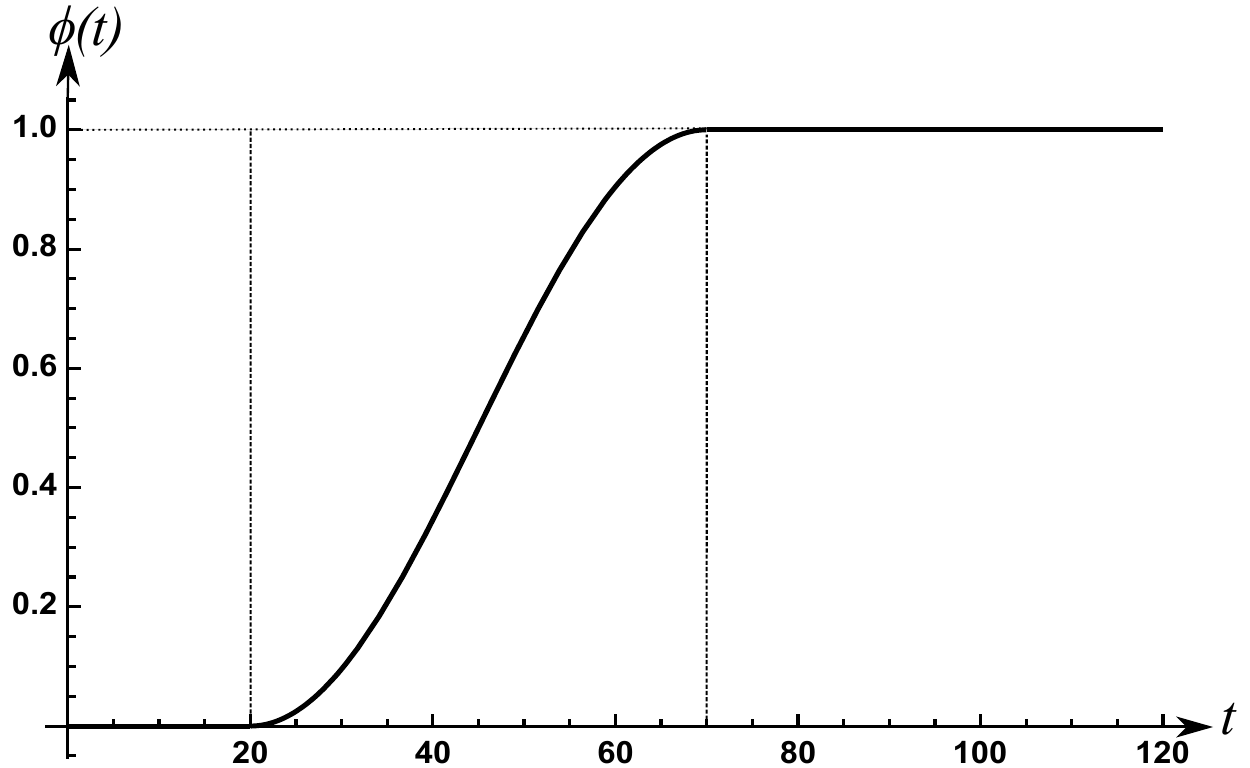}
\end{minipage}
\caption{Example of the functions $\gamma$ and $\phi$, which describe the transition from the daily to the alert behaviors, and from the control to everyday life behaviors: $\gamma(t)=\zeta(t,1,3)$ and $\phi(t)=\zeta(t,20,70)$ respectively.}
\label{Gamma and Phi functions}
\end{center}
\end{figure} 

 Finally, function  $\gamma$ describes the transition from the daily to the alert behavior at the beginning of the catastrophic event, while function $\phi $ represents the transition from a control behavior to an everyday life behavior at the end of the event. It is assumed that they are time-dependent functions that depend on the nature of the disaster. In \cite{Lanza_et_al.} the authors consider the functions $ \phi, \ \gamma: [0,\infty)\longrightarrow [0,1] $  defined as 
\begin{equation}\label{phi} 
\phi(t):=\zeta(t,\tau_0,\tau_1)\quad\text{for}\quad \tau_0 < \tau_1, 
\end{equation}
and 
\begin{equation}\label{gamma} 
 \gamma(t):=\zeta(t,\sigma_0,\sigma_1)\quad\text{for}\quad \sigma_0 < \sigma_1, 
\end{equation}
where 
\begin{equation}\label{z}
 \zeta(t,z_0,z_1) :=\left\{
 \begin{aligned}
 &0 & \quad \text{ if } t<z_0 , \\
 & \dfrac{1}{2}-\dfrac{1}{2}\cos\left(\frac{t-z_0}{z_1-z_0}\pi\right) & \quad \text{ if } z_0\leq  t \leq z_1,   \\
 & 1  & \quad \text{ elswhere}.    \\
  \end{aligned}
    \right.
\end{equation}
Here $ \tau_0 $ is the time at which the daily population starts to be impacted by the event, and $ \tau_1 $ is the time at which the total daily population becomes alert. Additionally, $ \sigma_0 $ represents the time at which the first individuals in a control state can go back to a pseudo-daily behavior, while~$ \sigma_1 $ is the time where this transition is highest. See Figure \ref{Gamma and Phi functions} for an example of these two functions.\\

Summing up the equations of \eqref{eq:main1System APC2 ODEs}, we notice that 
\begin{equation}
\sum_{i=1}^6 \rho_i(t)=1 \text{ (the total population density)}, \; \forall t\geq 0.  \label{Total density}
\end{equation} 
For this reason we only need to solve the system \eqref{eq:main1System APC2 ODEs} without considering the victims density~$\rho_6$, since the last equation is a linear combination of the others.

\subsection{First-order macroscopic pedestrians models} \label{Section1}
In this section we give the mathematical framework about first-order macroscopic pedestrians models \cite{Coscia,Hughes2002}. We recall that the continuity principle indicates that the variation of the density $\rho$ of a certain quantity, in $\Omega \subset \mathbb{R}^2$, is given by the balance of the flow $q$ 
 of this quantity across the boundary $\partial \Omega$ and the amount of quantity produced or removed inside the domain. Mathematically, this can be expressed as follows:
\begin{equation}
 \partial_t \rho+ \nabla \cdot q=S(t,x,\rho), \quad  t\geq 0, \ x\in \Omega, \label{NC Continuity Equation}
\end{equation}
where $S$ is the source term. To be more precise, the flux $q$ can be advective $(q_{adv})$, proportional to a velocity $\vec{v}(\rho)$ where $\rho$ is the transported scalar quantity, which means that $q_{adv}=\rho\vec{v}(\rho)$; but it may also be diffusive ($q_{diff}$), that is, it consists of a diffusion term $q_{diff}= -d \nabla \rho $ corresponding to the transportation of the scalar quantity according to its gradient, where $d$ is the constant diffusion rate. Thus, we have 
$$
q=q_{adv}+q_{diff}:=-d \nabla \rho+
\rho\vec{v}(\rho) .
$$ 
Moreover, the source term $S$ can be divided into a pure and a reaction source terms: 
 $$S=S_p+S_r.$$
The pure source term denoted by $S_p$ represents the self-creation/destruction rate inside the domain (using population dynamics terminology, it corresponds to the birth and death terms for example). This term will be denoted by $S_p (t,x,\rho)=g(t,x,\rho) $ where $g$ is a given linear function with respect to $\rho$. The reaction term $S_r$ describes the creation/destruction processes as a reaction to this quantity (corresponding to the reaction and interaction terms). This term will be denoted by $S_r(t,x,\rho)=f(t,x,\rho)$, where $f$ is a given nonlinear function with respect to $\rho$. Moreover, the associated boundary conditions are expressed as follows 
$$ 
q\cdot n =q^0 \cdot n,
$$
where $n$ is the outward unit normal vector to $\partial\Omega$.

These boundary conditions mean that the flux crossing the boundary part $\partial \Omega $ in the direction of the normal vector $n$ is given by an observed flux $q_0$ in the same direction $n$. According to that, the complete  first-order equation \eqref{NC Continuity Equation} in its non-conservative form is given by:
\begin{equation}\label{eq:mainSystem FirstOr}
            \left\{
                 \begin{array}{l l l l}
                  \partial_t \rho &=d\Delta \rho - \nabla \cdot (\rho\vec{v}(\rho) ) +g(t,x,\rho)+f(t,x,\rho), &\quad t\geq 0, \text{ in } \Omega ,\\
                    q\cdot n &=q^0 \cdot n,  &\quad  t\geq 0, \text{ on } \partial \Omega ,\\
                   \rho (0)&=\rho_0,      &\quad \text{ in } \Omega ,            \end{array}\right.
      \end{equation}  
where $\rho_0$ is the initial condition. For more details we refer to \cite{Batc} and references therein. 
 
\subsection{The spatio-temporal model corresponding to \eqref{eq:main1System APC2 ODEs}}\label{ Section21 Spatio-temporal model}
In this section, we present our advection-diffusion APC (Alert-Panic-Control) model using the first-order continuity equations \eqref{eq:mainSystem FirstOr} presented in Section \ref{Section1}. Notice that system \eqref{eq:mainSystem FirstOr} can be generalized to the case where several populations are in interaction (as is the case for the system \eqref{eq:mainSystem Intro2}, where each population can for example have different directions of movement). Moreover, our new model takes into account the transitions and the imitations characteristics which are not considered in the conservative system \eqref{eq:mainSystem Intro2}.\\

Let $\Omega\subset\mathbb{R}^2$ be a non-empty bounded domain with Lipschitz boundary. Consider the local population densities $\rho_{i} : [0,+\infty) \times \Omega \longrightarrow \mathbb{R}$ for $i=1,\ldots,5$ where
\begin{itemize}
\item $\rho_1(t,x)$ is the local density of individuals in the alert situation,
\item $\rho_2(t,x)$ is the local density of individuals in the panic situation, 
\item $\rho_3(t,x)$ is the local density of individuals in the control situation, 
\item $\rho_4(t,x)$ is the local density of individuals in the daily behavior before the disaster,
\item $\rho_5(t,x)$ is the local density of individuals corresponding to the daily situation after the disaster, 
\end{itemize}
and let $\rho$ be given by
\[
 \rho:=(\rho_1,\rho_2 ,\rho_3,\rho_4,\rho_5 )^{*}.
\] 
As for the model \eqref{eq:mainSystem FirstOr} we will consider an advective flux and a diffusive flux. The advection phenomenon models the movement of a population in a chosen direction, typically to escape the $\Omega$ domain. It is therefore natural to incorporate advection terms in our case.

We set 
$$
q_{i,adv}:=\rho_i\vec{v}_i(\rho), 
$$
the advective flux, where
$\rho\vec{v}_i(\rho)$, $i=1,\ldots,5$ is the corresponding velocity for each population of density $\rho_i$, $i=1,\ldots,5$. 
The alert population corresponds to the set of behaviors such as information seeking and hazard identification, and therefore here it is assumed that it cannot undergo the advection phenomenon.
Thus, it is assumed that only the populations in a situation of panic or control are concerned by the advection phenomenon, hence
\[
\vec{v}_i(\rho) =0,\quad \text{for }i=1,4,5.
\]
For $i=2,3$, we assume that the velocities $\vec{v}_i$ satisfy
\[
\vec{v}_i(\rho) =V_i(\rho)\,\vec{\nu},\quad \text{for }i=2,3,
\]
where the vector $\vec{\nu}:\overline{\Omega}\to \mathbb{R}^2$  that represents the direction of the movement, and satisfies certain conditions that will specified later. Moreover, $V_2,V_3$ are the scalar speed-density functions. Several different type of speed-density functions are used in the literature (see \cite{Buchmueller,Coscia,CristianiPiccoliTosin,Seyfried2005}). Here, we  choose a linear dependence:
$$ 
V_2 (\rho ) = V_{2,\max}\left( 1- \tilde{\rho}\right)\quad\text{and}\quad V_3 (\rho)=V_{3,\max} \left( 1- \tilde{\rho}\right),   
$$
where $V_{2,\max},V_{3,\max}$ are two positive constants and 
\[
\tilde{\rho} :=\sum_{i=1}^{5}\rho_i. 
\]
Similar assumptions on the panic and the control maximum speeds are used in \cite{CristianiPiccoliTosin,Liu-Zh-Hu}.
\begin{remark}
It should be pointed out that the examples of speed-density relationships discussed in this article (and in the references \cite{Buchmueller,Coscia,CristianiPiccoliTosin,Seyfried2005} cited above) are provided only by empirical data (experiments with certain restricted pedestrian flows) which are valid under a steady condition. To develop more sophisticated and complex density-speed relationships, and to get more information even just on the maximum speeds $V_{i, max}$, several additional factors should be taken into account \cite{Hermant}, such as: psychological and social forces, individual differences, the social and the age composition of the crowd, the spatial configuration, etc.  
\end{remark}

Moreover, each population diffuses in the spatial domain $\Omega$ according to the density gradient
$$
q_{i,diff}:=-d_i\nabla \rho_i,
$$
 with constant diffusion coefficients $d_i$ for $i=1,\ldots,5$. It is assumed that the whole crowd diffuses with different diffusion coefficients depending on the type of behavior.

% For instance, the crowd in a alert state hardly diffuses, since in a alert behavior, pedestrians are moving to look for information and to identify the hazard. Thus, the diffusion coefficient $d_1$ should be considered small compared to the ones of the other populations. Here it is assumed that the most diffusive population is the panic population, since in panic behavior pedestrians move randomly in different directions, and thus $d_2$ should be considered the largest diffusion coefficient. 

%Thus, for the diffusivity coefficients of the control, daily, and return-to-day behaviors, we assume $d_1<d_i <d_2$ for $i=3,4,5$.

With these assumptions and notations, the associated fluxes are given by
\[
q_i:=-d_i\nabla \rho_i+\rho_i \vec{v}_i(\rho), \quad \text{for }i=1,\ldots,5.
\]

The source (pure and reaction) terms correspond to the intrinsic transitions and the imitation terms described in Section \ref{Section0}.

Moreover, we divide the boundary $\partial\Omega$ of $\Omega$ in two parts, each of them corresponding to different boundary conditions:
\[
\partial \Omega := \Gamma_1 \cup \Gamma_2\quad\text{with}\quad \Gamma_1\cap\Gamma_2=\emptyset.
\]
Here $\Gamma_{1}$ corresponds to the part of boundary that could not be crossed by the population, while~$\Gamma_2$ corresponds to an exit. We define the observed fluxes $q^0_i$ on the boundary $\partial\Omega$ by
\[
q^0_i(\rho_i):=\begin{cases}
0&\text{on }\Gamma_1\\
-\rho_i v_{i,\text{out}}\,\vec{\nu}&\text{on }\Gamma_2
\end{cases}
\] where $v_{i,\text{out}}\geq0$ is the constant speed at the boundary $\Gamma_2$ and $\vec{\nu} $ is the direction of the movement. This means that each population cannot cross along $\Gamma_1$ and cross the escape $\Gamma_2$ with speed $v_{i,\text{out}}$. We assume that the function $\vec{\nu}$ satisfies:
\begin{equation}
\vec{\nu}(x)=\begin{cases}
(0,0)^*,& x\in \Gamma_1\\
(\nu_{x_1}(x),\nu_{x_2}(x))^{*},& x\in \Omega\\
n(x),& x\in\Gamma_2 \label{Desired direction}
\end{cases}
\end{equation}
where $n$ is the unit normal vector at the boundary part $\Gamma_2$. This choice of vector $ \vec{\nu}$ means that, at the part of the boundary $\Gamma_1$ where pedestrians cannot cross, the direction of movement vanishes, but at the target exit $\Gamma_2$, the pedestrians cross this part of the boundary in a direction parallel to the normal vector $n$, while the desired direction of motion inside the domain $\Omega$ is given in a suitable way that depends on the regularity of $\Omega$, and it satisfies the following assumption:
$$ \vec{\nu}_{| \Omega} \in W^{1,\infty}(\Omega,\mathbb{R}^2),$$
such that $\nabla \cdot \vec{\nu}(x) \leq 0$ for all $ x\in \Omega$. For example, we can take $\vec{\nu}_{| \Omega}$ to be normalized vectors between any point $ x\in \Omega $ and a centered (fixed) target point that lies outside $\overline{\Omega}$, see \eqref{Direction of the mouvment nu}.
Thus, the observed fluxes on the boundary $\partial\Omega$ are given by
\[
q^0_i(\rho_i)=\rho_i v_{i,\text{out}}\,\vec{\nu}.
\]
We assume that at the beginning $t=0$, the whole population is in a daily behavior, so we consider the following initial conditions: $\rho(t=0,0)=\rho_0$ where $\rho_0$ is given by
\begin{equation}\label{defrho0}
\rho_0:=(0,0,0,\theta,0)^*\quad\text{on }\Omega,
\end{equation}
 and $\theta:\Omega\to [0,\infty)$ is such that, the integral exists, and that
\[
\int_{\Omega}\theta(x) dx=1.
\]
Thus, from \eqref{eq:main1System APC2 ODEs} and \eqref{eq:mainSystem FirstOr}, we obtain the following system:
\begin{equation}\label{eq:main1System APC21}
            \left\{
    \begin{array}{l l l }
     \partial_t \rho_1 =& d_1 \Delta \rho_1   - (b_1+b_2+\delta_1) \rho_1 + \gamma(t) \rho_4+b_3 \rho_3 +b_4  \rho_2  & \\ & -\mathcal{F}(\rho_1, \rho_3) - \mathcal{G}(\rho_1, \rho_2) \; & \text{ in } \Omega ,\  t \geq 0,\\
     \partial_t \rho_2 = &d_2 \Delta \rho_2  -(b_4 + c_1 + \delta_2) \rho_2+ b_2 \rho_1 +c_2 \rho_3 & \\ & - \nabla \cdot  (\rho_2 \vec{v}_2(\rho))  +\mathcal{G}(\rho_1, \rho_2)  - \mathcal{H}(\rho_2,\rho_3)  \; & \text{ in } \Omega ,\ t\geq 0,\\
    \partial_t \rho_3 =&  d_3 \Delta \rho_3 -(b_3 +c_2+\delta_3)\rho_3  +b_1 \rho_1 +c_1 \rho_2   \\ 
         &-\phi(t)\rho_3 -\nabla \cdot (\rho_3 \vec{v}_{3}(\rho)) + \mathcal{F}(\rho_1,\rho_3)+ \mathcal{H}(\rho_2,\rho_3) \; & \text{ in } \Omega , \ t  \geq 0,\\
        \partial_t \rho_4 =& d_4 \Delta \rho_4  - \gamma(t) \rho_4  \; & \text{ in } \Omega ,\ t \geq 0\\
                      \partial_t \rho_5 =& d_5 \Delta \rho_5 +\phi(t) \rho_3 \; & \text{ in } \Omega , \ t \geq 0. \\
                 \end{array}\right.
      \end{equation} 
			with the boundary conditions
\begin{equation}\label{boundary conditions}
d_i\nabla\rho_i\cdot n =\left(\rho_i\vec{v}_i(\rho)-
\rho_i v_{i,\text{out}}\vec{\nu}\right)\cdot n \quad\text{on }\partial\Omega,\ t\geq 0,\quad i=1,\ldots5,
\end{equation}
or more explicitly, using the definition of $\vec{\nu}$: 
 \begin{equation*} \label{suitable boundary conditions 2}
 \left\{
    \begin{aligned}
    d_i \nabla \rho_{i}\cdot n &=  0   & \quad & \text{ on } \Gamma_1 \; \text{ for } i=1,\cdots, 5,\\
    d_i \nabla \rho_{i}\cdot n &= \rho_i V_{i}(\rho)- \rho_i v_{i,out}   & \quad &\text{ on } \Gamma_{2} \; \text{ for } i=1,\cdots, 5,\\
      \end{aligned}
    \right.
\end{equation*} 
and the initial condition
\begin{equation} \label{initial condition}
\rho(t=0,\cdot)=\rho_0\quad\text{in }\Omega.
\end{equation}

%%%%%%%%%%%%%%%%%%%%%%%%%%%%%%%%%%%%%%%%%%%%%%%%%%%%%%%%%%%%%%%%%%%%%%%%%%%%%%%
\section{Local existence, positivity and  $L^1$--boundedness  of the spatio-temporal model \eqref{eq:main1System APC21}-\eqref{initial condition}}\label{sec:3} 
In this section, we prove the well-posedness of the spatio-temporal APC model \eqref{eq:main1System APC21}-\eqref{initial condition} introduced previously in Section \ref{sec:2}. Then, we establish the positivity of the solutions and the $L^1$-boundedness of the population densities. 
\subsection{The abstract formulation and the associated boundary value Cauchy problem}   
To study the existence and uniqueness of solutions to the system spatio-temporal APC model \eqref{eq:main1System APC21}-\eqref{initial condition}  we use the abstract formulation and semigroup theory \cite{Nagel,Pazy}. In order to do that, for $ p>2 $, we define the Banach space $ X:= L^p(\Omega)^{5} $, the product of the Lebesgue spaces of order $p$,  equipped with the following norm $$ \| \varphi:=(\varphi_1,\cdots,\varphi_5)^{*}\|:=\sum_{i=1}^{5} \| \varphi_i \|,$$ where $\|\cdot \|$ is the usual norm in $L^p(\Omega)$, and $^*$ designate the vector transpose. It is clear that $X$ is a Banach lattice, that is, $$ | \varphi_i(x)| \leq  | \psi_i(x) |  \text{  for } a.e. \, x\in \Omega \text{ for all } i=1,\cdots,5   \text{  implies that  }  \|\varphi \|\leq \|\psi \|.$$ Moreover, we define the linear closed operator $  (\mathcal{A},D(\mathcal{A} )) $ on $X$ by
\begin{equation}
 \left\{
    \begin{aligned}
     \mathcal{A} 
     &=
 diag( d_{1}  \Delta,\cdots,d_{5}  \Delta)\\   
     D(\mathcal{A} )& = W^{2,p}(\Omega)^{5}.
    \end{aligned} \label{EqApp1Chapter1}
  \right. 
\end{equation} 

%$$  D:=\otimes_{i=1}^{5} C^{2}(\overline{\Omega}) $$ with $ \partial_n  $ is the normal derivative in the direction with respect to the normal vector $n$ on the boundary which is defined as $\partial_{n} \varphi := \nabla \varphi \cdot n$. 
%where $D:= \lbrace \varphi \in H^{1}(\Omega)\cap C(\overline{\Omega}): \, \Delta \varphi \text{ exists in } C(\overline{\Omega})  \text{ and } \partial_{n} \varphi _{| \Gamma}=\partial_{n} \varphi _{| \Gamma_{2}} +v_{0} \varphi_{| \Gamma_{2}}=0    \rbrace $.
%s the maximal domain (in the sense without any boundary conditions) of $\Delta$ in $C(\overline{\Omega})$.  
%Let the bounded linear perturbation $\mathcal{B}:X\longrightarrow X$ given by
%\begin{equation}
%\mathcal{B}:= \begin{pmatrix}
% - (b_1+b_2+\delta_1)    & 0 & 0 & 0 & 0 \\
% 0 &   -(b_4 + c_1 + \delta_2) & 0 & 0 & 0 \\
%0 &  0 & -(b_3 +c_2+\delta_3) & 0 & 0  \\
%0 &  0 & 0 & 0  & 0 &   \\
%0 &  0 & 0& 0 & 0 \\
%  \end{pmatrix}.
%\end{equation}
Let the Banach space $Z:=D(\mathcal{A})$ equipped with its usual norm. The nonlinear function $\mathcal{K}:[0,\infty)\times X_{\alpha}\longrightarrow X$ is defined by 

\begin{align}
\mathcal{K}(t,\varphi )& = \begin{pmatrix}
\mathcal{K}_{1}(t,\varphi_1, \nabla \varphi)\\
\mathcal{K}_{2}(t,\varphi_2, \nabla \varphi)\\
\mathcal{K}_{3}(t,\varphi_3, \nabla \varphi)\\
\mathcal{K}_{4}(t,\varphi_4, \nabla \varphi)\\
\mathcal{K}_{5}(t,\varphi_5, \nabla \varphi)\end{pmatrix}\nonumber \\
&=\begin{pmatrix}
 - (b_1+b_2+\delta_1)\varphi_1+ \gamma(t) \varphi_4+b_3 \varphi_3 +b_4  \varphi_2   -\mathcal{F}(\varphi_1, \varphi_3) - \mathcal{G}(\varphi_1, \varphi_2)\\
 -(b_4 + c_1 + \delta_2)\varphi_2+  b_2 \varphi_1 +c_2 \varphi_3  - \nabla \cdot  (\varphi_2 V_2(\varphi)\nu)  +\mathcal{G}(\varphi_1, \varphi_2)  - \mathcal{H}(\varphi_2,\varphi_3) \\
 -(b_3 +c_2+\delta_3)\varphi_3+b_1 \varphi_1 +c_1 \varphi_2 -\phi(t)\varphi_3 - \nabla \cdot (\varphi_3 V_{3}(\varphi)\nu) + \mathcal{F}(\varphi_1,\varphi_3)+ \mathcal{H}(\varphi_2,\varphi_3) \\
 - \gamma(t) \varphi_4   \\
\phi(t) \varphi_3 
  \end{pmatrix}, \label{The nonlinear term}
\end{align}

and $X_{\alpha}:=\lbrace \varphi \in W^{2\alpha,p}(\Omega)^{5}: d_i \partial_{n}  \varphi_{i|\partial \Omega} =0  \rbrace$ for some (fixed) $\alpha \in (1/p+1/2,1)$ equipped with the norm $ \|\cdot\|_{0,\alpha}:= \|\cdot\|+\|\nabla \cdot\|+[\cdot]_{\zeta} $ where $$  [\varphi]_{\zeta}:= \left( \int_{\Omega \times \Omega} \dfrac{|\varphi(x)-\varphi(y) |^p}{|x-y |^{2+p\zeta}} dx dy\right)^{1/p}, \quad \zeta=2\alpha -1.$$ Hence, $\| \varphi\|_{\alpha}:=\sum_{i=1}^{5} \| \varphi_i \|_{0,\alpha} $ defines a norm on $X_{\alpha}$ which make it a Banach space. Then from the Sobolev embedding, we have
$$ X_{\alpha}\hookrightarrow  C^1(\overline{\Omega})^{5}. $$

Moreover, let us define the boundary space $\partial X:=W^{1-1/p,p}( \partial \Omega )^{5}$ which is equipped with the norm $$\|\varphi \|_{\partial X}:= \sum_{i=1}^{5} |\varphi_i |_{p}$$ where $$ | \varphi |_{p}=\left( \int_{\partial\Omega} |\varphi(x)|^p d\sigma(x) + \int_{\partial\Omega \times \partial\Omega} \dfrac{|\varphi(x)-\varphi(y) |^p}{|x-y |^{p}} d\sigma(x) d\sigma(y)\right)^{1/p}  ,$$
where $\sigma $ is a (surface) measure in $\partial \Omega$. For more details about the fractional Sobolev spaces on $\Omega$ and its boundary $\partial\Omega$ respectively, we refer to the monographs of Adams  \cite{Adams} and Brezis \cite{Brezis}. Since $ 1-2/p >0 $, we obtain the continuous embedding $ \partial X \hookrightarrow C(\partial \Omega )^{5}$.
Moreover, we define the boundary operator $\mathcal{L}: Z \longrightarrow \partial X $  by
\begin{equation}
 \mathcal{L}\varphi= \left(d_1\partial_{n}   \varphi_1, \cdots, d_5 \partial_{n} \varphi_5  \right)^* \quad \text{ on } \partial \Omega.
 \end{equation}
%and $\mathcal{L}_{2}:= X \longrightarrow \partial X_{2}$  is such that \begin{equation}
%\mathcal{L}_{2}:= \begin{pmatrix}
% L_{n} \cdot    & 0 & 0 & 0 & 0 \\
% 0 &   L_{n} \cdot & 0 & 0 & 0 \\
%0 &  0 & L_{n} \cdot & 0 & 0  \\
%0 &  0 & 0 & L_{n} \cdot  & 0 &   \\
%0 &  0 & 0& 0 & L_{n} \cdot \\
%  \end{pmatrix}.
%\end{equation}
The nonlinear boundary term $ \mathcal{M} : X_{\alpha} \longrightarrow\partial X$ is given by 

\begin{equation}
 \mathcal{M}\varphi = \left\{
    \begin{aligned}
     & (0,0,0,0,0)^* \; & \text{ on } \Gamma_1 , \\
 & 
(-v_{1,out} \varphi_1,  
 -v_{2,out} \varphi_2+  \varphi_2 V_2(\varphi ),
 -v_{3,out} \varphi_3+ \varphi_3 V_3(\varphi ),
-v_{4,out} \varphi_4,
-v_{5,out} \varphi_5)^*
  \; & \text{ on } \Gamma_2. \\
  \end{aligned} 
  \right. 
\end{equation}
To check that the operator $\mathcal{M}$ is well defined, we use the trace theorem, see \cite[Chapter 9]{Brezis}. Indeed, let $\varphi \in X_{\alpha}$, then, in particular, for each $i=1,\cdots,5$,  $\varphi_i \in W^{1,p}(\Omega)$. So that, using the trace theorem, we have, $$\varphi_{i |\partial \Omega} \in W^{1,1-1/p}(\partial \Omega).$$ Moreover, since $$ \varphi_j V_j(\varphi )=V_{j,max}\varphi_j (1-\sum_{i=1}^{5} \varphi_i )\quad  \text{ for } j=1,2, $$  
and using the embedding $ W^{2\alpha,p}(\Omega)\ \hookrightarrow  C^1(\overline{\Omega})$, we obtain that 
$$\varphi_j V_j(\varphi ) \in C^1(\overline{\Omega}) \subset  W^{1,p}(\Omega) \quad  \text{ for } j=1,2. $$
Hence, by trace theorem, we obtain that 
$$ (\varphi_j V_j(\varphi ))_{|\partial \Omega} \in W^{1,1-1/p}(\partial \Omega) \text{ for } j=1,2.  $$
Thus, $\mathcal{M}\varphi \in \partial
X.$
\\

Otherwise, the continuous embedding $ Z\hookrightarrow X_{\alpha}$ holds which yields that the linear operator $ \mathcal{A} : Z \longrightarrow X$ is bounded and $\mathcal{L}: Z \longrightarrow \partial X$ is bounded and surjective (see \cite{Amann,Amann2}). 
The initial conditions are given by the following vector
\begin{equation}
\rho_0=(0,0,0,\theta,0)^*.
\end{equation}
where $\theta $ is defined above. Now, under the above considerations, we can write our model \eqref{eq:main1System APC21}-\eqref{initial condition} as the following abstract boundary evolution system:

\begin{equation}\label{eq:main1System APC21 boundary}
            \left\{
                 \begin{aligned}
               u_t (t)  = & \mathcal{A}   u(t) + \mathcal{K}(t,u (t)),& \quad
                t\geq 0, \\
                   \mathcal{L} u (t)= & \mathcal{M}(u(t)),& \quad
                t \geq 0,\\
                   u(0) =&u_0, & \\
                 \end{aligned}
                 \right.
\end{equation} 
where \begin{equation*}
u(t):= (
  \rho_1(t,\cdot),
 \rho_2(t,\cdot),
\rho_3(t,\cdot),
\rho_4(t,\cdot),
\rho_5(t,\cdot))^*, \quad t\geq 0,
\end{equation*}
and $$u_0=\rho_0 .$$
Let $T>0$, $ u_0 \in Z$ and for each $v\in X_\alpha$, $ \mathcal{K}(\cdot,v) \in C([0,T],X) $. By a (classical) solution of equation \eqref{eq:main1System APC21 boundary} we mean any function $u\in C([0,T],Z)\cap C^{1}([0,T],X_{\alpha})$ such that $u$ pointwisely satisfies \eqref{eq:main1System APC21 boundary} in $[0,T]$.\\
 
In order to study the boundary evolution equation \eqref{eq:main1System APC21}, we proceed as in H. Amann \cite{Amann2}, G. Greiner \cite{Greiner} and later W. Desh et al. \cite{Desh} (see \cite[Section 12]{Amann2} and also \cite[Section 4]{Desh}), namely, the nonlinear boundary evolution problem \eqref{eq:main1System APC21} admits a (classical) solution, if and only if, the following semilinear Cauchy problem admits a (classical) solution
\begin{equation}\label{eq:main1System APC211}
            \left\{
                 \begin{aligned}
                u_t (t)  = & \mathcal{A}_{\beta-1}   u(t) + \tilde{\mathcal{K}}(t,u (t)),& \quad
                t\geq 0, \\
                   u(0) =&u_0, & \\
                 \end{aligned}
                 \right.
\end{equation}
in a extrapolated Banach space of $X$ denoted here by $ X_{\beta-1} $ for some $ 0<\beta<1 $ (see Section \ref{Preliminary results} for the definition of such spaces) where $\tilde{\mathcal{K}}:=\mathcal{K}+ (\lambda-\mathcal{A}_{\beta-1})  \mathcal{D} \mathcal{M}$ with $\mathcal{D}$ is the Dirichlet map associated to the operator $(\lambda -\mathcal{A})$, which means that $v = \mathcal{D} w$ is the unique solution of the abstract boundary value problem
\begin{equation}
            \left\{
                 \begin{aligned}
                  (\lambda -\mathcal{A})v =0 & \\
                   \mathcal{L}v =w &
                 \end{aligned}
                 \right.
\end{equation}
for each $w\in \partial X$ for some $\lambda \in \varrho(\mathcal{A})$.
% In fact, let $u\in X$ and $w\in \partial X$. 
%Then, the equation
%\begin{equation} \label{Dirichlet operator equation}
%            \left\{
%                 \begin{aligned}
%                  (\lambda -\mathcal{A})v =u & \\
%                   \mathcal{L}v =w &
%                 \end{aligned}
%                 \right.
%\end{equation}
%admits the solution $v = R(\lambda,\mathcal{A})u + \mathcal{D}w$. This solution is unique in $Z$ since
%$\lambda - \mathcal{A}$ is injective on $D(\mathcal{A}_{0}) :=\ker (\mathcal{L}) $.

For $T>0$, $ u_0 \in X_\beta$ and for each $v\in X_\alpha$, $ \tilde{\mathcal{K}}(\cdot,v) \in C([0,T],X_{\beta-1}) $, we recall that a (classical) solution to a semilinear evolution equation of the form \eqref{eq:main1System APC211}, is the unique function $u\in C([0,T],X_\beta)\cap C^1([0,T],X_{\beta-1})$ that satisfies \eqref{eq:main1System APC211} pointwisely in $[0,T]$.

This approach of studying the boundary evolution equation \eqref{eq:main1System APC21 boundary} by equivalently studying the Cauchy problem \eqref{eq:main1System APC211} was first introduced separately in \cite{Amann2,Greiner}, and was later perfected in \cite{Desh} for a control point of view (see also the references therein). The conditions under which this approach is used are cited in the references \cite{Amann2,Desh,Greiner} and are listed below:\\
\textbf{(C1)} There exists a new norm $ \mid \cdot \mid_{m} $ which is finer than the norm of $X$, such that the space $Z:= (D(\mathcal{A}), \mid \cdot \mid_{m }) $ is complete. That is, $ Z $ is continuously embedded in $ X $ and  $ \mathcal{A} \in L(Z, X). $ \\
\textbf{(C2)} The restriction operator $ \mathcal{A}_0= \mathcal{A}_{| ker(\mathcal{L})} $ generates a strongly continuous analytic semigroup. \\
\textbf{(C3)} The operator $ \mathcal{L}: Z\longrightarrow \partial X $ is bounded and surjective. \\
\textbf{(C4)} $Z $ is continuously embedded in $ X_{\alpha} $. That is, $ Z\hookrightarrow X_{\alpha} $ for some $ 0 < \alpha <1 . $ \\ 
\textbf{(C5)} The functions $ \mathcal{K}: [0,+\infty)\times X_{\alpha} \longrightarrow X $ and $\mathcal{M}: [0,+\infty)\times X_{\alpha} \longrightarrow \partial X $ are locally integrable in the first variable and continuous with respect to the second one. \\
We mention that the conditions \textbf{(C1)}-\textbf{(C5)} are all satisfied in this work. Before presenting our main results, we would like to mention a few necessary preliminary results.
\subsection{Preliminary results}\label{Preliminary results}
In this section, we give our preliminary theoretical results to study the local existence, uniqueness and positivity of the solutions of our equation \eqref{eq:main1System APC21 boundary}. For the sake of order, the proofs of the results in this section are given in Appendix \ref{Annexe:A}. In what follows, we introduce $ \mathcal{A}_0:=\mathcal{A}_{| \ker (\mathcal{L})} $ and $X$ is the Banach lattice introduced above.
\begin{definition}
\textbf{(i)} A vector $\varphi=(\varphi_1,\cdots,\varphi_5)^{*} \in X$ is said to be positive, i.e., $\varphi(x) \geq 0$, if and only if, $ \varphi_i (x) \geq 0 $ for $ a.e. \, x\in \Omega $ for all $ i=1,\cdots,5 $. So that, $ X^{+}$ denotes the positive cone of $X$.\\
\textbf{(ii)} A bounded operator $\mathcal{T}$, in the Banach lattice $X$, is said to be positive if and only if, for every $ \varphi  \in X $, $ \varphi(x) \geq 0 $ implies $ \mathcal{T} \varphi(x) \geq 0 $ for  $ a.e. \ x\in \Omega $.\\ 
\textbf{(iii)} A semigroup $(\mathcal{T}(t))_{t\geq 0}$, in the Banach lattice $X$, is said to be positive if and only if, for every $ \varphi  \in X $, $ \varphi(x) \geq 0 $ implies $ \mathcal{T}(t) \varphi(x) \geq 0 $ for all $ t\geq 0$ for  $ a.e. \ x\in \Omega $. 
\end{definition}
Hence, the following generation result holds for the operator $\mathcal{A}_0$ in $X$.
\begin{proposition}\label{Proposition:3.2}
The following assertions hold:\\
\textbf{(i)} The closed operator generates a contraction holomorphic $C_0$-semigroup $(\mathcal{T}(t))_{t\geq 0}$ on $X$.\\
\textbf{(ii)} The semigroup $ (\mathcal{T}(t))_{t\geq 0} $ generated by $ \mathcal{A}_0 $ is compact and positive.\\
Moreover, the semigroup $(\mathcal{T}(t))_{t\geq 0} $ is given by the following matrix-valued operators 
\begin{equation*}
\mathcal{T}(t)=diag(\mathcal{T}_{1}(t),\cdots ,\mathcal{T}_{5}(t) )^*, \quad t\geq 0,
\end{equation*}
where, for each $i=1,\cdots,5 $, $ (\mathcal{T}_{i}(t) )_{t\geq 0}$ is the semigroup generated by the operator $ \mathcal{A}_i=d_i \Delta $ in $L^p(\Omega)$.
\end{proposition}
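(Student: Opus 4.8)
The plan is to exploit the fact that both $\mathcal{A}$ and the boundary operator $\mathcal{L}$ act componentwise, so that $\mathcal{A}_0=\mathcal{A}|_{\ker\mathcal{L}}$ is block diagonal and everything reduces to the scalar Neumann Laplacian on $L^p(\Omega)$. Since $\ker\mathcal{L}=\{\varphi\in W^{2,p}(\Omega)^5:\partial_n\varphi_i|_{\partial\Omega}=0,\ i=1,\dots,5\}$, one has $\mathcal{A}_0=\mathrm{diag}(\mathcal{A}_1,\dots,\mathcal{A}_5)$ with $\mathcal{A}_i=d_i\Delta$ and $D(\mathcal{A}_i)=\{u\in W^{2,p}(\Omega):\partial_n u|_{\partial\Omega}=0\}$, the (scaled) $L^p$-realization of the Neumann Laplacian; since $d_i>0$ merely rescales time it suffices to treat a single $\mathcal{A}_i$. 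First I would work on $L^2(\Omega)$: the symmetric, densely defined, closed, accretive form $a_i(u,v)=d_i\int_\Omega\nabla u\cdot\nabla v\,dx$ on $H^1(\Omega)$ is associated with the self-adjoint nonpositive operator $\mathcal{A}_i$, which therefore generates a holomorphic contraction $C_0$-semigroup $(\mathcal{T}_i(t))_{t\ge0}$ on $L^2(\Omega)$. The first Beurling--Deny criterion (using $|u|\in H^1(\Omega)$ and $a_i(|u|,|u|)\le a_i(u,u)$) yields positivity, and since the constants are stationary the semigroup is $L^\infty$-contractive, hence $L^1$-contractive by self-adjointness; thus $(\mathcal{T}_i(t))$ is sub-Markovian.

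Next I would extrapolate: by consistency and the Riesz--Thorin theorem the sub-Markovian semigroup extends to a positive contraction $C_0$-semigroup on every $L^q(\Omega)$, $1\le q\le\infty$, and for $1<q<\infty$ the symmetric sub-Markovian structure forces analyticity (Stein interpolation for analytic semigroups). Taking $q=p$ and using the $L^p$-elliptic theory of the Neumann problem on $\Omega$ (in the form employed elsewhere in this paper, cf. \cite{Amann,Amann2}), the generator of the resulting $L^p$-semigroup is exactly $\mathcal{A}_i$ with the domain written above; this is condition \textbf{(C2)} for the scalar operator. For compactness, fix $\lambda\in\varrho(\mathcal{A}_i)$: the resolvent $(\lambda-\mathcal{A}_i)^{-1}$ maps $L^p(\Omega)$ boundedly into $D(\mathcal{A}_i)\subset W^{2,p}(\Omega)$, which is compactly embedded in $L^p(\Omega)$ by the Rellich--Kondrachov theorem (valid on the bounded Lipschitz domain $\Omega$); hence $\mathcal{A}_i$ has compact resolvent, and an analytic semigroup with compact resolvent is compact, i.e.\ $\mathcal{T}_i(t)$ is a compact operator for every $t>0$ (see \cite{Pazy}).

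Finally I would reassemble. Since the resolvent of $\mathcal{A}_0$ is the diagonal of the resolvents of the $\mathcal{A}_i$ and the semigroup is recovered from the resolvent, the $C_0$-semigroup generated by $\mathcal{A}_0$ is $\mathcal{T}(t)=\mathrm{diag}(\mathcal{T}_1(t),\dots,\mathcal{T}_5(t))$, which is the stated representation. On $X=L^p(\Omega)^5$ with $\|\varphi\|=\sum_{i=1}^5\|\varphi_i\|$ we get $\|\mathcal{T}(t)\varphi\|=\sum_{i=1}^5\|\mathcal{T}_i(t)\varphi_i\|\le\sum_{i=1}^5\|\varphi_i\|=\|\varphi\|$, so $(\mathcal{T}(t))$ is a contraction semigroup; holomorphy, compactness and positivity all pass to the finite diagonal direct sum. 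This gives (i), (ii) and the representation.

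The main obstacle is the $L^p$-realization in the second step when $p>2$: on a merely Lipschitz domain, identifying the $L^p$-generator with $\Delta$ on $W^{2,p}(\Omega)$ subject to Neumann boundary conditions, and establishing its analyticity and compact resolvent on $L^p$, hinges on $W^{2,p}$-elliptic regularity for the Neumann problem, which is delicate for large $p$ and general Lipschitz boundaries; one either strengthens the assumptions on $\partial\Omega$ or invokes the $L^p$ operator theory cited by the authors. Everything else --- contractivity, positivity, compactness and the block-diagonal bookkeeping --- is then routine.
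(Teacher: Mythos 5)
Your proposal is correct, and the diagonal reduction to the scalar operators $\mathcal{A}_i=d_i\Delta$ with Neumann conditions is exactly the decomposition the paper uses; where you genuinely diverge is in how the scalar semigroup is produced and how its properties are verified. The paper simply cites the known $L^p$-theory (Lunardi, Section 3.1.1, and Davies) for generation of a contraction holomorphic semigroup of angle $\pi/2$, obtains compactness from Davies, and proves positivity by showing that the resolvent $R(\lambda,\mathcal{A}_0)$ is a positive operator via the maximum principle for the resolvent equation $(\lambda-\mathcal{A}_0)\psi=\varphi\ge0$. You instead construct the semigroup from the symmetric form $a_i(u,v)=d_i\int_\Omega\nabla u\cdot\nabla v$ on $L^2$, extract positivity and the sub-Markov property from the Beurling--Deny criteria, and extrapolate to $L^p$ by Riesz--Thorin and Stein interpolation, then get compactness from the compact resolvent (Rellich--Kondrachov) combined with analyticity. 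Your route is more self-contained and gives positivity at the semigroup level directly (arguably more robustly than the paper's rather terse maximum-principle step), at the cost of the extra identification step: matching the extrapolated $L^p$-generator with the operator $d_i\Delta$ on $\{u\in W^{2,p}(\Omega):\partial_n u=0\}$. You are right to flag that this identification, and indeed the paper's own choice $D(\mathcal{A})=W^{2,p}(\Omega)^5$, rests on $W^{2,p}$ elliptic regularity for the Neumann problem, which is delicate for $p>2$ on a merely Lipschitz boundary (the results the paper cites assume a smoother boundary); this is a weakness of the paper's setup rather than of your argument, and both proofs inherit it equally.
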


Now, we present the inter- and extrapolation spaces associated to the generator $\mathcal{A}_0$. We define  on $ X$ the norm $\|x\|_{-1}=\|R(\lambda,\mathcal{A}_0) x\| $, for $ x\in X . $  Then the completion of $ ( X, \|\cdot\|_{-1}) $ is called the extrapolation space of $ X $ associated to $ \mathcal{A}_0 $ and will be denoted by $ X_{-1}. $ This means that $ \mathcal{A}_0 $ has a unique extension $ \mathcal{A}_{-1}: D(\mathcal{A}_{-1})=X\longrightarrow   X_{-1} .$ Since for every $ t\geq 0, $ $ (\mathcal{T}(t))_{t\geq 0}$ commutes with the operator resolvent $ R(\lambda,\mathcal{A}_0),$ the extension of $ (\mathcal{T}(t))_{t\geq 0}$ to $ X_{-1} $ exists and defines an analytic semigroup $ (\mathcal{T}_{-1}(t))_{t\geq 0}$ which is generated by $ \mathcal{A}_{-1} $. Let $\alpha \in (0,1)$, we define the following interpolated extrapolation spaces by: $$  X_{\alpha-1}= \overline{X} ^{\|\cdot \|_{\alpha-1}},  \quad \mbox{where}\quad \|x \|_{\alpha-1}:= \sup_{\omega >0} \| (\omega^{\alpha}R(\omega,\mathcal{A}_{-1}-\lambda)x \| .$$ 
%The restriction $ \mathcal{A}_{\alpha-1}: D(\mathcal{A}_{\alpha-1})=X_{\alpha}\longrightarrow   X_{\alpha-1} $ of $ \mathcal{A}_{-1} $ generates the analytic semigroup $ (\mathcal{T}_{\alpha-1}(t))_{t\geq 0} $ on $ X_{\alpha-1} $ that is the extension of $ (\mathcal{T}(t))_{t\geq 0} $ to $ X_{\alpha-1}.$ We can see that $ \lambda-\mathcal{A}_{\alpha-1}: X_{\alpha}\longrightarrow X_{\alpha-1} $ is an isometric isomorphism.
Then, we have the following continuous embeddings:    
$$
D(\mathcal{A}_0) \hookrightarrow  X_{\alpha}  \hookrightarrow  X_{\beta} \hookrightarrow  X $$  $$ X \hookrightarrow  X_{\alpha -1}   \hookrightarrow  X_{\beta -1} \hookrightarrow  X_{-1}, $$ for all  $ 0 < \beta < \alpha < 1$, where $D(\mathcal{A}_0)$ is equipped with the graph norm that makes it a Banach space. 
% \begin{remark} \textbf{(i)} Note that the extrapolated spaces introduced here do not depend on any choice of $\lambda \in \rho(\mathcal{A}_0)$, which means that, any other choice of $\lambda$ gives the same extrapolated space with an equivalent norm, this holds by virtue of the resolvent equation, see \cite{Batkai,Nagel}. We recall that, the spectrum of $\mathcal{A}_0$ satisfies $\sigma(\mathcal{A}_0)\subset (-\infty,0]$. So that $ \mathbb{C}\setminus (-\infty,0] \subset \varrho (\mathcal{A}_0) $, see \cite{Davies}. 
% %Hence, without loss of generality, we can choose $\lambda > 0$, since, by virtue of the resolvent equation, $\tilde{\mathcal{K}}$ does not depend to $\lambda$.
%\end{remark}
\begin{remark}
It follows from \cite[Sections 4.3.3 and 4.6.1]{Triebel}, that the spaces $X_\alpha$ for $0<\alpha<1$ introduced here coincide with real interpolation spaces (of order $\alpha$) between $D(\mathcal{A}_0)$ and $X$. Moreover, the embedding $ Z \hookrightarrow X_{\alpha} $ also holds.
\end{remark}
Now, we give the following result for the extrapolated semigroup of $ (\mathcal{T}(t))_{t\geq 0}$ in $X_{\beta-1}$.
\begin{proposition}\label{Proposition:3.5}
 For each $0 \leq \delta \leq 1$, $(\mathcal{T}_{\delta-1}(t))_{t\geq 0} $ is the unique extension semigroup of $ (\mathcal{T}(t))_{t\geq 0} $ with the associated generator $ \mathcal{A}_{\delta-1} $ satisfying $ D(\mathcal{A}_{\delta-1})=X_{\delta} $. Moreover, the semigroup $(\mathcal{T}_{\delta-1}(t))_{t\geq 0} $ inherits all the properties of $  (\mathcal{T}(t))_{t\geq 0} $. That is, $(\mathcal{T}_{\delta-1}(t))_{t\geq 0} $ is strongly continuous, analytic, compact and positive. 
\end{proposition}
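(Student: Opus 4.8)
The plan is to establish Proposition~\ref{Proposition:3.5} by invoking the general abstract extrapolation machinery for strongly continuous semigroups, using Proposition~\ref{Proposition:3.2} as the starting point, and then transferring each qualitative property (strong continuity, analyticity, compactness, positivity) from the base semigroup $(\mathcal{T}(t))_{t\geq0}$ on $X$ to the extrapolated semigroup $(\mathcal{T}_{\delta-1}(t))_{t\geq0}$ on $X_{\delta-1}$. First I would recall the construction: by Proposition~\ref{Proposition:3.2}(i), $\mathcal{A}_0$ generates a contraction holomorphic $C_0$-semigroup, so $\lambda-\mathcal{A}_0$ is invertible for $\lambda$ in a half-plane; fixing such a $\lambda$, the norm $\|x\|_{\delta-1}:=\|(\lambda-\mathcal{A}_{-1})^{\delta-1}x\|$ (equivalently the sup-norm written in the excerpt) defines $X_{\delta-1}$ as the completion of $X$, and the Favard/interpolation identity gives $D(\mathcal{A}_{\delta-1})=X_\delta$ with $\mathcal{A}_{\delta-1}$ the unique closed extension of $\mathcal{A}_0$. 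Since each $\mathcal{T}(t)$ commutes with $R(\lambda,\mathcal{A}_0)$, it is bounded for the $\|\cdot\|_{\delta-1}$-norm and hence extends uniquely to a bounded operator $\mathcal{T}_{\delta-1}(t)$ on $X_{\delta-1}$; the semigroup law and the generator identification are then inherited by density. Uniqueness of the extension follows because $X$ is dense in $X_{\delta-1}$ and the extension is required to be bounded.

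Next I would verify the four structural properties in turn. Strong continuity: for $x\in X$, $\|\mathcal{T}_{\delta-1}(t)x-x\|_{\delta-1}=\|R(\lambda,\mathcal{A}_0)^{1-\delta}(\mathcal{T}(t)x-x)\|\to0$ as $t\downarrow0$ by strong continuity of $(\mathcal{T}(t))_{t\geq0}$ and boundedness of $R(\lambda,\mathcal{A}_0)^{1-\delta}$; density of $X$ in $X_{\delta-1}$ plus uniform boundedness of $\{\mathcal{T}_{\delta-1}(t):t\in[0,1]\}$ upgrades this to all of $X_{\delta-1}$. Analyticity: the resolvent estimate $\|R(\mu,\mathcal{A}_{\delta-1})\|_{\mathcal{L}(X_{\delta-1})}\leq M/|\mu-\omega|$ on a sector is obtained by conjugating the corresponding estimate for $\mathcal{A}_0$ with the isometry-up-to-equivalence between $X_{\delta-1}$ and $X$ induced by $(\lambda-\mathcal{A}_0)^{1-\delta}$; equivalently, $\mathcal{T}_{\delta-1}(\cdot)$ extends holomorphically to a sector because $\mathcal{T}(\cdot)$ does and the extension commutes with the fractional resolvent power used to define the $X_{\delta-1}$ norm. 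Positivity: $\mathcal{T}_{\delta-1}(t)$ maps the closure of the positive cone $X^+$ in $X_{\delta-1}$ into itself, because $\mathcal{T}(t)$ is positive on $X$ (Proposition~\ref{Proposition:3.2}(ii)) and $R(\lambda,\mathcal{A}_0)$ is positive (being the Laplace transform of the positive semigroup), so the defining seminorms and the cone are compatible and positivity passes to the completion.

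The compactness claim is the step I expect to be the main obstacle, since compactness is not automatically inherited under extrapolation in the same soft way as the other three properties. The argument I would use is the factorization $\mathcal{T}_{\delta-1}(t)=\mathcal{T}_{\delta-1}(t/2)\,\mathcal{T}_{\delta-1}(t/2)$ together with the smoothing property of analytic semigroups: for $t>0$, $\mathcal{T}_{\delta-1}(t/2)$ maps $X_{\delta-1}$ boundedly into $D(\mathcal{A}_{\delta-1})=X_\delta$, and then $\mathcal{T}_{\delta-1}(t/2)$ maps $X_\delta$ continuously into $X$ — but more is true, since on $X$ the semigroup $(\mathcal{T}(t))_{t\geq0}$ is compact (by Proposition~\ref{Proposition:3.2}(ii), which ultimately rests on the compact Sobolev embedding $W^{2,p}(\Omega)\hookrightarrow L^p(\Omega)$ on the bounded domain $\Omega$). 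Composing a bounded map into $X$ with the compact operator $\mathcal{T}(t/2)$ and then with the bounded inclusion $X\hookrightarrow X_{\delta-1}$ exhibits $\mathcal{T}_{\delta-1}(t)$ as a product that contains a compact factor, hence $\mathcal{T}_{\delta-1}(t)$ is compact for every $t>0$. One subtlety to address carefully is measuring the intermediate regularization in the right scale of spaces — I would route it through $X$ itself (where compactness of $\mathcal{T}(\cdot)$ is already known) rather than through $X_{\delta}$, so that the compact smoothing estimate $\|\mathcal{A}_{\delta-1}\mathcal{T}_{\delta-1}(s)\|\leq C s^{-1}$ combined with $X_\delta\hookrightarrow X\hookrightarrow X_{\delta-1}$ does the bookkeeping. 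With compactness in hand, all assertions of the proposition follow.
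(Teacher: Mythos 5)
Your proposal is correct and is, in substance, the standard extrapolation argument that the paper itself invokes: the paper's proof of this proposition consists of two sentences citing B\'atkai et al.\ for existence, strong continuity, positivity and the identification $D(\mathcal{A}_{\delta-1})=X_{\delta}$, and Greiner for analyticity and compactness, so you are essentially writing out the content of those references. The one place where you supply a genuine argument the paper omits entirely is compactness, and your factorization $\mathcal{T}_{\delta-1}(t)=\mathcal{T}_{\delta-1}(t/2)\,\mathcal{T}_{\delta-1}(t/2)$, routed through $X$ via the analytic smoothing $X_{\delta-1}\to D(\mathcal{A}_{\delta-1})=X_{\delta}\hookrightarrow X$ followed by the compact operator $\mathcal{T}(t/2)$ on $X$ and the bounded inclusion $X\hookrightarrow X_{\delta-1}$, is a valid and clean way to obtain it.
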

\begin{definition} \label{Positive cones}
We define the positive cone of $X_{\alpha}$ by $$ X_{\alpha}^+=X^{+}\cap X_{\alpha} ,$$
However, $ X_{\beta-1}^+ $ is a closed convex cone in $ X_{\beta-1}$, satisfying: 
$$ X_{\beta-1}^+ \cap X =X^{+} .$$
\end{definition}

\begin{proposition}\label{Lemma Local Lipschitz}
The function $\tilde{\mathcal{K}}: [0,+\infty)\times X_{\alpha}\longrightarrow X_{\beta-1}$ is Lipschitz continuous in bounded sets. That is, for all $R>0$ there exists $L_R \geq 0$ such that 
\begin{equation}
\| \tilde{\mathcal{K}}(t,\rho)-\tilde{\mathcal{K}}(s,\upsilon) \|_{\beta-1} \leq L_R (\mid t-s \mid+ \|\rho-\upsilon \|_{\alpha})  \quad \text{ for all } \rho, \upsilon \in B(0,R) \, \text{for all } t,s\geq 0. \label{Lipschitz condition}
\end{equation} 
\end{proposition}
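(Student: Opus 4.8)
The plan is to exploit the decomposition $\tilde{\mathcal{K}}=\mathcal{K}+(\lambda-\mathcal{A}_{\beta-1})\mathcal{D}\mathcal{M}$ and establish the estimate \eqref{Lipschitz condition} for each summand separately, using throughout the two structural facts already recorded: the Sobolev embedding $X_{\alpha}\hookrightarrow C^1(\overline{\Omega})^5$ (valid since $\alpha>1/p+1/2$), and the continuous embeddings $X\hookrightarrow X_{\beta-1}$ together with $Z=W^{2,p}(\Omega)^5\hookrightarrow X_{\beta}$ (the latter because $\beta<\alpha$ and $Z\hookrightarrow X_{\alpha}$). I fix $R>0$ and work on the ball $B(0,R)\subset X_{\alpha}$; on such a ball every component $\varphi_i$ and its gradient $\nabla\varphi_i$ are bounded in $C(\overline{\Omega})$ by a constant depending only on $R$, which is the property that drives all the estimates.

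First I would treat $\mathcal{K}$ as a map $[0,+\infty)\times X_{\alpha}\to X=L^p(\Omega)^5$. The terms linear in $\varphi$ with coefficients among the constants $b_i,c_j,\delta_k$ are trivially Lipschitz in $\varphi$; the time-dependent coefficients $\gamma,\phi$ are built from $\zeta$ in \eqref{z}, which is continuous with bounded derivative and hence globally Lipschitz, so the terms $\gamma(t)\varphi_4$ and $\phi(t)\varphi_3$ are jointly Lipschitz in $(t,\varphi)$ on $B(0,R)$ — this is the origin of the $|t-s|$ contribution in \eqref{Lipschitz condition}. For the imitation terms $\mathcal{F},\mathcal{G},\mathcal{H}$ of \eqref{F}--\eqref{H}, the parameter $\varepsilon>0$ keeps the arguments of $\xi$ away from the singularity, $\xi$ in \eqref{xiw} is smooth, bounded, with bounded derivative, and the remaining factors are products of the $\varphi_i$; composing and multiplying functions that are uniformly bounded in $C(\overline{\Omega})$ on $B(0,R)$ yields maps Lipschitz on $B(0,R)$ for the sup-norm, hence for the $L^p$-norm. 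For the advection contributions $\nabla\cdot(\varphi_j V_j(\varphi)\vec{\nu})$, $j=2,3$, I would write $\nabla\cdot(\varphi_j V_j(\varphi)\vec{\nu})=\nabla\bigl(\varphi_j V_j(\varphi)\bigr)\cdot\vec{\nu}+\varphi_j V_j(\varphi)\,(\nabla\cdot\vec{\nu})$; since $\varphi_j V_j(\varphi)=V_{j,\max}\varphi_j(1-\tilde{\rho})$ is a fixed polynomial in $\varphi_1,\dots,\varphi_5$, its gradient is a polynomial in the $\varphi_i$ and $\nabla\varphi_i$, all controlled on $B(0,R)$, while $\vec{\nu}\in W^{1,\infty}(\Omega,\mathbb{R}^2)$; hence this term lies in $L^{\infty}(\Omega)\subset L^p(\Omega)$ and depends Lipschitz-continuously on $\varphi\in B(0,R)$. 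Summing the five components gives $\mathcal{K}$ Lipschitz on bounded sets into $X$, and $X\hookrightarrow X_{\beta-1}$ transfers this to the target space.

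Next I would treat $(\lambda-\mathcal{A}_{\beta-1})\mathcal{D}\mathcal{M}$. The nonlinear boundary operator $\mathcal{M}$ vanishes on $\Gamma_1$, while on $\Gamma_2$ it is a combination of the linear terms $-v_{i,\mathrm{out}}\varphi_i$ and the polynomial terms $\varphi_j V_j(\varphi)$; as above these are Lipschitz on $B(0,R)$ as maps into $C^1(\overline{\Omega})\subset W^{1,p}(\Omega)$, and composing with the bounded linear trace operator $W^{1,p}(\Omega)\to W^{1-1/p,p}(\partial\Omega)$ shows $\mathcal{M}:X_{\alpha}\to\partial X$ is Lipschitz on bounded sets. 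The Dirichlet map $\mathcal{D}$, solving $(\lambda-\mathcal{A})v=0$, $\mathcal{L}v=w$ for $\lambda\in\varrho(\mathcal{A})$, is bounded and linear from $\partial X=W^{1-1/p,p}(\partial\Omega)^5$ into $Z=W^{2,p}(\Omega)^5$ by elliptic regularity for the Neumann problem (see \cite{Amann,Amann2}), hence bounded from $\partial X$ into $X_{\beta}=D(\mathcal{A}_{\beta-1})$; and $\lambda-\mathcal{A}_{\beta-1}$ is bounded and linear from $X_{\beta}$ onto $X_{\beta-1}$. Since the composition of a bounded-set-Lipschitz map with two bounded linear maps is again bounded-set-Lipschitz, $(\lambda-\mathcal{A}_{\beta-1})\mathcal{D}\mathcal{M}:X_{\alpha}\to X_{\beta-1}$ satisfies \eqref{Lipschitz condition} with no $t$-dependence, and adding the two pieces completes the proof.

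The main obstacle, and the place where the precise parameter choices matter, is the pair of terms containing first-order derivatives of the unknown — the advective divergence $\nabla\cdot(\varphi_j V_j(\varphi)\vec{\nu})$ inside $\mathcal{K}$ and the nonlinear Neumann datum $\varphi_j V_j(\varphi)$ inside $\mathcal{M}$. Controlling them requires that the $X_{\alpha}$-norm dominate the $C^1(\overline{\Omega})$-norm, which is exactly what $\alpha\in(1/p+1/2,1)$ guarantees; one must also verify that the Dirichlet lift gains two full derivatives, so that $\mathcal{D}\mathcal{M}\varphi$ genuinely lies in $D(\mathcal{A}_{\beta-1})=X_{\beta}$ — this is where $\beta<\alpha$, hence $Z\hookrightarrow X_{\beta}$, is used. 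The remaining estimates are routine bookkeeping with products and compositions of uniformly bounded $C^1$ functions on $B(0,R)$.
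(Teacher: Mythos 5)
Your proposal is correct and follows essentially the same route as the paper: the same decomposition $\tilde{\mathcal{K}}=\mathcal{K}+(\lambda-\mathcal{A}_{\beta-1})\mathcal{D}\mathcal{M}$, pointwise Lipschitz estimates for the imitation and advection terms transferred to $L^p$ via the embedding $X_{\alpha}\hookrightarrow C^1(\overline{\Omega})^5$, the trace theorem for $\mathcal{M}$, the boundedness of $(\lambda-\mathcal{A}_{\beta-1})\mathcal{D}:\partial X\to X_{\beta-1}$, the Lipschitz continuity of $\gamma,\phi$ for the $|t-s|$ term, and the embedding $X\hookrightarrow X_{\beta-1}$ to land in the target space. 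The only cosmetic difference is that you justify the boundedness of the Dirichlet lift via elliptic regularity, whereas the paper simply cites it as part of the Greiner--Desch framework.
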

\begin{remark}\label{Remark Local Xbeta}
Notice that if $ 1/p+1/2<\beta <\alpha <1 $, then $ X_{\beta} \hookrightarrow C^1(\overline{\Omega})^5 $. So, \eqref{Lipschitz condition} holds also in $X_{\beta}$.
\end{remark}
The following regularity Lemma is also necessary.
\begin{lemma}\label{Lemma regularity}
 Let $ 0 <\beta  <1 $ and $ \mathcal{B}: [0,T]\longrightarrow X_{\beta-1} $ such that there exist $ 0< \eta \leq 1 $ and $l\geq 0$ satisfying
\begin{equation}
\| \mathcal{B}(t)-\mathcal{B}(s) \|_{\beta-1} \leq l |t-s|^{\eta}, \quad t,s \in [0,T]. \label{Lemma Holder conti B}
\end{equation}
Then, $$ v(t) =\int_{0}^{t} \mathcal{T}_{\beta-1}(t-s) \mathcal{B}(s) ds \in D(\mathcal{A}_{\beta-1})=X_{\beta} \quad \text{ for } 0  \leq t \leq  T.$$
Moreover, $v\in C^{1}((0,T],X_{\beta})$.
\end{lemma}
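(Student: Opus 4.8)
### Proof plan for Lemma 3.13 (the regularity lemma)

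The plan is to prove this via the standard theory of analytic semigroups acting on Hölder-continuous forcing terms, adapted to the extrapolation scale $X_{\beta-1}\hookrightarrow X_{-1}$. First I would recall the smoothing estimates for the analytic semigroup $(\mathcal{T}_{\beta-1}(t))_{t\geq 0}$ generated by $\mathcal{A}_{\beta-1}$ on $X_{\beta-1}$: since $\mathcal{A}_0$ generates a bounded holomorphic $C_0$-semigroup (Proposition~\ref{Proposition:3.2}), and the extrapolated semigroup inherits analyticity (Proposition~\ref{Proposition:3.5}), there are constants $M_0,M_1$ with $\|\mathcal{T}_{\beta-1}(t)\|_{\mathcal{L}(X_{\beta-1})}\leq M_0$ and $\|\mathcal{A}_{\beta-1}\mathcal{T}_{\beta-1}(t)\|_{\mathcal{L}(X_{\beta-1})}\leq M_1/t$ for $t\in(0,T]$; equivalently, $\mathcal{T}_{\beta-1}(t)$ maps $X_{\beta-1}$ into $D(\mathcal{A}_{\beta-1})=X_\beta$ with $\|\mathcal{T}_{\beta-1}(t)\xi\|_\beta\leq (C/t)\|\xi\|_{\beta-1}$. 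These are exactly the ingredients needed to run the classical argument (as in Pazy, \emph{Semigroups of Linear Operators}, Ch.~4, Thm.~3.5, or Lunardi).

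The core of the proof is to show $v(t)\in X_\beta$ and that $t\mapsto \mathcal{A}_{\beta-1}v(t)$ is well-defined and continuous. The standard trick is the splitting
\[
\mathcal{A}_{\beta-1}\int_0^t \mathcal{T}_{\beta-1}(t-s)\mathcal{B}(s)\,ds
=\int_0^t \mathcal{A}_{\beta-1}\mathcal{T}_{\beta-1}(t-s)\bigl(\mathcal{B}(s)-\mathcal{B}(t)\bigr)\,ds
+\int_0^t \mathcal{A}_{\beta-1}\mathcal{T}_{\beta-1}(t-s)\mathcal{B}(t)\,ds.
\]
For the first integral, the Hölder hypothesis \eqref{Lemma Holder conti B} combined with the $M_1/(t-s)$ bound gives an integrand controlled by $l\, M_1 (t-s)^{\eta-1}$, which is integrable on $[0,t]$; hence this term defines an element of $X_{\beta-1}$ and, since each $\mathcal{A}_{\beta-1}\mathcal{T}_{\beta-1}(t-s)(\mathcal{B}(s)-\mathcal{B}(t))$ lies in the range, a density/closedness argument (using that $\mathcal{A}_{\beta-1}$ is closed) shows the integral of $\mathcal{T}_{\beta-1}(t-s)(\mathcal{B}(s)-\mathcal{B}(t))$ lies in $D(\mathcal{A}_{\beta-1})=X_\beta$. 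For the second integral, $\int_0^t \mathcal{A}_{\beta-1}\mathcal{T}_{\beta-1}(\tau)\,d\tau\,\mathcal{B}(t)=(\mathcal{T}_{\beta-1}(t)-I)\mathcal{B}(t)\in X_{\beta-1}$, and applying $\mathcal{A}_{\beta-1}^{-1}$-type reasoning (or directly: $\mathcal{T}_{\beta-1}(t)\mathcal{B}(t)-\mathcal{B}(t)$ together with the fact that $\mathcal{T}_{\beta-1}(t)\mathcal{B}(t)\in X_\beta$ for $t>0$) shows this contributes an element of $X_\beta$ as well — here one uses $\beta<1$ so that $\mathcal{B}(t)\in X_{\beta-1}$ and $X_\beta = D(\mathcal{A}_{\beta-1})$ is exactly the smoothing target. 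Summing, $v(t)\in X_\beta$ for every $t\in[0,T]$.

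For the $C^1$-regularity on $(0,T]$, I would differentiate the variation-of-constants expression: formally $v'(t)=\mathcal{B}(t)+\mathcal{A}_{\beta-1}v(t)$ (after the splitting above, which removes the singularity), and then check continuity of each piece in $X_\beta$ using dominated convergence on the first integral (the bound $(t-s)^{\eta-1}$ is uniform locally in $t$) and the strong continuity plus analyticity of $(\mathcal{T}_{\beta-1}(t))$ away from $0$ on the second. The main obstacle, and the only place requiring care, is the bookkeeping on the extrapolation scale: one must make sure all the smoothing estimates $\|\mathcal{A}_{\beta-1}\mathcal{T}_{\beta-1}(t)\|_{\mathcal{L}(X_{\beta-1})}\lesssim t^{-1}$ and $\|\mathcal{T}_{\beta-1}(t)\|_{\mathcal{L}(X_{\beta-1},X_\beta)}\lesssim t^{-1}$ genuinely hold for the \emph{extrapolated} generator, which follows because $\mathcal{A}_{\beta-1}$ is similar (via the isomorphism $R(\lambda,\mathcal{A}_0):X_{\beta-1}\to X_\beta$) to a part of $\mathcal{A}_0$ on a space where the holomorphic estimates are already known — everything else is then the textbook parabolic-regularity computation. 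I would also remark that by Remark~\ref{Remark Local Xbeta} (or directly) the same conclusion with $\beta$ replaced throughout is exactly what is needed downstream; no further adjustment is required.
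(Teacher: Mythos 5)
Your proposal is correct and follows essentially the same route as the paper: the Pazy-style splitting of $v(t)$ into the frozen-argument term $\int_0^t \mathcal{T}_{\beta-1}(t-s)\mathcal{B}(t)\,ds$ and the Hölder-difference term $\int_0^t \mathcal{T}_{\beta-1}(t-s)[\mathcal{B}(s)-\mathcal{B}(t)]\,ds$, the smoothing bound $\|\mathcal{A}_{\beta-1}\mathcal{T}_{\beta-1}(t)\|\lesssim t^{-1}$ from analyticity of the extrapolated semigroup (Proposition~\ref{Proposition:3.5}), integrability of $(t-s)^{\eta-1}$, and closedness of $\mathcal{A}_{\beta-1}$ (which the paper implements via an explicit $\varepsilon$-truncation of the second integral). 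No substantive difference.
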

\begin{definition}\cite{Amann2,Desh}
Let $ u_0  \in  X_{\alpha}$ and $T>0$. By a solution to equation \eqref{eq:main1System APC211}, we mean a function $u\in C([0,T],X_{\alpha})\cap C^{1}([0,T],X_{\beta-1})$, such that $u(t) \in X_{\beta}$ for  $0 \leq t \leq  T$ and such that \eqref{eq:main1System APC211} is pointwisely satisfied. In particular, this solution must satisfy the following integral formula:
\begin{equation}
u(t)=\mathcal{T}(t)u_{0}+ \int_{0}^{t} \mathcal{T}_{\beta-1}(t-s)\left( \mathcal{K}(s,u(s))+(\omega-\mathcal{A}_{\beta-1}) \mathcal{D} \mathcal{M}(u(s))\right)  ds, \quad t\in [0,T].
\end{equation}
\end{definition}

% where It follows the identification $ X\simeq L^p(\Omega, \mathbb{R}^{5})$. Similarly we identify the interpolated space $X_{\alpha}$, and the extrapolated space $ X_{\beta-1} $ using the norm 
\subsection{Local existence and regularity}\label{Local existence and regularity} In this Section, we prove the local existence, uniqueness and regularity of 
solutions to equation \eqref{eq:main1System APC211} which yields the local well-posedness for the model  \eqref{eq:main1System APC21}-\eqref{initial condition}. 
The following result is an extension, using extrapolation theory, of results due to Pazy \cite[Section 6.3]{Pazy}, Lunardi \cite[Section 7.1]{Lunardi} and Cholewa \& Dlotko \cite[Chapter 2. Section 3]{T.Dlotko}. A similar result, with weaker spatial regularity, is due to Amann \cite[Theorem 12.1]{Amann}. To obtain our result, we use Proposition \ref{Lemma Local Lipschitz} and Lemma \ref{Lemma regularity}.
\begin{theorem}[Local existence and regularity]\label{Theorem maximal existence}
For each $u_0 \in X_\alpha $ there exist a maximal time $ T(u_0 ) >0 $ and a unique maximal solution $u(\cdot):=u(\cdot ,u_{0}) \in C([0,T(u_0 )),X_{\alpha})\cap C^{1}((0,T(u_0 )),X_{\beta})$ of equation \eqref{eq:main1System APC211} such that
\begin{equation} \label{integral formulation of solution}
u(t)=\mathcal{T}(t)u_{0}+ \int_{0}^{t} \mathcal{T}_{\beta-1}(t-s)\underbrace{\left( \mathcal{K}(s,u(s))+(\omega-\mathcal{A}_{\beta-1}) \mathcal{D} \mathcal{M}(u(s))\right)}_{= \tilde{\mathcal{K}}(s,u(s))}  ds, \quad t\in [0,T(u_0 )).
\end{equation}
Moreover the solution $u$ satisfies the following blow-up property:
\begin{equation}
T(u_0 )=+\infty \quad \text{or} \quad  \limsup_{t\rightarrow T(u_0 )^{-} } \| u(t )\|_{\alpha} = +\infty . \label{ETFFormula}
\end{equation} 
\end{theorem}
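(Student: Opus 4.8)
The plan is to derive the result from the abstract semilinear theory by verifying the standing hypotheses and then running a fixed-point argument on the integral formulation \eqref{integral formulation of solution}. First I would fix $u_0\in X_\alpha$ and, for $T>0$ and $R>\|u_0\|_\alpha$ to be chosen, set up the complete metric space $\Sigma_{T,R}:=\{u\in C([0,T],X_\alpha):u(0)=u_0,\ \|u(t)-u_0\|_\alpha\le R\ \forall t\}$ with the sup-in-$\alpha$ metric. On this set I would define the map
\begin{equation*}
(\Phi u)(t):=\mathcal{T}(t)u_0+\int_0^t \mathcal{T}_{\beta-1}(t-s)\,\tilde{\mathcal{K}}(s,u(s))\,ds,\qquad t\in[0,T].
\end{equation*}
The key analytic inputs are: Proposition \ref{Proposition:3.5}, which gives that $(\mathcal{T}_{\delta-1}(t))_{t\ge0}$ is analytic on each $X_{\delta-1}$, so the standard smoothing estimate $\|\mathcal{T}_{\beta-1}(t)x\|_\alpha\le C t^{-(\alpha-\beta)}\|x\|_{\beta-1}$ (valid since $\alpha-\beta<1$) holds; and Proposition \ref{Lemma Local Lipschitz}, which gives the local Lipschitz bound \eqref{Lipschitz condition} for $\tilde{\mathcal{K}}$. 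Combining these, for $u\in\Sigma_{T,R}$ one estimates $\|(\Phi u)(t)-u_0\|_\alpha$ by $\|(\mathcal{T}(t)-I)u_0\|_\alpha$ (which tends to $0$ as $t\to0$ since $u_0\in X_\alpha$ and the semigroup is strongly continuous on $X_\alpha$) plus $C\int_0^t (t-s)^{-(\alpha-\beta)}(L_{R'}(R'+1))\,ds\le C' T^{1-(\alpha-\beta)}$; shrinking $T$ makes this $\le R$, so $\Phi$ maps $\Sigma_{T,R}$ into itself. The same two ingredients give $\sup_t\|(\Phi u)(t)-(\Phi v)(t)\|_\alpha\le C' T^{1-(\alpha-\beta)}L_{R'}\sup_t\|u(t)-v(t)\|_\alpha$, so after further shrinking $T$ the map is a strict contraction; Banach's fixed point theorem yields a unique local mild solution in $C([0,T],X_\alpha)$.

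Next I would upgrade regularity. Once $u\in C([0,T],X_\alpha)$ is known, the map $s\mapsto \tilde{\mathcal{K}}(s,u(s))$ is continuous into $X_{\beta-1}$ by Proposition \ref{Lemma Local Lipschitz}, and in fact Hölder continuous: $\|\tilde{\mathcal{K}}(t,u(t))-\tilde{\mathcal{K}}(s,u(s))\|_{\beta-1}\le L_R(|t-s|+\|u(t)-u(s)\|_\alpha)$, and $t\mapsto u(t)$ is itself Hölder continuous into $X_\alpha$ on $[\varepsilon,T]$ by the standard parabolic bootstrap (the convolution term is Hölder because of the analytic smoothing, and $t\mapsto\mathcal{T}(t)u_0$ is Hölder into $X_\alpha$ off $t=0$). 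Feeding this Hölder continuity into Lemma \ref{Lemma regularity} shows the convolution term lies in $C^1((0,T],X_\beta)$ with values in $D(\mathcal{A}_{\beta-1})=X_\beta$, and since $\mathcal{T}(t)u_0\in X_\beta$ for $t>0$ with $\mathcal{T}(\cdot)u_0\in C^1((0,T],X_\beta)$ by analyticity, we conclude $u\in C^1((0,T],X_\beta)$ and $u$ satisfies \eqref{eq:main1System APC211} pointwise on $(0,T]$, i.e. it is a classical solution in the sense defined above.

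Finally I would pass from the local solution to a maximal one: define $T(u_0):=\sup\{T>0:\ $a solution on $[0,T]$ exists$\}$ and patch the (unique, by the contraction argument applied with any interior time as new initial time) solutions together to get $u\in C([0,T(u_0)),X_\alpha)\cap C^1((0,T(u_0)),X_\beta)$. For the blow-up alternative \eqref{ETFFormula}, I would argue by contradiction: if $T(u_0)<\infty$ and $\limsup_{t\to T(u_0)^-}\|u(t)\|_\alpha=:R_0<\infty$, then the length of the existence interval furnished by the fixed-point argument started at any initial time $t_0$ depends only on a bound for $\|u(t_0)\|_\alpha$ (through $R'$ and $L_{R'}$) and on the uniform continuity of $s\mapsto(\mathcal{T}(s)-I)$ on bounded subsets of $X_\alpha$ — hence is bounded below by some $\tau=\tau(R_0)>0$ uniformly for $t_0$ near $T(u_0)$; starting from $t_0=T(u_0)-\tau/2$ extends the solution past $T(u_0)$, contradicting maximality.

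The step I expect to be the main obstacle is the self-map/contraction estimate near $t=0$: one must show $\|(\mathcal{T}(t)-I)u_0\|_\alpha\to0$, which requires $u_0\in X_\alpha$ (so continuity of $\mathcal{T}(\cdot)$ at $0$ holds in the $X_\alpha$-norm, using that $X_\alpha$ is, up to the Remark, a real interpolation space), together with a careful use of the singular but integrable kernel $(t-s)^{-(\alpha-\beta)}$ — this is where the constraint $\alpha-\beta<1$, guaranteed by $1/p+1/2<\beta<\alpha<1$, is essential. The rest is routine given Propositions \ref{Proposition:3.5} and \ref{Lemma Local Lipschitz} and Lemma \ref{Lemma regularity}.
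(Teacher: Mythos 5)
Your overall architecture matches the paper's: a fixed-point argument for the mild formulation in the interpolation--extrapolation scale (the paper outsources this step to Lunardi, Section~7.1, applied with base space $X_{\beta-1}$ and semigroup $(\mathcal{T}_{\beta-1}(t))_{t\geq 0}$), followed by a H\"older-continuity bootstrap fed into Lemma \ref{Lemma regularity} to obtain $C^{1}((0,T],X_{\beta})$, and finally the standard continuation and blow-up argument. Your regularity upgrade differs only mildly in detail (you get H\"older continuity of $u$ by a parabolic bootstrap in $X_\alpha$, whereas the paper interpolates between the $C^1$ bound in $X_{\beta-1}$ and boundedness in $X_\alpha$ via the reiteration theorem), and your continuation/blow-up step is the same as the paper's appeal to Lunardi's Proposition 7.1.8.

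There is, however, a concrete error at the heart of your contraction argument. You invoke the smoothing estimate $\|\mathcal{T}_{\beta-1}(t)x\|_{\alpha}\leq Ct^{-(\alpha-\beta)}\|x\|_{\beta-1}$ and justify its integrability by ``$\alpha-\beta<1$''. But $X_{\beta-1}$ is an \emph{extrapolation} space sitting below $X$ in the scale, while $X_{\alpha}$ sits above $X$: the index gap between them is $\alpha-(\beta-1)=1+\alpha-\beta$, so the correct analytic-smoothing exponent is $t^{-(1+\alpha-\beta)}$. Under the standing convention $1/p+1/2<\beta<\alpha<1$ this exponent exceeds $1$, the kernel $(t-s)^{-(1+\alpha-\beta)}$ is not integrable at $s=t$, and both your self-map bound $C'T^{1-(\alpha-\beta)}$ and your contraction constant collapse. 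This is not a cosmetic slip: the whole difficulty of nonlinear boundary conditions is precisely that the boundary feedback $(\omega-\mathcal{A}_{\beta-1})\mathcal{D}\mathcal{M}(u)$ lives only in $X_{\beta-1}$, strictly below $X$, while the solution must be controlled in a space $X_{\alpha}$ embedding into $C^{1}(\overline{\Omega})^{5}$, which forces an index gap larger than one; closing it requires either $\alpha<\beta$ (incompatible with the hypotheses here) or additional structure (time-H\"older regularity of the forcing, mapping properties of the Dirichlet map) that your scheme does not exploit. As written, the central self-map/contraction step of your proof fails; the paper's own proof conceals the same delicate point inside its citation of Lunardi, but your version makes the faulty estimate explicit and rests on it.
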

\begin{proof}
Let $u_0 \in X_\alpha $. So, using Proposition \ref{Lemma Local Lipschitz}, it yields from \cite[Section 7.1]{Lunardi} (by taking $X_{\beta-1}$ instead of $X$ and $ (\mathcal{T}_{\beta-1}(t))_{\geq 0}$ instead of $(\mathcal{T}(t))_{\geq 0}$), that there exist $T>0$ (chosen small enough) and a unique solution $u\in C([0,T],X_{\alpha})\cap C^{1}([0,T],X_{\beta-1})$  of equation \eqref{eq:main1System APC211} satisfying the variation of constants formula \eqref{integral formulation of solution}. Note that, in our case, $ \mathcal{A}_0 $ is densely defined in $X$, so that the continuity at $t=0$ holds. Now, to conclude, we use the integral formula of our solution \eqref{integral formulation of solution} and Lemma \ref{Lemma regularity} to prove that $u\in  C^{1}((0,T],X_{\beta})$. \\
First, note that $u_0 \in X_{\alpha}\hookrightarrow X_{\beta } $ in addition with the analyticity of the semigroup $ (\mathcal{T}(t))_{t\geq 0}  $ implies that $\mathcal{T}(t)u_0 \in C^{1}([0,T],X_{\beta}) $. Then, it remains to prove that 
$$t\mapsto  v(t)= \int_{0}^{t} \mathcal{T}_{\beta-1}(t-s)\tilde{\mathcal{K}} (s,u(s))  ds \in  C^{1}((0,T],X_{\beta}) .$$
Remark that, $u$ is H\"older continuous in $X_{\beta-1}$ (since it is $C^1$), that is, there exist $\tilde{l}\geq 0$ and $0<\vartheta \leq 1$, such that
\begin{equation}
\| u(t)-u(s) \|_{\beta-1} \leq \tilde{l} |t-s|^{\vartheta},\quad t,s \in [0,T].\label{Holder continuity for solution}
\end{equation}
Moreover, since $u_0 \in X_{\alpha} \hookrightarrow X_{\beta}$, it yields, using Remark \ref{Remark Local Xbeta}, that $$u\in  C([0,T],X_{\beta})\cap C^{1}([0,T],X_{\beta-1}).$$
Hence, $u$ is bounded in $X_{\beta}$, since it is continuous. Furthermore, using the reiteration theorem, we obtain that $X_{\beta}=(X_{\alpha},X_{\beta-1})_{\tilde{\theta}}$, with $ 0<\tilde{\theta}<1 $. That is, there exists $ c(\alpha,\beta) \geq 0 $, such that
$$ \| u(t)-u(s) \|_{\beta} \leq c(\alpha,\beta) \| u(t)-u(s)\|_{\alpha}^{1-\tilde{\theta}}\| u(t)-u(s)\|_{\beta-1}^{\tilde{\theta}}, \quad t,s\in [0,T].$$ 
%Now, we use the integral formula  of our solution \eqref{integral formulation of solution} and Lemma \ref{Lemma regularity} to prove that $u\in  C^{1}([0,T],X_{\beta})$.
 %So, in view of Proposition \ref{Lemma Local Lipschitz} (for $X_\beta$ instead of $X_\alpha$ and by Remark \ref{Remark Local Xbeta}), we prove that $ u $ is H\"older continuous in $X_{\beta}$.
Therefore, we have
$$ \| u(t)-u(s) \|_{\beta} \leq \tilde{c}(\alpha,\beta) | t-s|^{\tilde{\theta}\vartheta }, \quad t,s\in [0,T],$$ 
for some constant $\tilde{c}(\alpha,\beta) \geq 0$. Note that $u$ is bounded in $X_\beta$. Hence, by \eqref{Holder continuity for solution} and Remark \ref{Remark Local Xbeta} (using Proposition \ref{Lemma Local Lipschitz} for $X_\beta$ instead of $X_\alpha$), we obtain that
\begin{align*}
\| \tilde{\mathcal{K}}(t,u(t))-\tilde{\mathcal{K}}(s,u(s)) \|_{\beta-1} 
&\leq L_R (\mid t-s \mid+ \|u(t)-u(s)\|_{\beta})  \\
& \leq \tilde{L_R} (\mid t-s \mid+ |t-s |^{\tilde{\theta}\vartheta} ), \quad t,s\in [0,T].
\end{align*}
This proves that $\tilde{\mathcal{K}}(\cdot,u(\cdot))$ is H\"older continuous in $X_{\beta-1}$. Then, we conclude using Lemma \ref{Lemma regularity}, by taking $\mathcal{B}(\cdot)=\tilde{\mathcal{K}}(\cdot,u(\cdot))$ and we obtain that $ u\in C^{1}((0,T],X_{\beta}) $.\\
Henceforth, we can argue similarly as in \cite[Proposition 7.1.8]{Lunardi} to prove that the solution $u$ can be extended continuously to a maximal interval $[0,T(u_0))$, where $T(u_0)>0$ is the maximal time, such that the property \eqref{ETFFormula} is also satisfied.
\end{proof}
\begin{remark}
We mention that, in \cite[Theorem 7.1.2]{Lunardi}, the result of existence of a solution (without regularity) of equation \eqref{eq:main1System APC211} uses the fractional power space $ D(\mathcal{A}_0^\alpha) $ as an intermediate space $ X_{\alpha} $. However, this fact does not affect our existence result since the proof can be given in a similar way for any intermediate Banach space between $D(\mathcal{A}_0)$ and $X$, see the proof of  \cite[Theorem 7.1.2]{Lunardi}.
\end{remark}

\subsection{Positivity}
This section aims to prove the positivity of the maximal solution of our model \eqref{eq:main1System APC21}-\eqref{initial condition} obtained so far in Section \ref{Local existence and regularity}. That is, it remains to establish that the positive cone $ X^{+} $ is positively invariant for equation \eqref{eq:main1System APC211}. For that purpose, we use the subtangential condition due to Martin \& Smith \cite{Martin-Smith}, we refer also to the monograph of Martin \cite{Martin} and to Hirsch \& Smith \cite{Hirsch-Smith}. The proof of the present section is inspired from that in \cite[Section 2]{Martin-Smith}. \\

For $ \varphi \in X_{\alpha} $, we define 
\begin{align*}
[(\lambda-\mathcal{A}_{\beta-1})  \mathcal{D} \mathcal{M}]\varphi(x)=([(\lambda-\mathcal{A}_{\beta-1})  \mathcal{D} \mathcal{M}]\varphi_{1}(x),\cdots,[(\lambda-\mathcal{A}_{\beta-1})  \mathcal{D} \mathcal{M}]\varphi_{5}(x))^{*},
\end{align*}
and, then
\begin{align*}
&\tilde{\mathcal{K}}(t,\varphi)(x):=\tilde{\mathcal{K}}(t,\varphi(x))\\=&\biggl(\mathcal{K}_1 (t,\varphi_1(x),\nabla \varphi(x))+[(\lambda-\mathcal{A}_{\beta-1})  \mathcal{D} \mathcal{M}]_{1}\varphi_{1}(x),\cdots,\mathcal{K}_5 (t,\varphi_5(x),\nabla \varphi(x))+[(\lambda-\mathcal{A}_{\beta-1})  \mathcal{D} \mathcal{M}]_{5}\varphi_{5}(x)\biggr)^*,
\end{align*}
for $ a.e. $ $ x\in \Omega$. Then, we have the following positivity result.
\begin{theorem}[Positivity]\label{Positivity}
For each $ u_0 \in X_{\alpha}^{+} $ equation \eqref{eq:main1System APC211}  has a unique maximal solution $u(\cdot ,u_{0}) \in C([0,T(u_0 )),X_{\alpha})\cap C^{1}((0,T(u_0 )),X_{\beta})$  such that $ u(t)\in  X_{\alpha}^{+} $ for all $ t\in [0,T(u_0 ) ) $.
\end{theorem}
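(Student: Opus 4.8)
The plan is to apply the subtangential (positive invariance) criterion of Martin \& Smith to the semilinear Cauchy problem \eqref{eq:main1System APC211} on the extrapolated space $X_{\beta-1}$, using the positivity of the extrapolated semigroup $(\mathcal{T}_{\beta-1}(t))_{t\geq 0}$ from Proposition \ref{Proposition:3.5}. Recall that the abstract Martin criterion states that the closed convex set $X_{\beta-1}^+$ is positively invariant for \eqref{eq:main1System APC211} provided that the semigroup leaves it invariant (which holds by Proposition \ref{Proposition:3.5}) and that the nonlinearity satisfies the subtangential condition
\[
\lim_{h\to 0^+}\frac{1}{h}\,\operatorname{dist}\bigl(\varphi+h\,\tilde{\mathcal{K}}(t,\varphi),\,X_{\beta-1}^+\bigr)=0\qquad\text{for all }\varphi\in X_\alpha^+,\ t\geq 0.
\]
Since $X_\alpha^+=X^+\cap X_\alpha$ and $X_{\beta-1}^+\cap X=X^+$ (Definition \ref{Positive cones}), and since $\tilde{\mathcal{K}}(t,\varphi)\in X$ whenever $\varphi\in X_\alpha$, this reduces to verifying, for each $i$, a pointwise quasipositivity condition: if $\varphi\in X_\alpha^+$ and $\varphi_i(x_0)=0$ at a point $x_0\in\overline{\Omega}$, then the $i$-th component of $\tilde{\mathcal{K}}(t,\varphi)$ is $\geq 0$ there (in a suitable a.e./boundary sense). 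The first step is therefore to reformulate the problem as checking this quasipositivity componentwise.

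Next I would split $\tilde{\mathcal{K}}=\mathcal{K}+(\lambda-\mathcal{A}_{\beta-1})\mathcal{D}\mathcal{M}$ and treat the two pieces separately. For the reaction part $\mathcal{K}$ from \eqref{The nonlinear term}: fix $i$ and suppose $\varphi_i(x_0)=0$ with all $\varphi_j\geq 0$. Every linear outflow term in $\mathcal{K}_i$ carries the factor $\varphi_i$ and hence vanishes at $x_0$; every other linear term ($\gamma(t)\varphi_4$, $b_3\varphi_3$, $b_4\varphi_2$, $b_2\varphi_1$, $c_2\varphi_3$, $b_1\varphi_1$, $c_1\varphi_2$, $\phi(t)\varphi_3$) is a nonnegative coefficient times a nonnegative density, hence $\geq 0$. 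The imitation terms $\mathcal{F},\mathcal{G},\mathcal{H}$ from \eqref{F}--\eqref{H} each contain a product of the two densities involved; one checks case by case (e.g.\ in the $\rho_1$ equation the terms $-\mathcal{F}(\varphi_1,\varphi_3)$ and $-\mathcal{G}(\varphi_1,\varphi_2)$ both carry the factor $\varphi_1$ and vanish at $x_0$ when $\varphi_1(x_0)=0$; in the $\rho_2$ equation $+\mathcal{G}(\varphi_1,\varphi_2)$ carries $\varphi_2$ and vanishes, while $-\mathcal{H}(\varphi_2,\varphi_3)$ also carries $\varphi_2$; similarly for $\rho_3$ with $+\mathcal{F}(\varphi_1,\varphi_3)+\mathcal{H}(\varphi_2,\varphi_3)$, both carrying $\varphi_3$). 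The divergence advection terms $-\nabla\cdot(\varphi_i V_i(\varphi)\vec\nu)$ for $i=2,3$ need the spatial-derivative handling that is the reason the domain of $\mathcal{K}$ is $X_\alpha\hookrightarrow C^1(\overline\Omega)^5$: at an interior minimum point $x_0$ of $\varphi_i$ (where $\varphi_i(x_0)=0$) one has $\nabla\varphi_i(x_0)=0$, so $-\nabla\cdot(\varphi_i V_i\vec\nu)(x_0)=-\varphi_i(x_0)\,\nabla\cdot(V_i\vec\nu)(x_0)=0$. Hence each $\mathcal{K}_i$ is quasipositive.

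For the boundary correction $(\lambda-\mathcal{A}_{\beta-1})\mathcal{D}\mathcal{M}\varphi$ I would argue via the Dirichlet map: $v=\mathcal{D}\mathcal{M}\varphi$ solves $(\lambda-\mathcal{A})v=0$ in $\Omega$ with $\mathcal{L}v=\mathcal{M}\varphi$ on $\partial\Omega$. On $\Gamma_1$, $\mathcal{M}\varphi=0$; on $\Gamma_2$ the $i$-th component of $\mathcal{M}\varphi$ is $\varphi_i(V_i(\varphi)-v_{i,\text{out}})$ — this is $\varphi_i$ times a bounded (possibly sign-indefinite) factor — but the key observation is the same as for $\mathcal{K}$: wherever $\varphi_i(x_0)=0$ the $i$-th boundary datum vanishes too, so the relevant contribution to the subtangential condition is controlled. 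I then invoke the positivity/order properties of the resolvent $R(\lambda,\mathcal{A}_0)$ and the structure of the Dirichlet operator (as in Martin \& Smith, Section 2, and Greiner/Desh) to conclude that the full nonlinearity $\tilde{\mathcal{K}}$ satisfies the subtangential condition on $X_\alpha^+$. The anticipated main obstacle is precisely this last step: making the subtangency rigorous at the level of the extrapolation space $X_{\beta-1}$ — one must justify that the (unbounded) operator $(\lambda-\mathcal{A}_{\beta-1})\mathcal{D}\mathcal{M}$, composed with the Dirichlet map, preserves enough order structure for the distance-to-cone estimate to hold, and that evaluating $\tilde{\mathcal{K}}(t,\varphi)$ "pointwise" as written before Theorem \ref{Positivity} is legitimate. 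Once the subtangential condition is established, the abstract invariance theorem yields a (local, maximal) solution starting from $u_0\in X_\alpha^+$ that stays in $X_{\beta-1}^+$; intersecting with $X_\alpha$ via Theorem \ref{Theorem maximal existence} and the identity $X_{\beta-1}^+\cap X=X^+$ gives $u(t)\in X_\alpha^+$ for all $t\in[0,T(u_0))$, and uniqueness/regularity are inherited from Theorem \ref{Theorem maximal existence}.
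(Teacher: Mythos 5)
Your proposal follows essentially the same route as the paper: positivity of the (extrapolated) semigroup from Propositions \ref{Proposition:3.2}--\ref{Proposition:3.5}, the Martin--Smith subtangential condition for the cone $X_{\beta-1}^{+}$, reduction to a pointwise quasipositivity check on $\tilde{\mathcal{K}}$ (which the paper handles more tersely via \cite[Remark 1.2]{Martin-Smith} and the positivity of $\sup_{\omega>0}\omega^{\beta}R(\omega,\mathcal{A}_{-1}-\lambda)$), and a lift of the pointwise estimate to the extrapolated norm, which the paper carries out explicitly with the Euclidean projection $\pi_{\Lambda}$ and an equivalent $p$-norm on $X_{\beta-1}$. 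Your componentwise verification of quasipositivity (including the advection and boundary terms) is a more detailed version of what the paper asserts, so the two arguments coincide in substance.
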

\begin{proof} From Proposition \ref{Proposition:3.2}, it is clear that $ \mathcal{T}(t) X_{\alpha}^{+} \subset  X_{\alpha}^{+} $ for all $ t\geq 0 $.
% To show that $ \mathcal{T}(t) X^{\Lambda_{0,1}} \subset  X^{\Lambda_{0,1}} $ for all $ t\geq 0 $ we need to prove that for any $ \varphi \in  X^{\Lambda_{0,1}} $ we have $ \mathcal{T}(t) \varphi (x) \leq 1 $ for all $ x\in \overline{\Omega}, \  t\geq 0 $. Let $ \varphi \in  X^{\Lambda_{0,1}} $. Since $ (\mathcal{T}(t))_{t\geq 0} $ is of contraction it follows that 
%\begin{eqnarray*}
%\| \mathcal{T}(t) \varphi \| \leq 1 \Leftrightarrow \sum_{i=1}^{5}\| \mathcal{T}_{i}(t) \varphi_{i} \|_{\infty}  \leq 1.
%\end{eqnarray*}
%Then, we have 
%\begin{eqnarray*}
% \sum_{i=1}^{5} \mathcal{T}_{i}(t) \varphi_{i} (x) \leq 1 \quad \text{ for all } x\in \overline{\Omega}.
%\end{eqnarray*}
Let $ \varphi \in  X_{\alpha}^{+} $. So from \cite[Corollary 4]{Martin-Smith} it suffices to show that  
\begin{equation}
 \lim_{h\rightarrow 0} h^{-1}d(\varphi + h \tilde{\mathcal{K}}(t,\varphi);X_{\beta-1}^{+})=0 \quad \text{ for each } t \geq 0, \label{subtangential condition}
\end{equation}
where $d(z;X_{\beta-1}^{+}):=\inf_{x \in X_{\beta-1}^{+}} \| z-x\|_{\beta -1}$ is the distance of a point $z\in X_{\beta-1}$ from the subset $X_{\beta-1}^{+}$. 
 First, we prove (pointwisely) that   
\begin{equation}
 \lim_{h\rightarrow 0} h^{-1}d(\sup_{\omega >0}\omega^{\beta}(R(\omega, \mathcal{A}_{-1}-\lambda)\left( \varphi(x) + h [\tilde{\mathcal{K}}(t,\varphi)](x)\right) ;[0,+\infty))=0 \quad \text{ for each } t \geq 0, \, a.e. \ x\in \Omega. \label{subtangential condition pointwise}
\end{equation}
%The convexity of the mapping $ h\longmapsto d(x+hy,\Lambda_{a,b}) $ along with the continuity of $\tilde{\mathcal{K}}$ yields that the limit \eqref{subtangential condition pointwise} holds on compact sets of $ [0,+\infty)\times \overline{\Omega} $. Therefore
Then, the formula \eqref{subtangential condition pointwise} holds since the transformation $\sup_{\omega >0} \omega^\beta R(\omega, \mathcal{A}_{-1}-\lambda) $ preserves the positivity, and due to \cite[Remark 1.2]{Martin-Smith} by the fact that 
$\mathcal{K}_{i}(t,0)\geq 0$  and $[(\omega-\mathcal{A}_{-1})  \mathcal{D} \mathcal{M}]_{i}0= 0$ for all $ t\geq 0 $ which gives that $\tilde{\mathcal{K}}_{i}(t,0)\geq 0$.
Note that the operator $ R(\omega, \mathcal{A}_{-1}-\lambda) $ is positive (see \cite{Batkai}) which yields the positivity of $\sup_{\omega >0} \omega^\beta R(\omega, \mathcal{A}_{-1}-\lambda) $. Hence, we aim to prove that \eqref{subtangential condition} holds. Let $\mid \cdot \mid_p$ be the $p$-norm in $\mathbb{R}^5$ defined as $ | (x_1,\cdots,x_5)^*|_p=(\sum_{k=1}^5 |x_i|^p)^{\frac{1}{p}}$.
Then, for $ \varphi \in X $, the norm $$ \|\varphi\|_{p}:= (\int_{x\in \Omega} \mid \varphi(x)\mid_{p} dx)^{\frac{1}{p}}$$ is equivalent to the norm on $X$. This is due the fact that all the norms in $\mathbb{R}^5$ are equivalent. Similarly, using this new norm $ \| \cdot \|_p$, we can define an associated equivalent norm for $X_{\alpha}$, and the new equivalent norm on $ X_{\beta-1} $ which is given by $$\| \cdot\|_{\beta-1,p}= \sup_{\omega >0}\| \omega^{\beta} R(\omega,\lambda-\mathcal{A}_{-1})\cdot\|_{p}.$$
 Let us define the Euclidean projection onto $[0,+\infty)$, $  \pi_{\Lambda}:\mathbb{R}^{5}\longrightarrow \Lambda_{0,+\infty}$, by 
$$ \mid x -  \pi_{\Lambda} x\mid=d(x,[0,+\infty)).  $$
Notice that the mapping $ \pi_{\Lambda} $ is well-defined and continuous on $ \mathbb{R}^n$ (eventually it is $1$-Lipschitz continuous).
Let $ \varepsilon >0$ and let $h>0$. Define 
$$ \varphi_{h}(x):= \pi_{\Lambda} (\varphi(x) + h [\tilde{\mathcal{K}}(t,\varphi)](x) ) \quad \text{for } t \geq 0, \, x\in \Omega.$$
So, $  \varphi_{h} \in X_{\beta-1}^{[0,+\infty)} $ and
\begin{align}
d(\varphi + h \tilde{\mathcal{K}}(t,\varphi);X_{\beta-1}^{[0,+\infty)})^{p} &\leq 
\| \varphi + h \tilde{\mathcal{K}}(t,\varphi)-\varphi_{h} \|^p_{\beta-1} \nonumber \\
&\leq \sup_{\omega >0}\omega^{\beta}  \int_{x\in \Omega}\mid   R(\omega,\lambda-\mathcal{A}_{-1})\left( \varphi(x) + h [\tilde{\mathcal{K}}(t,\varphi)](x)-\varphi_{h}(x)\right)  \mid_{p}^{p}dx \nonumber\\
& = \int_{x\in \Omega}  d(\sup_{\omega >0} \omega^{\beta} R(\omega,\lambda-\mathcal{A}_{-1})\left(\varphi(x) + h [\tilde{\mathcal{K}}(t,\varphi)](x)\right) ;\Lambda_{0,+\infty} )^p dx \label{Estim distance}
\end{align}
Moreover, for $ 0<h \leq \delta $, for some $\delta>0$, it follows in view of the convexity of the operator distance, by the continuity of $\tilde{\mathcal{K}}$ and using  \eqref{subtangential condition pointwise}, that 
%$$  \sup_{x\in \overline{\Omega}} d( \varphi(x) + h [\tilde{\mathcal{K}}(t,\varphi)](x);\Lambda_{0,+\infty} ) \leq h \varepsilon . $$ 
 $$  \int_{\Omega} d(\sup_{\omega >0}\omega^{\beta}(R(\omega, \mathcal{A}_{-1}-\lambda)\left(  \varphi(x) + h [\tilde{\mathcal{K}}(t,\varphi)](x)\right) ;[0,+\infty) )^{p} dx\leq |\Omega | (h \varepsilon)^{p} . $$
Hence, by \eqref{Estim distance} we have $$ d(\varphi + h \tilde{\mathcal{K}}(t,\varphi);X_{\beta-1}^{+}) \leq |\Omega|^{\frac{1}{p}} h \varepsilon \quad \text{for all }  0<h \leq \delta ,$$ 
which proves the result.
\end{proof}
\subsection{$L^1$-boundedness}
In this Section, we show the $ L^1 $-boundedness of the total population density of our model \eqref{eq:main1System APC21}-\eqref{initial condition}. Let $ u_{0} \in X_{\alpha}^{+} $ and let $u(t,u_0)=(\rho_1(t,\cdot),\cdots,\rho_5(t,\cdot))^{*}$ for all $ t\in [0,T(u_0)) $ be the corresponding maximal solution. It is clear, from Theorem \ref{Theorem maximal existence}, that $u(\cdot,u_0) \in X_{\alpha}\hookrightarrow C^{1}(\overline{\Omega})^5\hookrightarrow L^1(\Omega)^5$. This means that each $\rho_i$ for $ i=1,\cdots,5 $, is bounded with respect to $\Omega$ and then it is $L^1(\Omega)$. Hence, the following mapping: 
$$ t\in [0,T(u_0)) \longmapsto U(t):= \int_{\Omega} \left[ \rho_1(t,x)+\cdots+\rho_5(t,x)\right] dx \in \mathbb{R}, $$ 
is well-defined.
Furthermore, we have
\begin{proposition}[$L^1$-boundedness]

$$ 0\leq U(t) \leq 1 \quad \text{ for all   } t\in  [0,T(u_0 ) ).$$
\end{proposition}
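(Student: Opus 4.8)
The plan is to establish the lower and upper bounds for $U(t)$ separately. The lower bound $U(t)\geq 0$ is immediate from the positivity result Theorem \ref{Positivity}: since $u_0\in X_\alpha^+$, the maximal solution satisfies $u(t)\in X_\alpha^+$ for all $t\in[0,T(u_0))$, so each $\rho_i(t,x)\geq 0$ a.e.\ in $\Omega$, whence the integrand $\rho_1(t,x)+\cdots+\rho_5(t,x)$ is nonnegative and $U(t)\geq 0$.

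For the upper bound, the idea is to differentiate $U$ in time, integrate the PDE system \eqref{eq:main1System APC21} over $\Omega$, and exploit two structural features: first, the reaction terms $f_i$ (equivalently the nonlinearities $\mathcal{K}_i$ minus the advection divergences) sum to something nonpositive — inspecting \eqref{eq:main1System APC21}, the linear transition terms cancel pairwise except for the loss terms $-\delta_1\rho_1-\delta_2\rho_2-\delta_3\rho_3$ (the mortality terms, which left the system when we dropped the $\rho_6$ equation) and the imitation terms $\mathcal{F},\mathcal{G},\mathcal{H}$ also cancel in the sum; so $\sum_i f_i = -\delta_1\rho_1-\delta_2\rho_2-\delta_3\rho_3\leq 0$ by positivity. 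Second, the divergence terms $\nabla\cdot(d_i\nabla\rho_i)$ and $-\nabla\cdot(\rho_i\vec v_i(\rho))$ integrate, via the divergence theorem, to boundary integrals which are controlled by the boundary conditions \eqref{boundary conditions}: summing, $\int_\Omega \nabla\cdot(d_i\nabla\rho_i - \rho_i\vec v_i(\rho))\,dx = \int_{\partial\Omega}(d_i\nabla\rho_i - \rho_i\vec v_i(\rho))\cdot n\,d\sigma = -\int_{\partial\Omega}\rho_i v_{i,\text{out}}\,\vec\nu\cdot n\,d\sigma = -\int_{\Gamma_2}\rho_i v_{i,\text{out}}\,d\sigma \leq 0$, again using positivity of $\rho_i$, that $v_{i,\text{out}}\geq 0$, and that $\vec\nu\cdot n = 1$ on $\Gamma_2$ while $\vec\nu = 0$ on $\Gamma_1$. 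Combining, we obtain the differential inequality $U'(t)\leq 0$, so $U(t)\leq U(0) = \int_\Omega(\theta(x)+0+0+0+0)\,dx = 1$, which is the claim.

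The main technical obstacle is justifying the formal computation: $U'(t) = \int_\Omega \partial_t(\sum_i\rho_i)\,dx$ requires differentiating under the integral sign, and the divergence-theorem step requires enough regularity of $\rho_i(t,\cdot)$ up to the boundary. Here one leans on the regularity from Theorem \ref{Theorem maximal existence}: for $t>0$ the solution lies in $X_\beta\hookrightarrow C^1(\overline\Omega)^5$ and $u\in C^1((0,T(u_0)),X_\beta)$, so all the integrals and the boundary flux integrals are classically meaningful and the fundamental theorem of calculus applies on any $[\varepsilon,t]\subset(0,T(u_0))$; one then passes $\varepsilon\to 0^+$ using continuity of $U$ at $0$ (which follows from $u\in C([0,T(u_0)),X_\alpha)$ and the embedding into $L^1(\Omega)^5$). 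An alternative, perhaps cleaner, route is to test the abstract equation \eqref{eq:main1System APC211} against the constant function $\mathbf 1\in L^{p'}(\Omega)^5$, using that $\mathbf 1$ annihilates the extrapolated Laplacian part after accounting for the Dirichlet-map correction; but the direct PDE computation above is more transparent. Either way, once the differentiation and integration by parts are licensed, the estimate $U'(t)\leq 0$ and hence $0\leq U(t)\leq 1$ follows immediately.
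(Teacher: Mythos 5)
Your proposal is correct and follows essentially the same route as the paper: positivity gives $U(t)\geq 0$, and the upper bound comes from summing the five equations, noting that all transition and imitation terms cancel leaving only $-\sum_{i=1}^{3}\delta_i\rho_i$, and applying the Green--Ostrogradski formula together with the boundary conditions to reduce the flux terms to $-\sum_i v_{i,\mathrm{out}}\int_{\Gamma_2}\rho_i\,d\sigma\leq 0$, whence $U'\leq 0$ and $U(t)\leq U(0)=1$. Your additional remarks on justifying the differentiation under the integral sign via the $C^1((0,T(u_0)),X_\beta)$ regularity are a welcome refinement of a point the paper passes over quickly.
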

\begin{proof}
By assumption on the initial condition, we have $$ U(0)=\int_{\Omega} \theta (x) dx=1 .$$
In otherwise, the positivity result in Theorem \ref{Positivity} gives $$ U(t) \geq 0  \quad \text{ for all   } t\in [0,T(u_0 ) ) .$$ 
Moreover, the mapping $ U $ is well-defined and it is continuously differentiable on $[0,T(u_0)) $. Hence, we show that 
 $$ \dfrac{d}{dt}U(t) \leq 0, \quad t \in [0,T(u_0 ) ) . $$
Indeed, 
 % for each $x\in \Omega $,  \begin{equation}
%            \left\{
%                 \begin{aligned}
%                   \partial_t \tilde{\rho}&= \sum_{i=1}^{5} d_i \Delta \rho_i - \nabla(\rho_2 v(\rho))-\nabla(\rho_3 v(\rho)), \quad t\geq 0 & \\
%                   \tilde{\rho}(0) &=\rho_0 &
%                 \end{aligned}
%                 \right.
%\end{equation}
using the Green-Ostrogradski formula, we obtain that 
\begin{align*}
\dfrac{d}{dt}U(t)&=\int_{\Omega} \partial_t \left[\rho_1+\rho_2+\rho_3+\rho_4+\rho_5 \right]  dx \\ &=\sum_{i=1}^{5} d_i\int_{\Omega}   \Delta \rho_i  dx - \int_{\Omega}  \nabla(\rho_2 \vec{v}_2(\rho))dx-\int_{\Omega}  \nabla(\rho_3 \vec{v}_3(\rho)) dx - \sum_{i=1}^{3} \delta_i \int_{\Omega}   \rho_i  \\
 &=\sum_{i=1}^{5} d_i\int_{\partial \Omega}  \nabla \rho_i \cdot n d \sigma(x) - \int_{\partial\Omega}  \rho_2 \vec{v}_2(\rho)\cdot n d\sigma(x)-\int_{\partial \Omega}  \rho_3 \vec{v}_3(\rho)\cdot n  \sigma(x) - \sum_{i=1}^{3} \delta_i \int_{\Omega}   \rho_i  dx\\
 &=-\sum_{i=1}^{5} v_{i,out}\int_{\Gamma_{2}}   \rho_i d\sigma(x) - \sum_{i=1}^{3} \delta_i \int_{\Omega}   \rho_i  dx \leq 0, \quad t\in [0,T(u_0 ) ).
\end{align*}
The last estimate is a consequence of the positivity of the terms $\rho_i$, $i=1,\cdots,5$. That is, $ U $ is decreasing and then, we have
$$ U(t) \leq U(0)=1 \quad \text{ for all   } t\in  [0,T(u_0 ) ).$$
\end{proof}

\section{Numerical Simulations}\label{sec:4} 
In this section, we present several numerical simulations for different population evacuation scenarios during a catastrophic event.  In order to highlight the behavior of populations during such an event, we study the case where there is no return to everyday life, which corresponds to the case where $$\phi(t)=0 \quad \text{ for all } t\geq 0, $$ and, for simplicity, we take $\gamma (t)=1$ for all $t\geq 0$. In this case the system \eqref{eq:main1System APC211} is not time-dependent (it is autonomous) and all the results of Section \ref{sec:3} still hold. \\
Here, the population is assumed to have a low risk culture, which means that most people tend to panic. All the parameters of the spatio-temporal APC model \eqref{eq:main1System APC21}-\eqref{initial condition} are set as in Table \ref{tab1}. 

\begin{table}[h!]
\begin{center}
\begin{tabular}{|R{2.5cm}||C{3.5cm}|}
\hline & \textbf{Parameters}  \\
\cline{2-2}              & $d_1=0.001$  \\
\cline{2-2}\textbf{Diffusion} & $d_2=0.05 $ \\
\cline{2-2}              & $d_3=0.01$  \\
\cline{2-2}              & $d_4= 0.01$ \\
\hline 
\cline{2-2}                     & $V_{2,max}=0.3$  \\
\cline{2-2} \textbf{Advection}     & $V_{3,max}= 0.2$ \\
\hline 
\cline{2-2}                       & $v_{1,out}=0.2$  \\
\cline{2-2} \textbf{Speed at the}   & $v_{2,out}=  0.1 $ \\
\cline{2-2}   \textbf{boundary }  & $v_{3,out}=0.3$  \\
\cline{2-2}                & $v_{4,out}= 0.2$ \\
\hline                                  
\end{tabular}
%\caption{Mon tableau}
%\label{tab1}
%\end{table}
%%%%%
%\begin{table}
\begin{tabular}{|R{2.5cm}||C{3.5cm}|}
\hline & \textbf{Parameters}  \\
\cline{2-2}                 & $\alpha_{13} =0.6$  \\
\cline{2-2}  \textbf{Imitation}  & $\alpha_{12} =0.7 $ \\
\cline{2-2}                 & $\alpha_{23}=0.6$  \\
\cline{2-2}                 & $\alpha_{32}= 0.7$ \\
\hline 
\cline{2-2}               & $c_1=0.1$  \\
\cline{2-2}               & $c_2= 0.4$ \\
\cline{2-2} \textbf{Intrinsic}  & $b_1=0.1$  \\
\cline{2-2}  \textbf{transitions}  & $b_2= 0.2$ \\
\cline{2-2}               & $b_3=0.001$  \\
\cline{2-2}               & $b_4= 0.001$ \\
\hline                                  
\end{tabular}
\caption{Table of parameter values. Here, we choose $d_1 <d_3=d_4 <d_2$, since the population in an alert state scarcely diffuse and the panic population is supposed to be the most diffusive one. Moreover, we are interested to consider a population with a low risk culture, so, for example, we take $c_2 >c_1$ and $b_2>b_1$, as in \cite{LanzaFede}.}
\label{tab1}
\end{center}
\end{table}

With regard to the diffusion process, we suppose that the crowd in a alert state hardly diffuses, since in a alert behavior, pedestrians are moving to look for information and to identify the hazard. Thus, the diffusion coefficient $d_1$ should be considered small compared to the ones of the other populations. Moreover, here it is assumed that the most diffusive population is the panic population, since in panic behavior pedestrians move randomly in different directions, and thus $d_2$ should be considered the largest diffusion coefficient.

Thus, for the diffusivity coefficients of the control, daily, and return-to-day behaviors, we assume $d_1<d_i <d_2$ for $i=3,4,5$.

We assume that the domain presents a target escape region (denoted by $\Gamma_{2}$), thus the desired direction vector is defined by \eqref{Desired direction} where $\vec{\nu}(x)_{|\Omega}=(\nu_{x_1} , \nu_{x_2} )'$ is given by:  
 \begin{equation}\label{Direction of the mouvment nu}
            \left\{
                 \begin{array}{l}
               \nu_{x_1} = -\dfrac{x_1-x_1^{p}}{\sqrt{(x_1-x_1^p)^{2}+(x_2-x_2^p)^{2}}},\\
                 \nu_{x_2} = -\dfrac{x_2-x_2^{p}}{\sqrt{(x_1-x_1^p)^{2}+(x_2-x_2^p)^{2}}},
                 \end{array}\right.
 \end{equation}
where $ (x_1^p,x_2^p) $ is a centered point in $\Gamma_{2}$ localized out from $\overline{\Omega}$, see Figure \ref{Direction}. So, the desired direction in both situations of control and panic are supposed to be the same which is given by $\vec{\nu}(x)$. For more details about the desired direction of pedestrians, we refer to the works of Hughes \cite{Hughes2002} and also to the references \cite{Coscia,Inria}.

%%%%%%%%%%%%% The direction of the movement 
\begin{center}
\begin{figure}[h!]
\includegraphics[width=7cm,height=4cm]{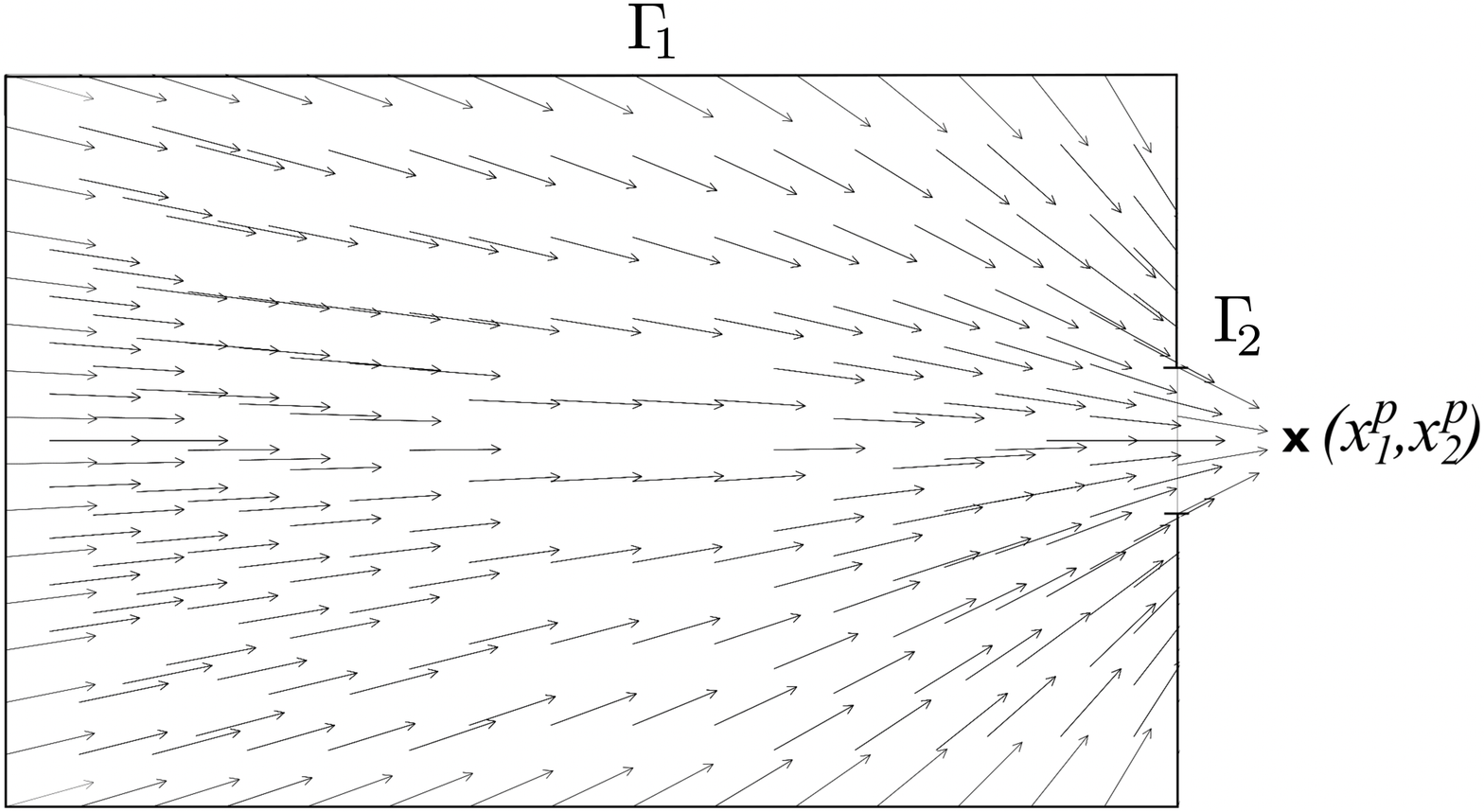}
\caption{The direction vector $\vec{\nu}(x_1,x_2)$ given in \eqref{Direction of the mouvment nu} describing the desired direction of pedestrians to reach the point $(x_1^p,x_2^p)$ which is located outside the domain $\overline{\Omega}$ since the population looks to escape from the exit $ \Gamma_{2} $ towards this point.}
\label{Direction}
\end{figure}
\end{center}

%%%%%%%%%%%%%%%%%%%%%%%%%%%%%%%%%%%%%%%%%%%%%%%
%%%%%%%%%%%% initial conditions
\begin{figure}[h!]
%\begin{center}
\begin{minipage}[c]{.3\linewidth}
\includegraphics[width=3.5cm,height=2cm]{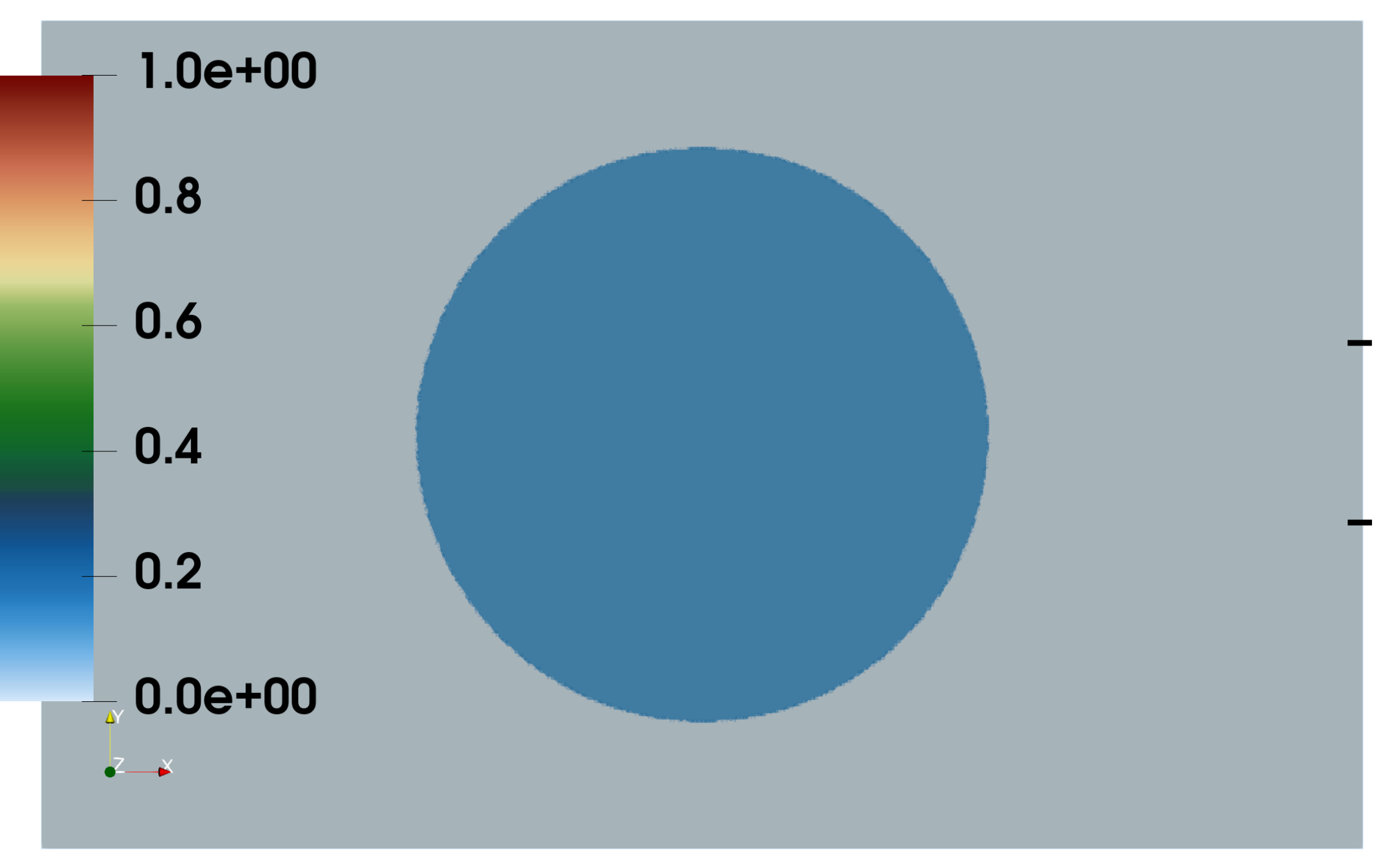}\\
\begin{center}
(a) Scenario 1
\end{center}
\end{minipage} 
\begin{minipage}[c]{.3\linewidth}
\includegraphics[width=3.5cm,height=2cm]{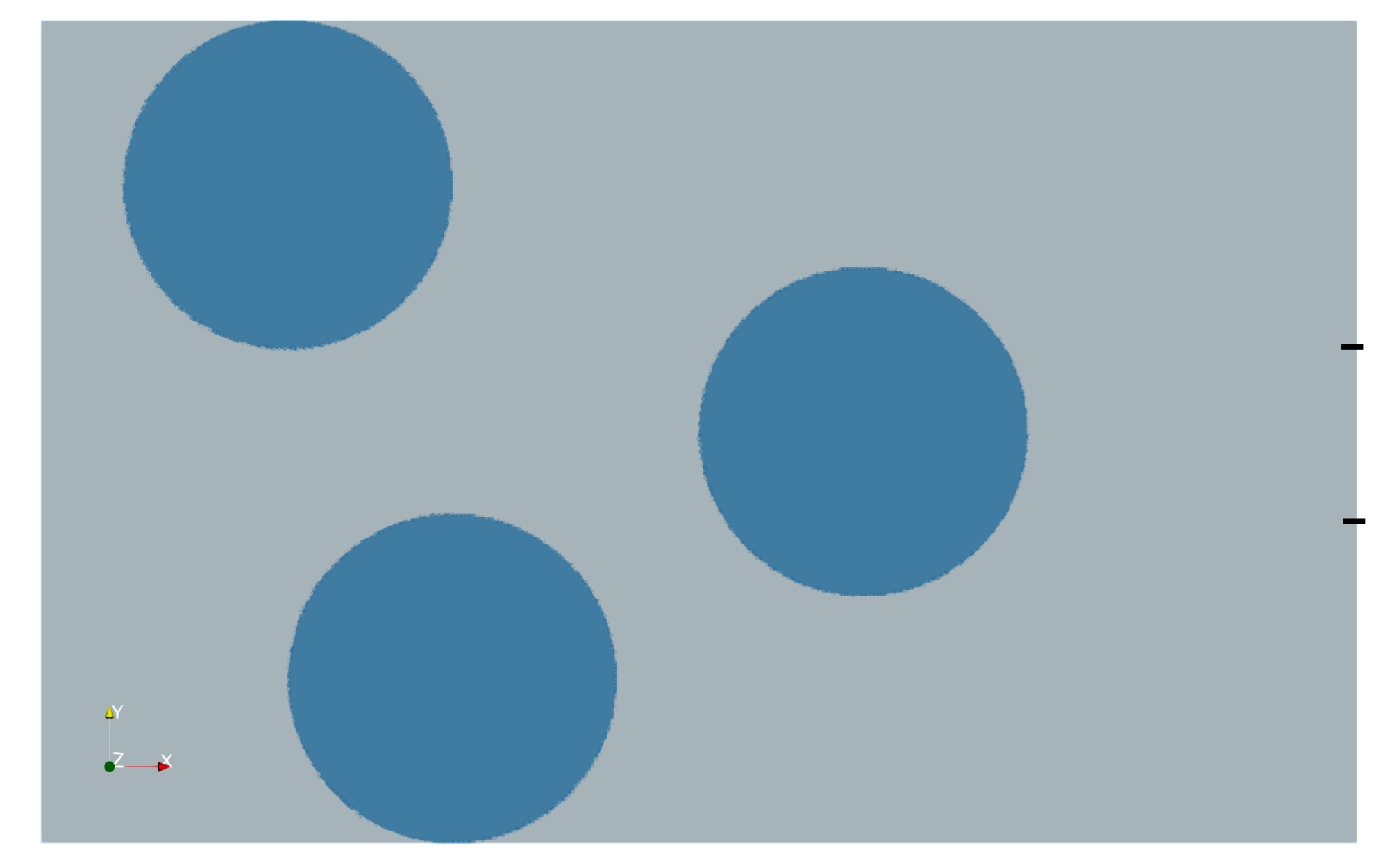}\\
\begin{center}
(b) Scenario 2
\end{center}
\end{minipage}
\begin{minipage}[c]{.3\linewidth}
\includegraphics[width=3.5cm,height=2cm]{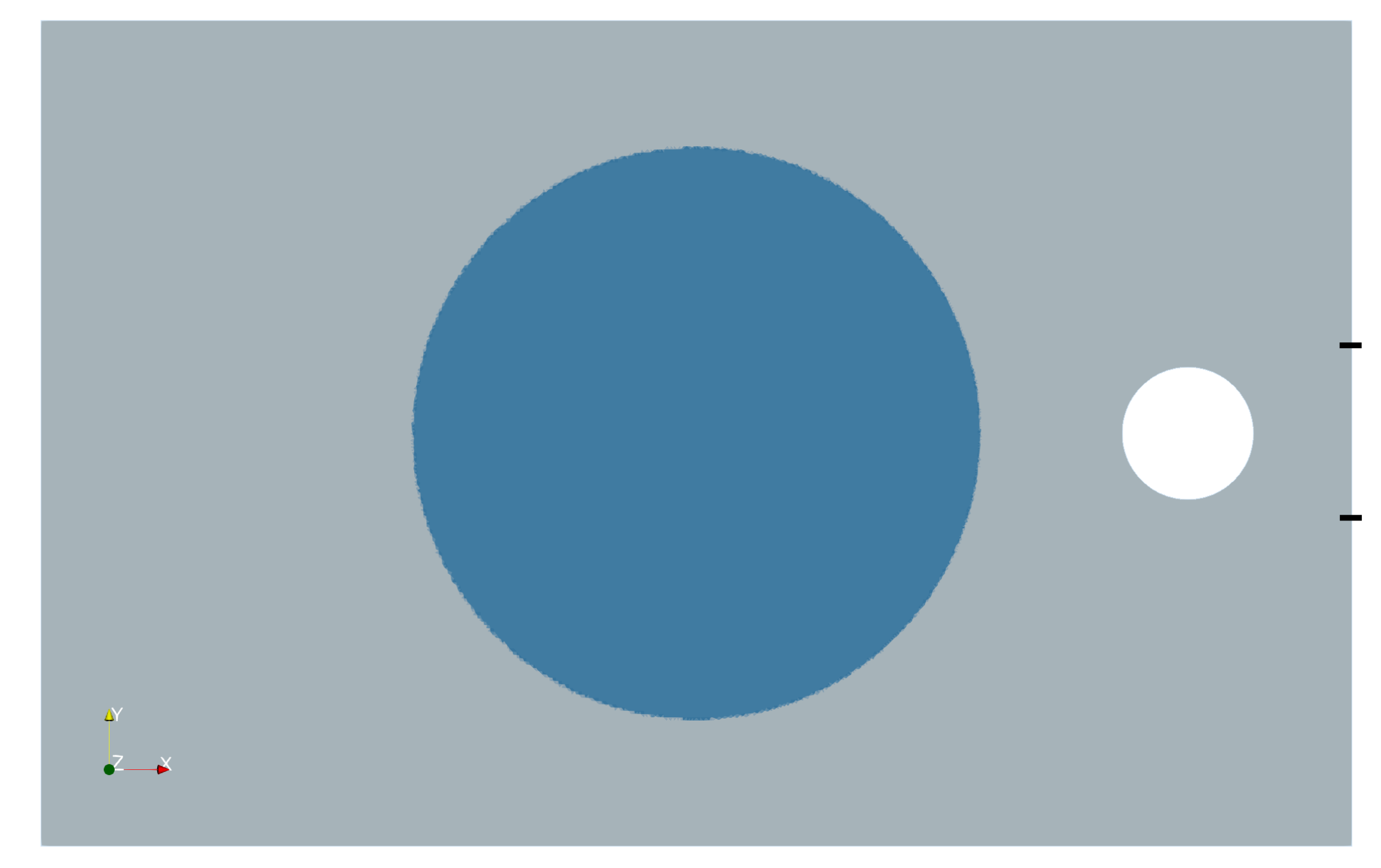}\\
\begin{center}
(c) Scenario 3
\end{center}
\end{minipage}
\caption{\small{\textbf{Initial conditions: }initial location of the population for each scenario: (a) the population is concentrated in a single group in the center of the domain; (b) the population is subdivided into three groups; (c) an obstacle is located between the exit and the population, which is concentrated in a single group within the domain. We recall that the exit is on the right of the domain, see Figure \ref{Direction}. }}
\label{The scenarios}
\end{figure}

In the following, we consider three different scenarios for the evacuation of a population whose aim is to escape by the unique exit $\Gamma_{2}$ (see Figure \ref{The scenarios}):\\ 
\textbf{Scenario 1: Evacuation of one centered cluster population.}
Here we consider a pedestrians population located in a single group within the domain.\\
% which creates a congestion at this exit.   \\
\textbf{Scenario 2: Evacuation of a population subdivided into three groups.}
Here we consider a population subdivided into three separated groups of pedestrians in different spatial localizations.\\
% which creates a more less congestion at the exit with respect to Scenario 1. \\
\textbf{Scenario 3: Evacuation of a population with an obstacle in front of the exit.} Here we take into account the situation in which an obstacle is located between the exit and the population concentrated in a single group in the center of the domain.\\
%The subject from the obstacle is to minimize the congestion at the exit and to make the evacuation more fluent. 

 %The initial distribution of population in the three scenarios is illustrated in Figure \ref{The scenarios}.

 First of all, in order to highlight the time evolution of the different human behaviors, namely, alert, panic, control and daily behaviors, in  Figure \ref{Others behaviors} we present  the simulation results of Scenario 1, since the dynamics is analogous in the other scenarios.
The more the color goes from light blue to dark red, the higher the population
density is.

 %%%%%%%%%%%%%%%%%%%%%% Simulations All Behaviors Scenario 1  %%%%%%%%%%%%%%%%%%%
%%%%%%%%%%%%%% Alert in S1 %%%%%%%%%%%%
\begin{figure}[h!]
%\begin{center}
\begin{minipage}[c]{.19\linewidth}
\begin{center}
$$t=50$$ $$(a_1)$$ 
\end{center}
\includegraphics[width=3cm,height=2cm]{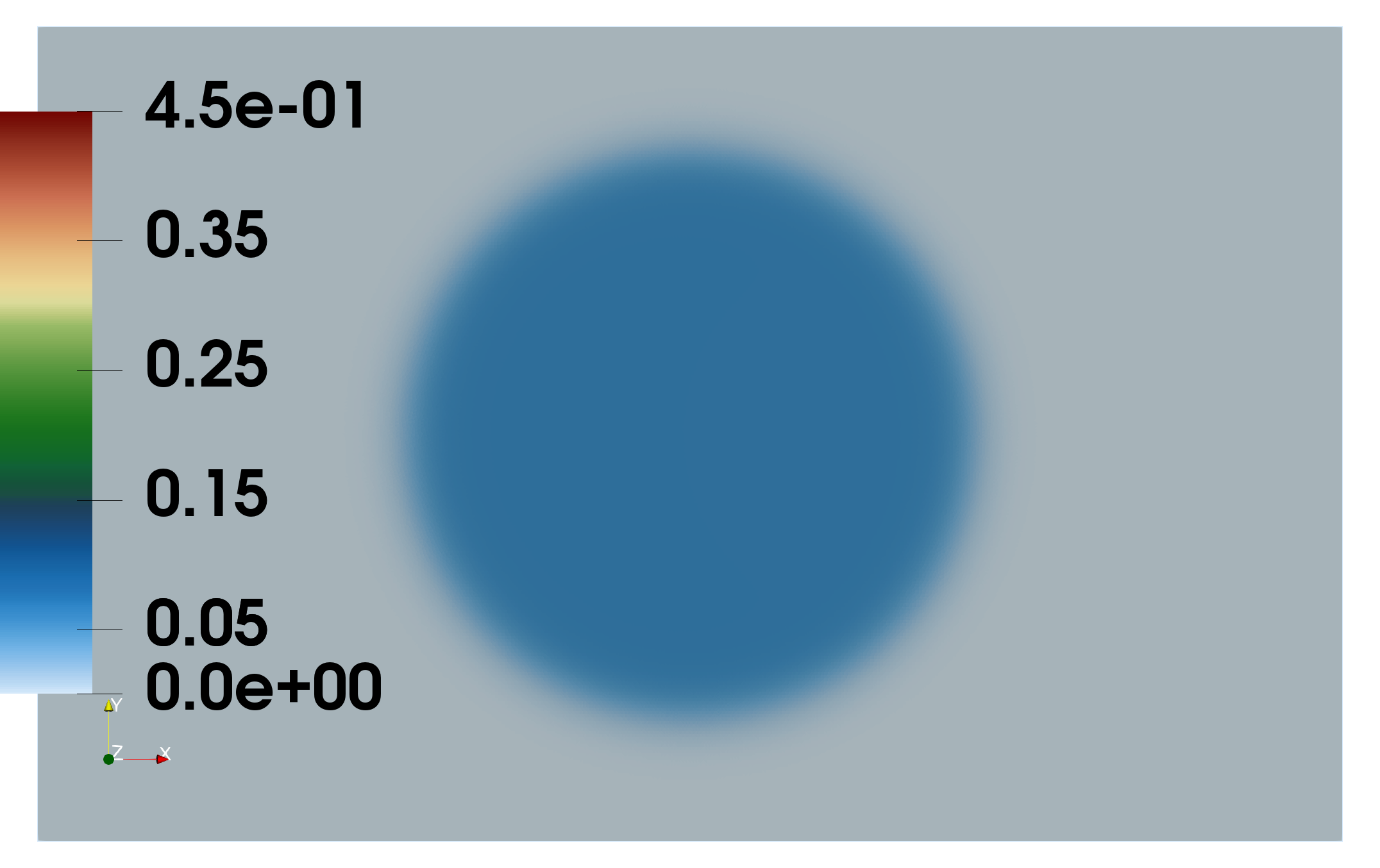}
%\begin{center}
%(a) Scenario 1
%\end{center}
\end{minipage}
%%%%%
\begin{minipage}[c]{.19\linewidth}
\begin{center}
$$t=100$$ $$(b_1)$$ 
\end{center}
\includegraphics[width=3cm,height=2cm]{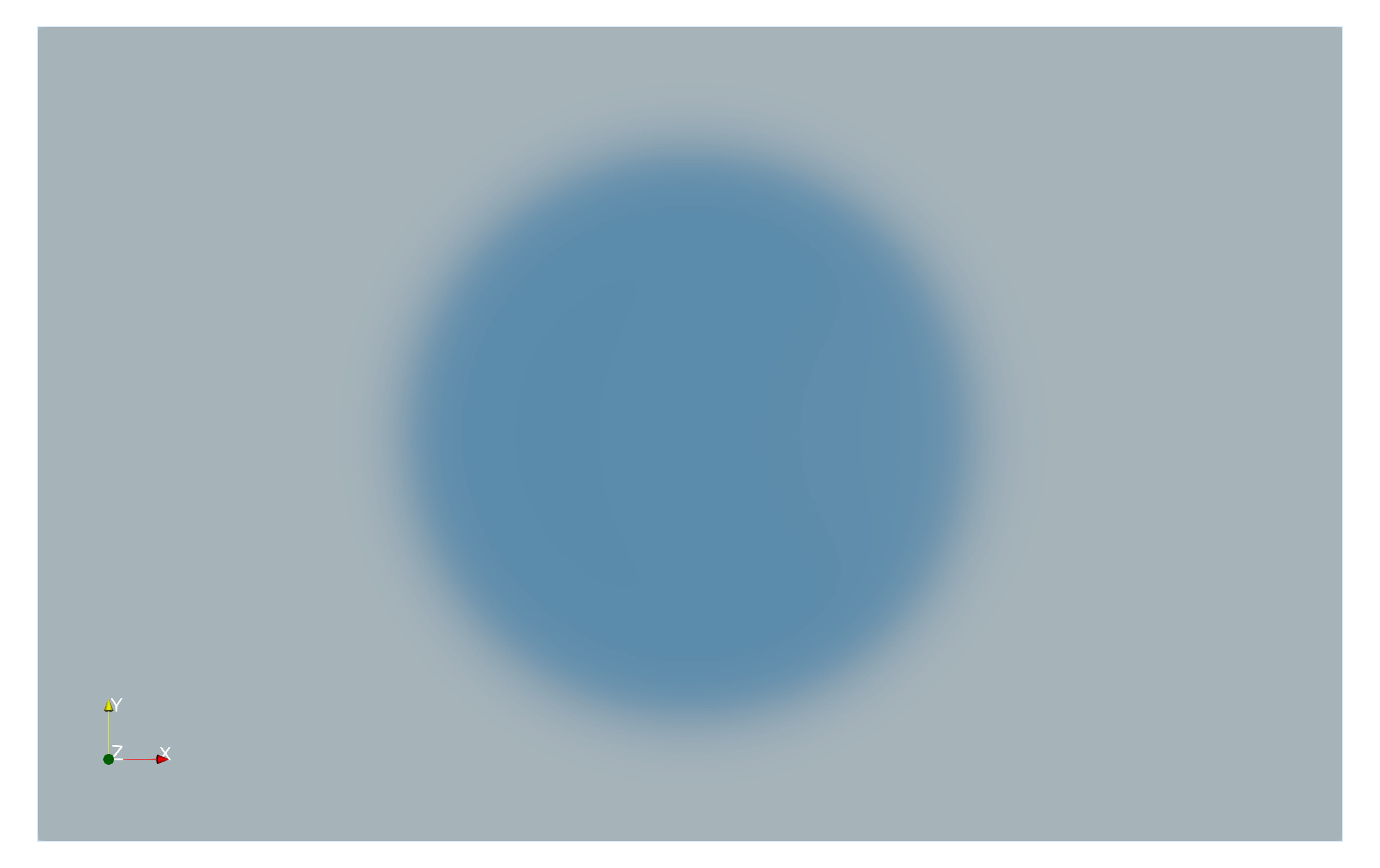}
%\begin{center}
%(b) Scenario 2
%\end{center}
\end{minipage}
%%%%
\begin{minipage}[c]{.19\linewidth}
\begin{center}
$$t=150$$ $$(c_1)$$ 
\end{center}
\includegraphics[width=3cm,height=2cm]{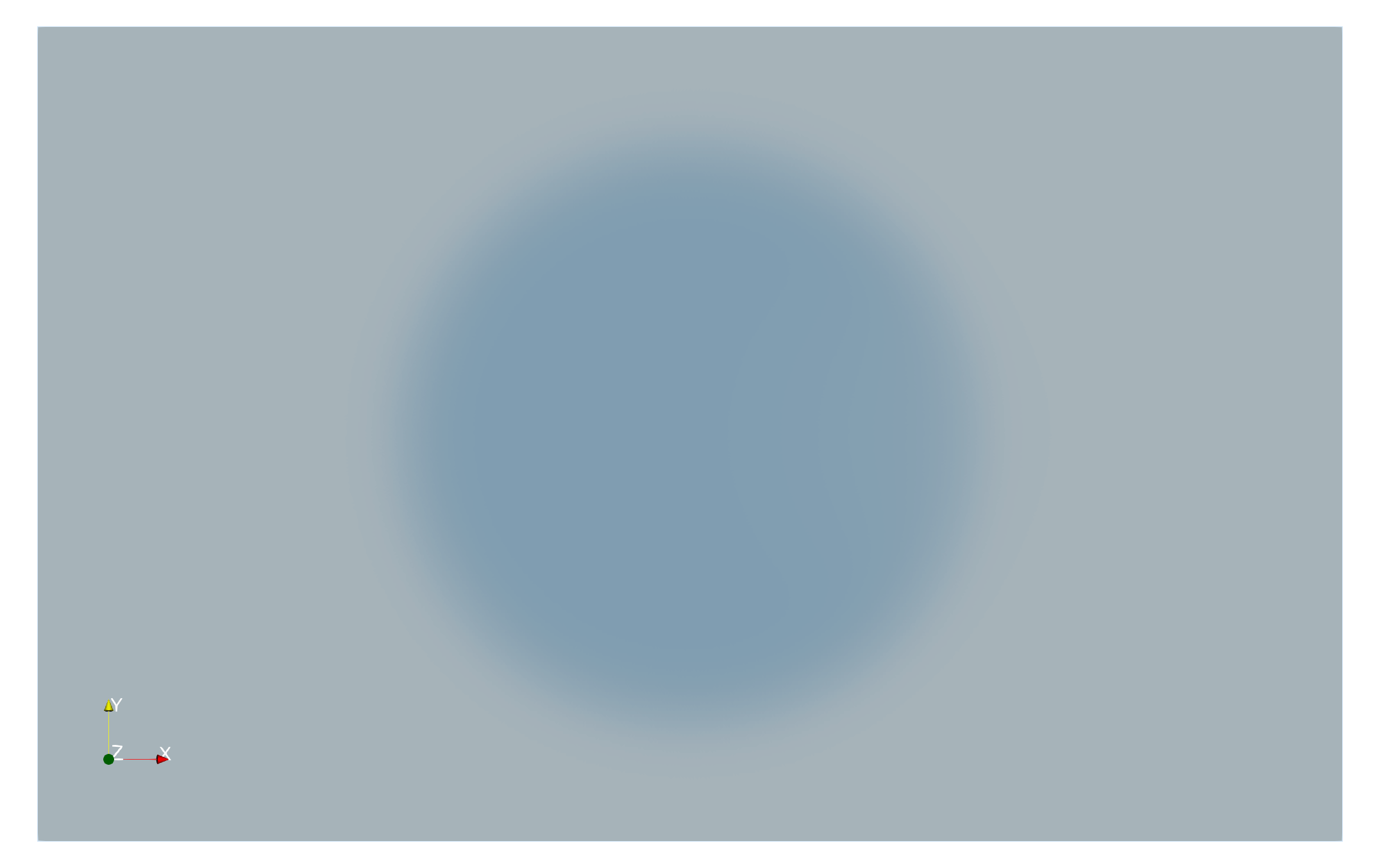}
%\begin{center}
%(c) Scenario 3
%\end{center}
\end{minipage}
%%%
\begin{minipage}[c]{.19\linewidth}
\begin{center}
$$t=200$$ $$(d_1)$$ 
\end{center}
\includegraphics[width=3cm,height=2cm]{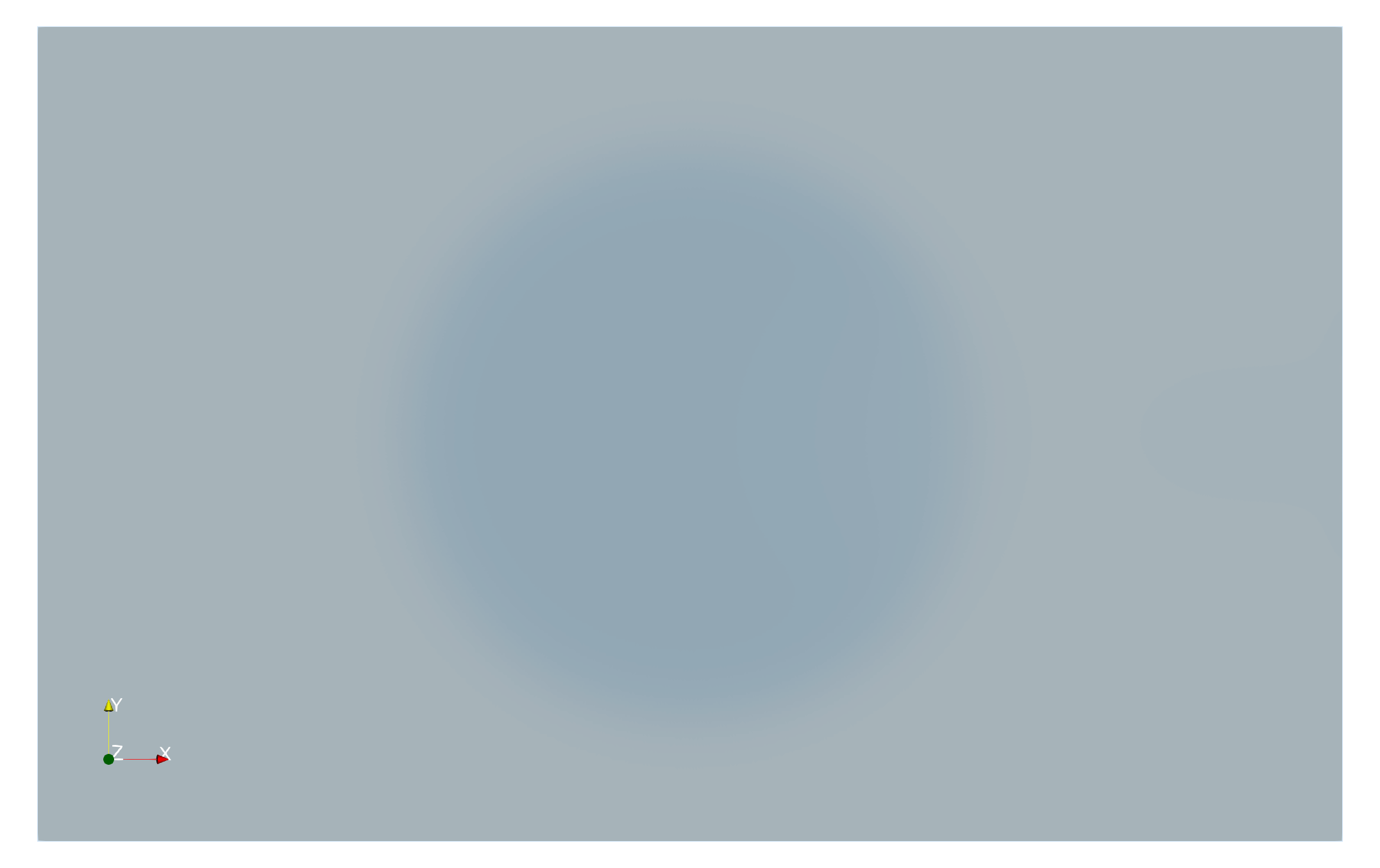}
%\begin{center}
%(c) Scenario 4
%\end{center}
\end{minipage}
%%%%%%
\begin{minipage}[c]{.19\linewidth}
\begin{center}
$$t=250$$ $$(e_1)$$ 
\end{center}
\includegraphics[width=3cm,height=2cm]{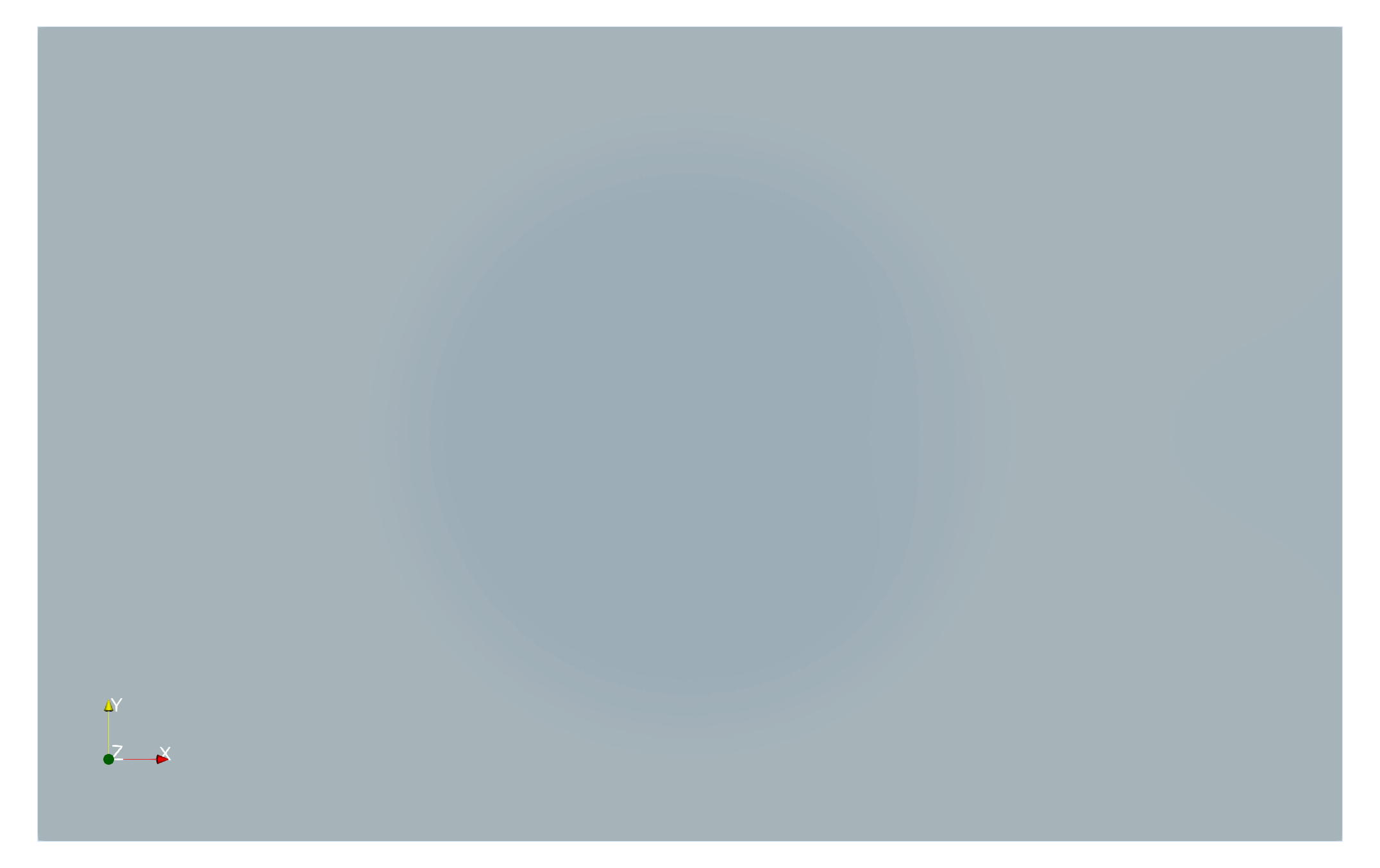}
\end{minipage}
Population in alert $\rho_1$
%%%%%%% Panic in S1 %%%%%%%
\begin{center}
\begin{minipage}[c]{.19\linewidth}
$$(a_2)$$ 
\includegraphics[width=3cm,height=2cm]{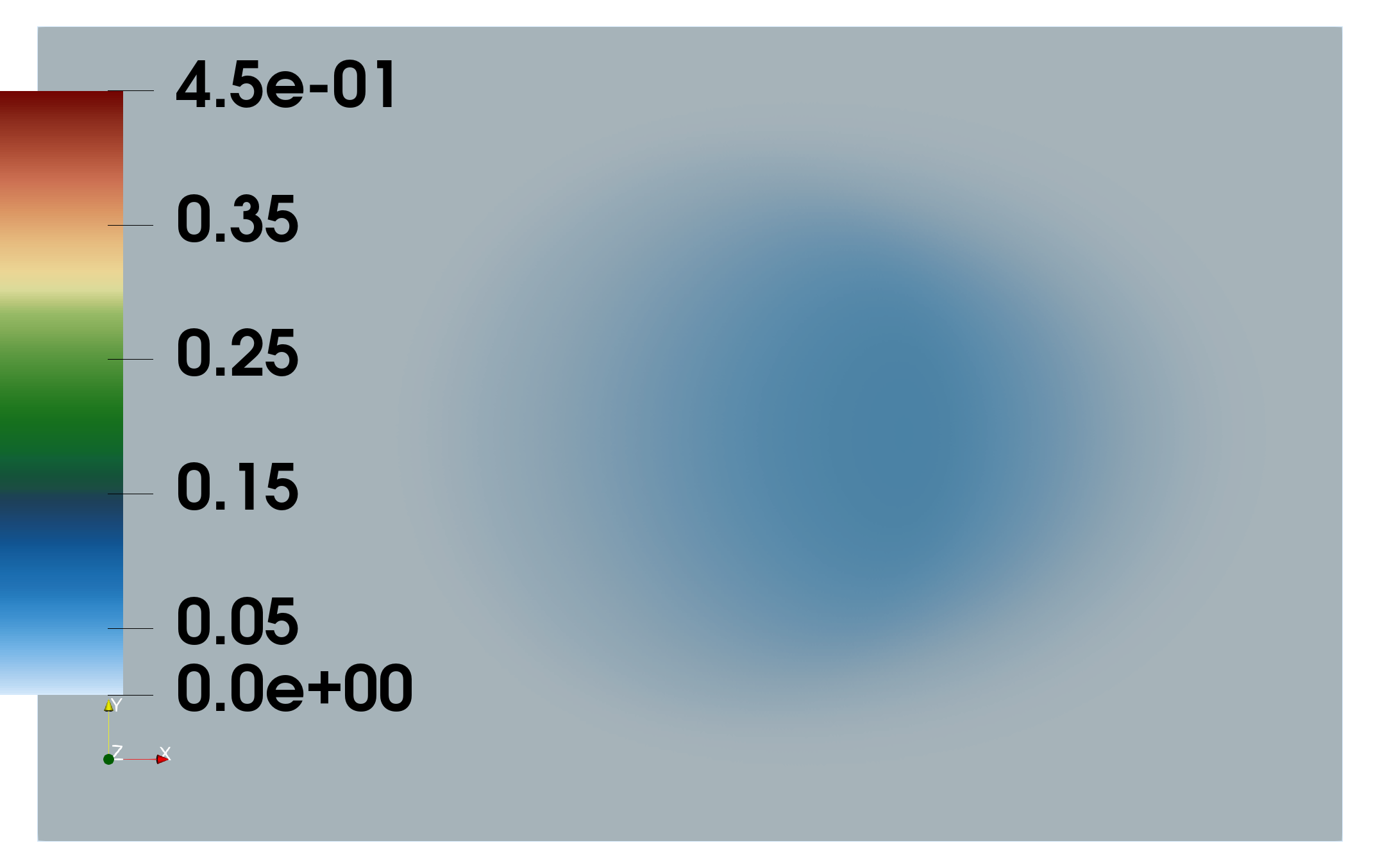}
%\begin{center}
%(a) Scenario 1
%\end{center}
\end{minipage}
%%%%%
\begin{minipage}[c]{.19\linewidth}
$$(b_2)$$ 
\includegraphics[width=3cm,height=2cm]{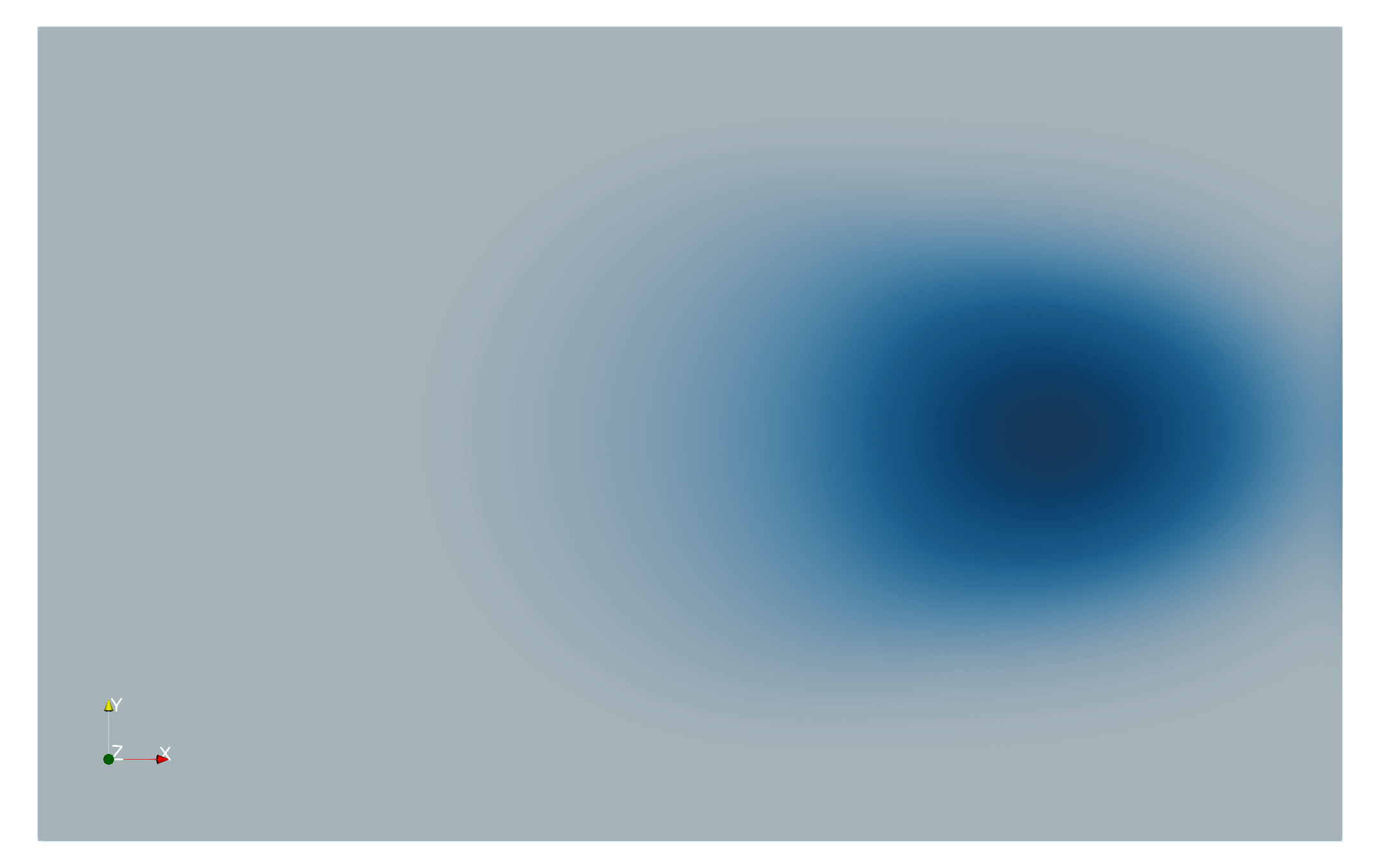}
%\begin{center}
%(b) Scenario 2
%\end{center}
\end{minipage}
%%%%
\begin{minipage}[c]{.19\linewidth}
$$(c_2)$$ 
\includegraphics[width=3cm,height=2cm]{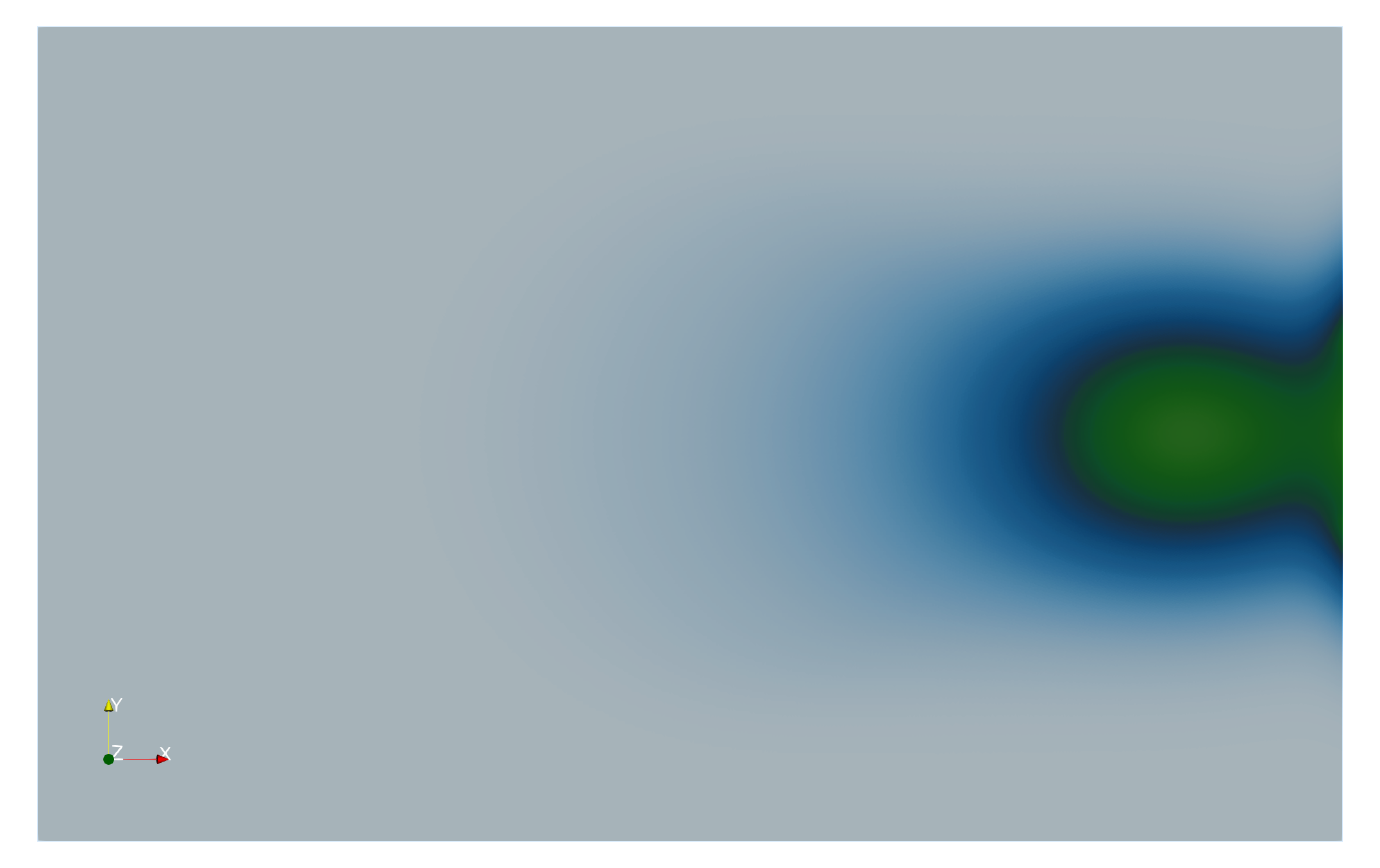}
%\begin{center}
%(c) Scenario 3
%\end{center}
\end{minipage}
%%%
\begin{minipage}[c]{.19\linewidth}
$$(d_2)$$ 
\includegraphics[width=3cm,height=2cm]{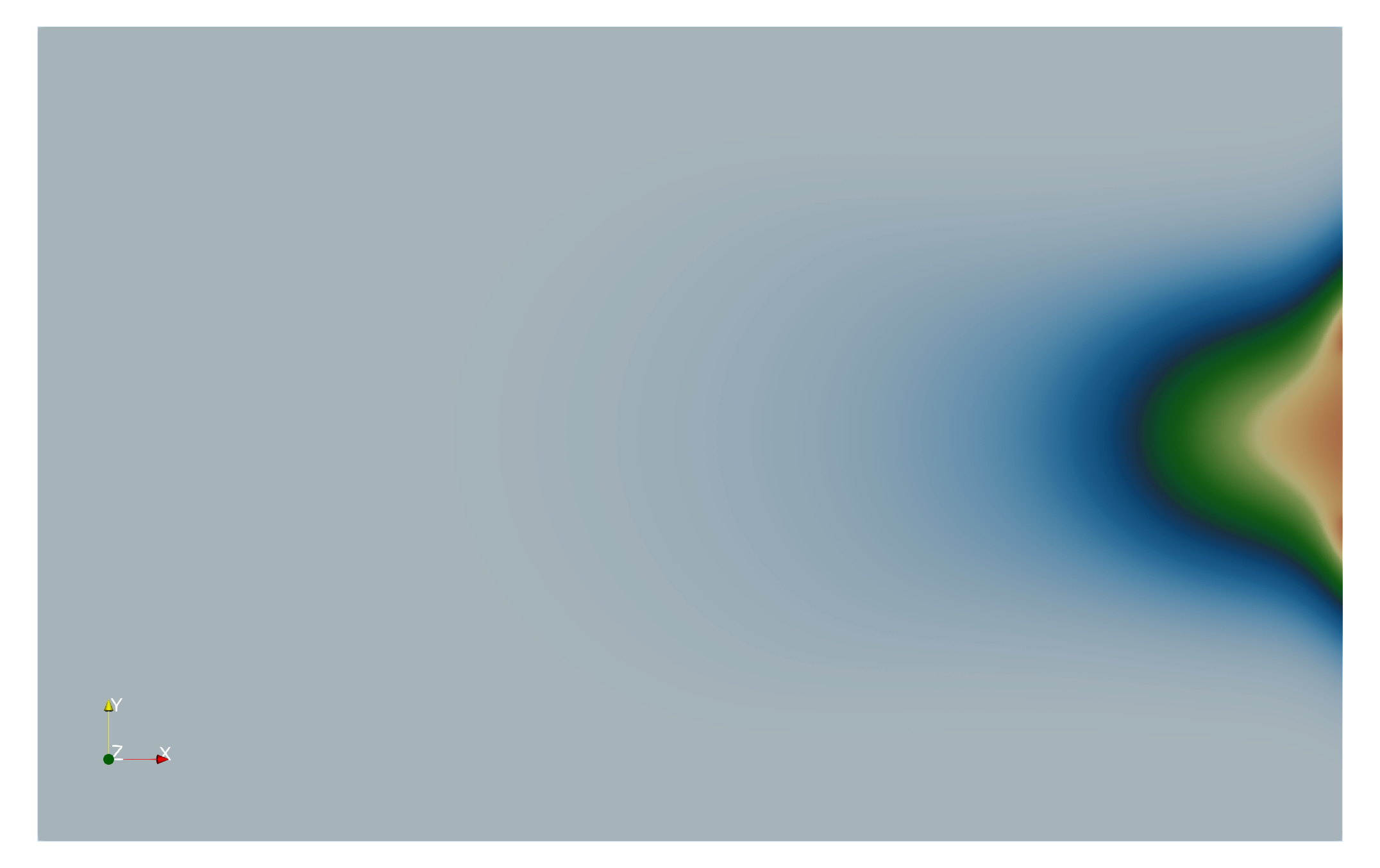}
%\begin{center}
%(c) Scenario 4
%\end{center}
\end{minipage}
%%%%%%
\begin{minipage}[c]{.19\linewidth}
$$(e_2)$$ 
\includegraphics[width=3cm,height=2cm]{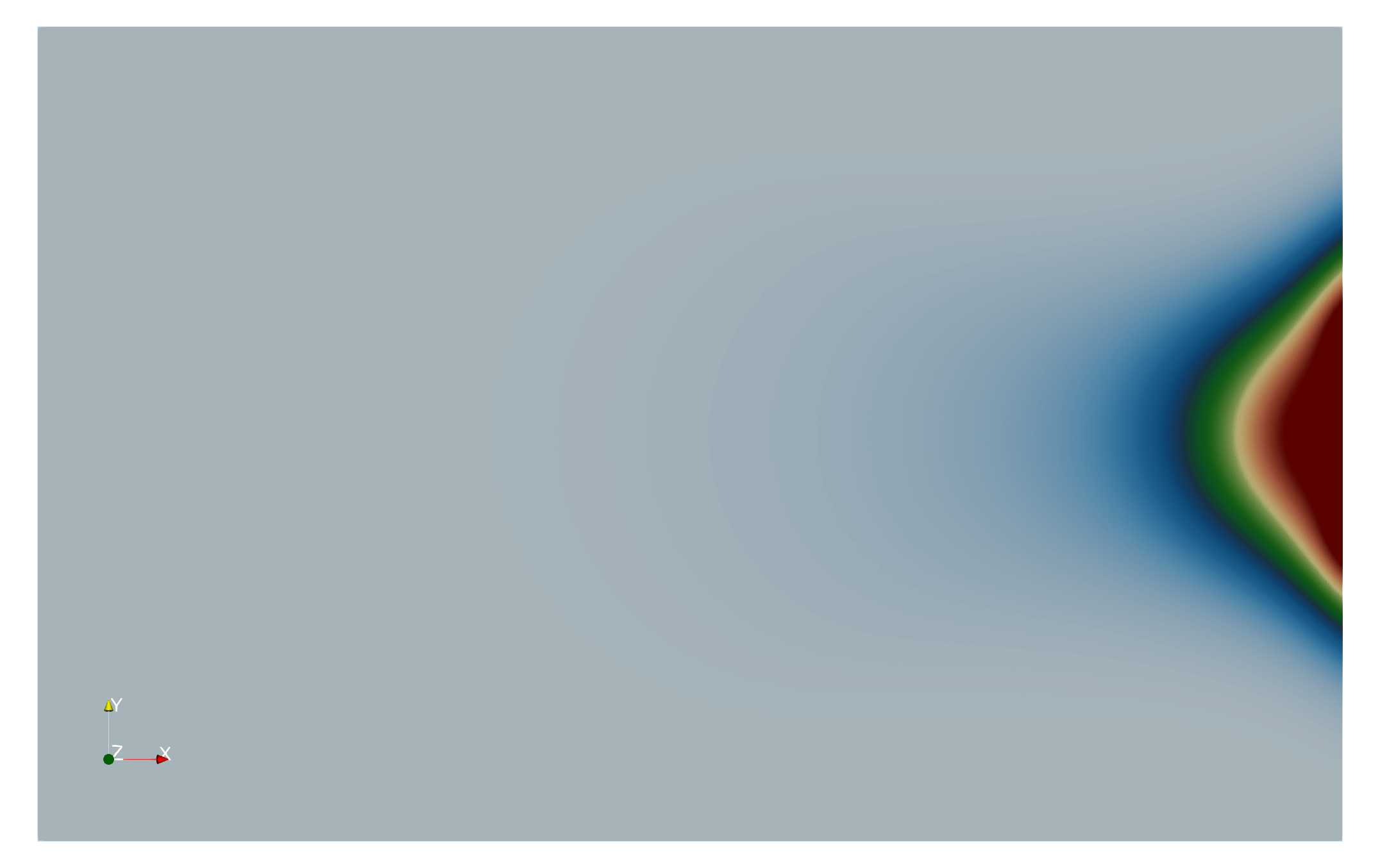}
\end{minipage}
Population in panic $\rho_2$
%%%%%%% Caption for all scenarios %%%%%%%
\end{center}
%%%%%%%%%%%%%%  Control in S1 %%%%%%%%%%%%
\begin{center}
\begin{minipage}[c]{.19\linewidth}
$$(a_3)$$ 
\includegraphics[width=3cm,height=2cm]{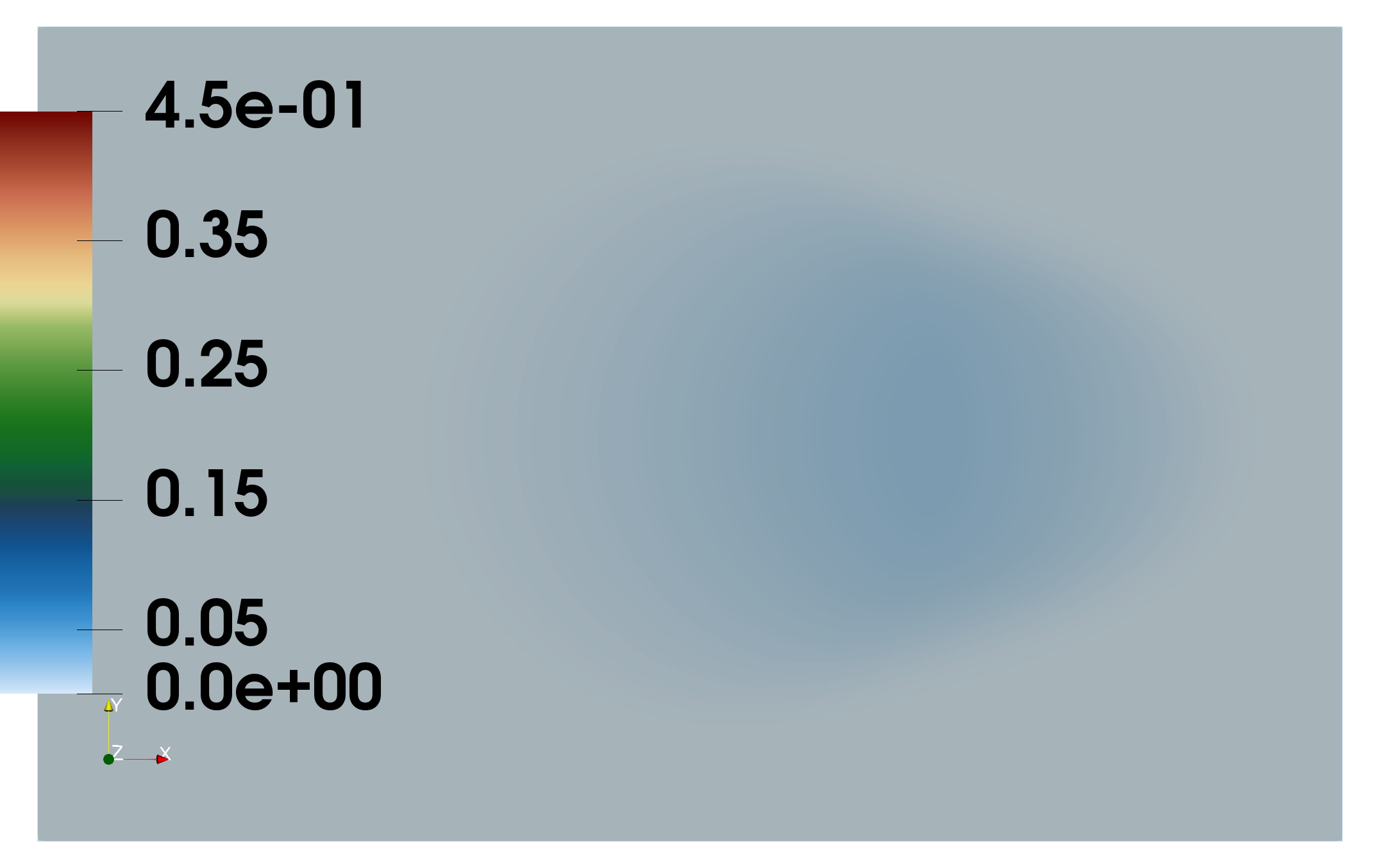}
\end{minipage}
%%%%%
\begin{minipage}[c]{.19\linewidth}
$$(b_3)$$ 
\includegraphics[width=3cm,height=2cm]{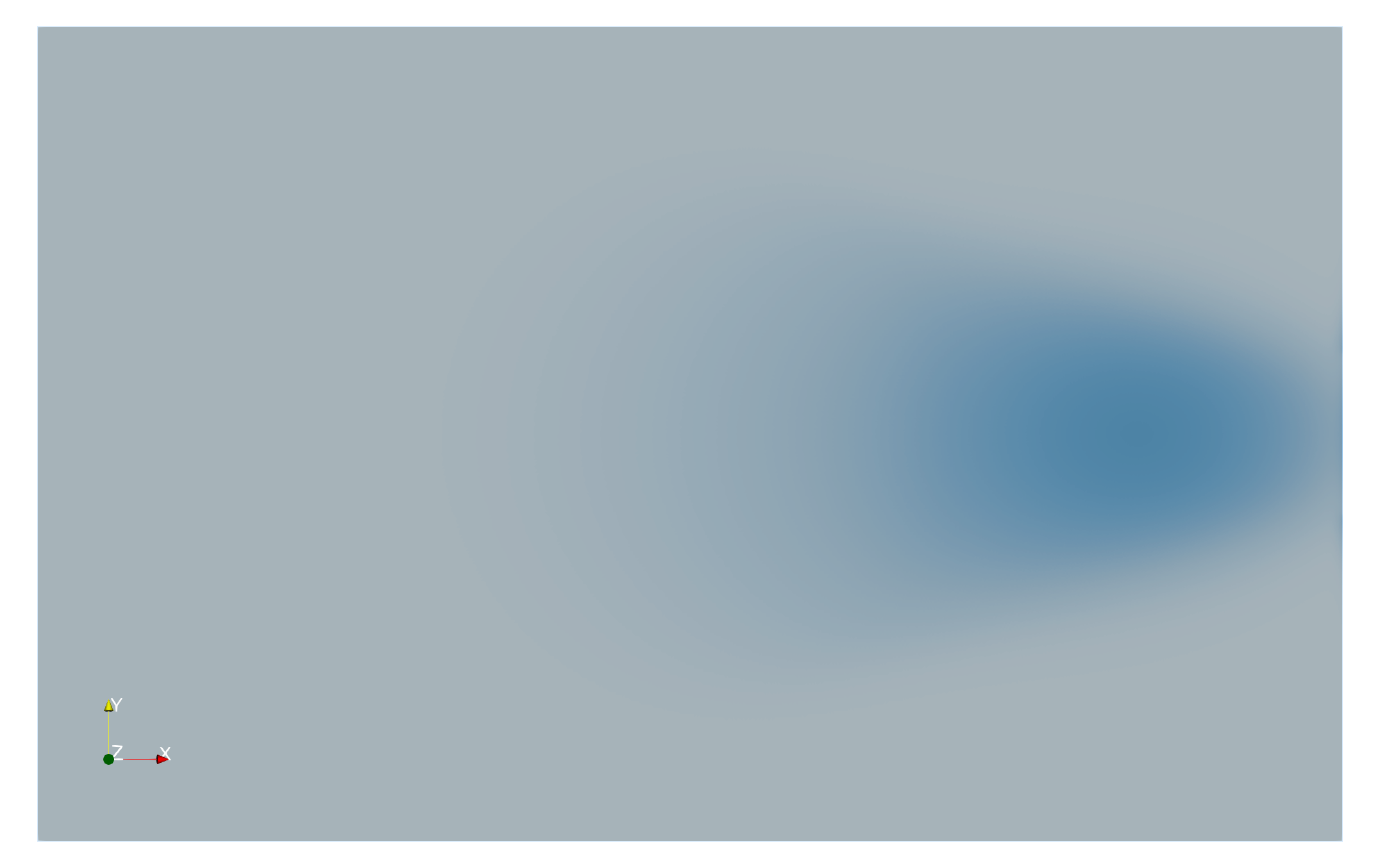}
\end{minipage}
%%%%
\begin{minipage}[c]{.19\linewidth}
$$(c_3)$$ 
\includegraphics[width=3cm,height=2cm]{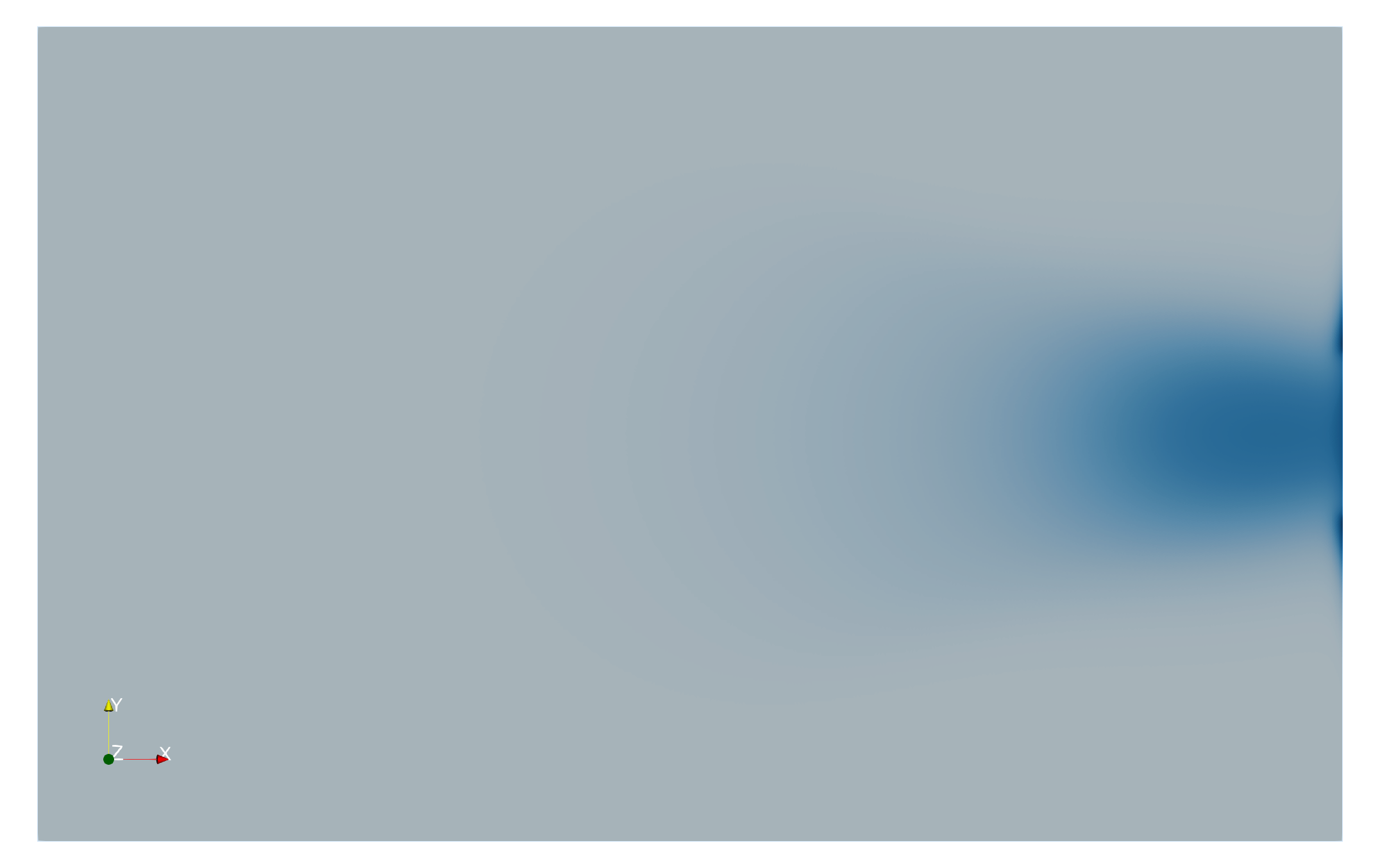}
\end{minipage}
%%%
\begin{minipage}[c]{.19\linewidth}
$$(d_3)$$ 
\includegraphics[width=3cm,height=2cm]{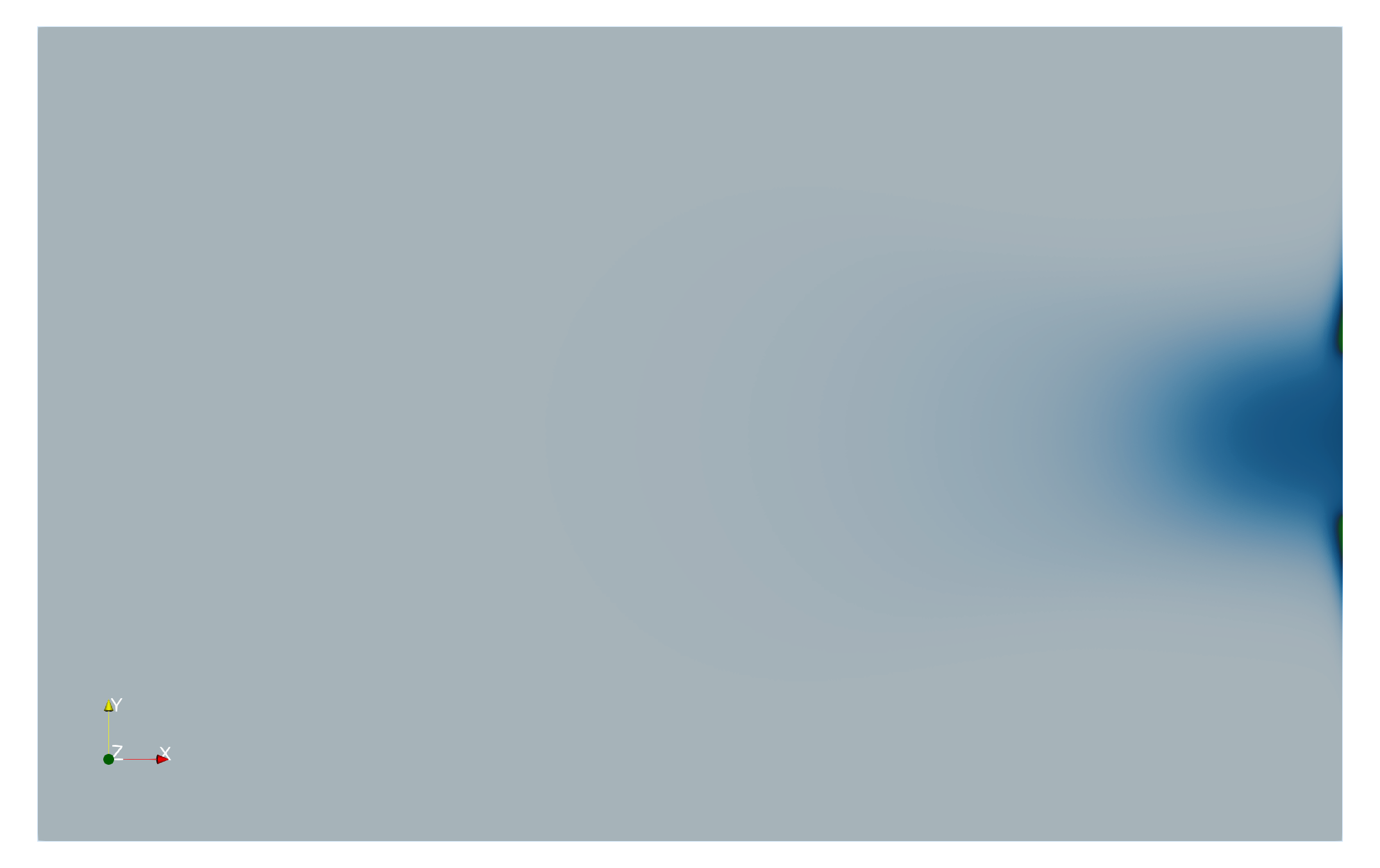}
\end{minipage}
%%%%
\begin{minipage}[c]{.19\linewidth}
$$(e_3)$$ 
\includegraphics[width=3cm,height=2cm]{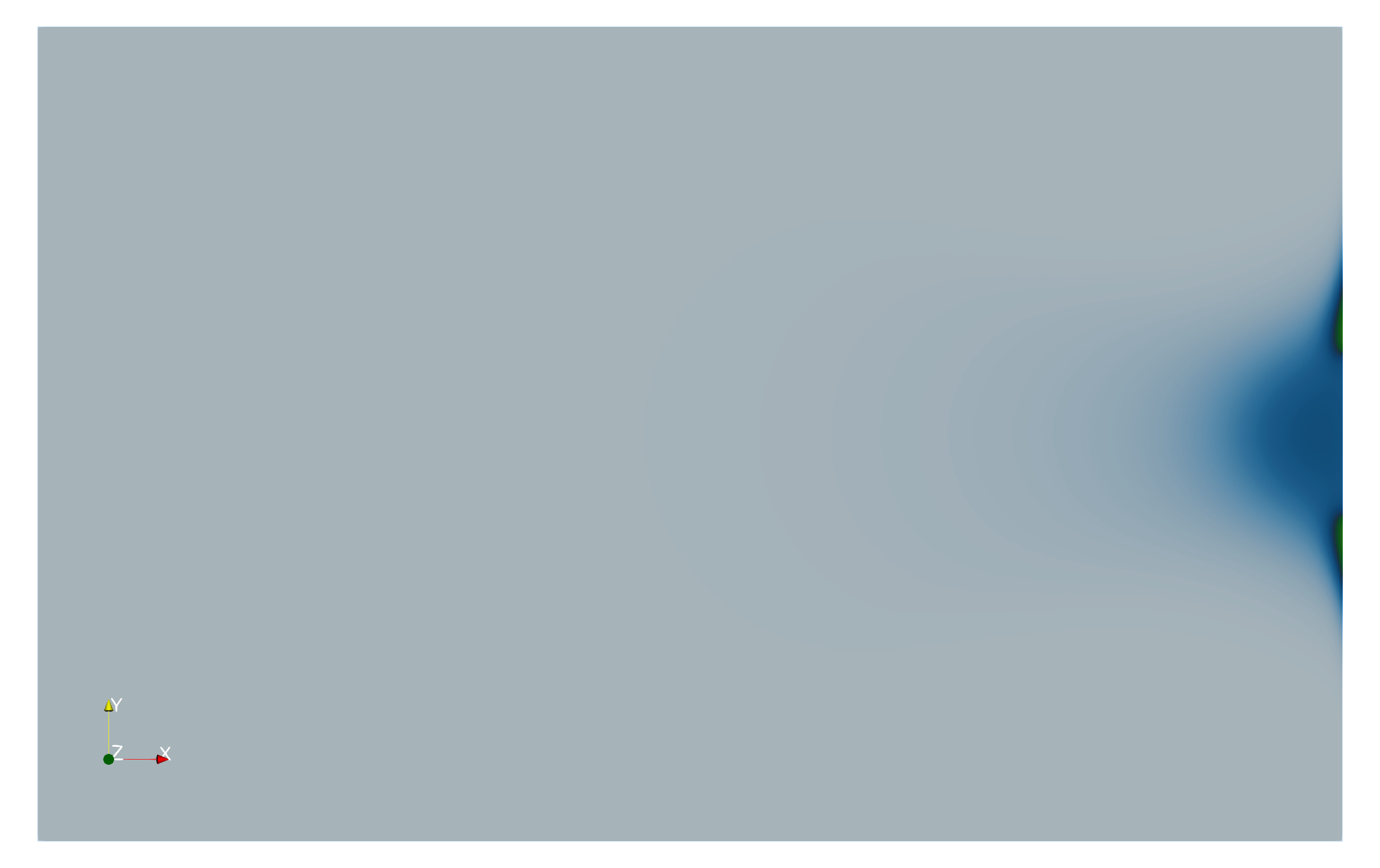}
\end{minipage}
Population in control $\rho_3$
\end{center}
%%%%%%%%%%%%%%  Daily in S1 %%%%%%%%%%%%
\begin{center}
\begin{minipage}[c]{.19\linewidth}
$$(a_4)$$ 
\includegraphics[width=3cm,height=2cm]{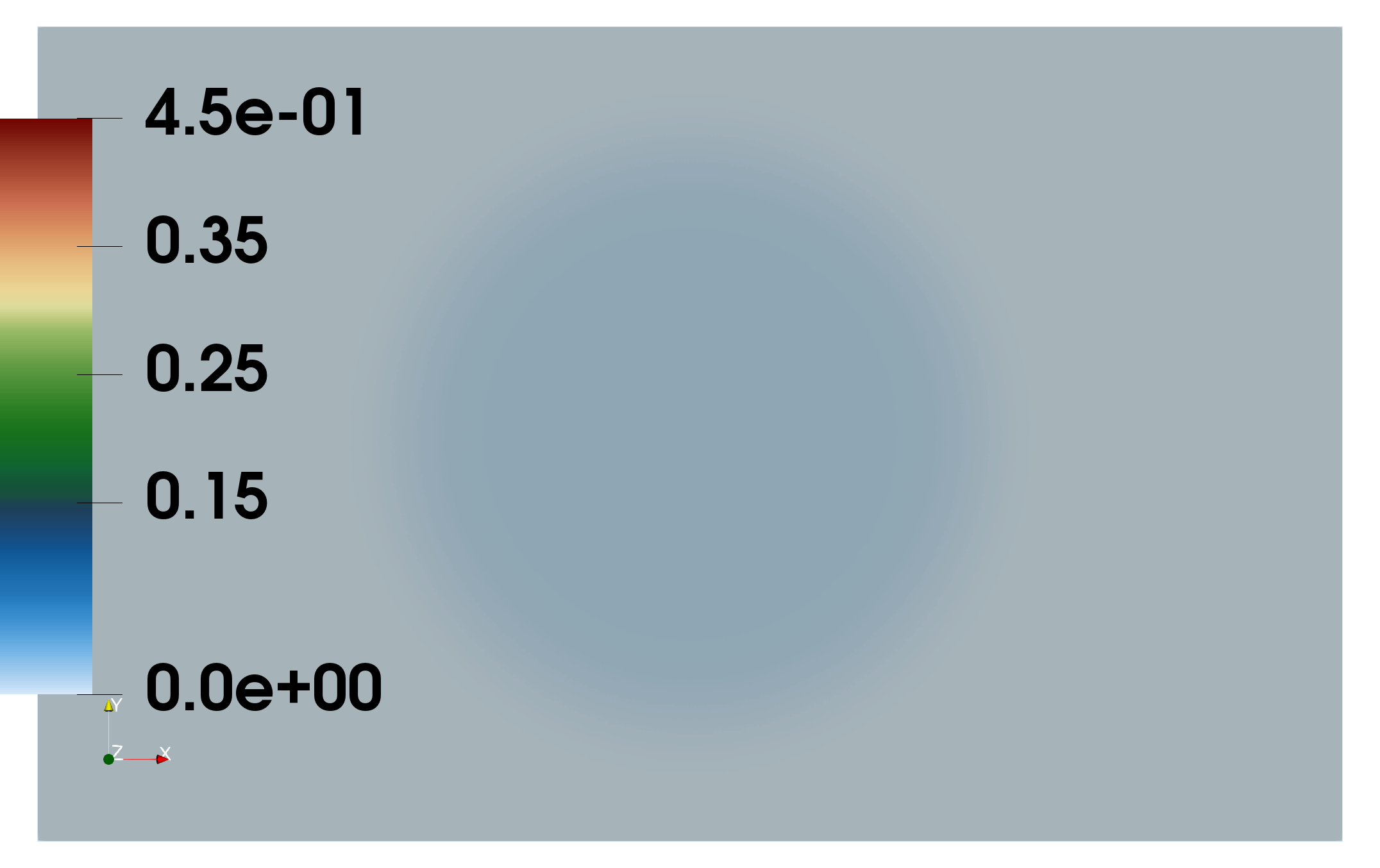}
\end{minipage}
%%%%%
\begin{minipage}[c]{.19\linewidth}
$$(b_4)$$ 
\includegraphics[width=3cm,height=2cm]{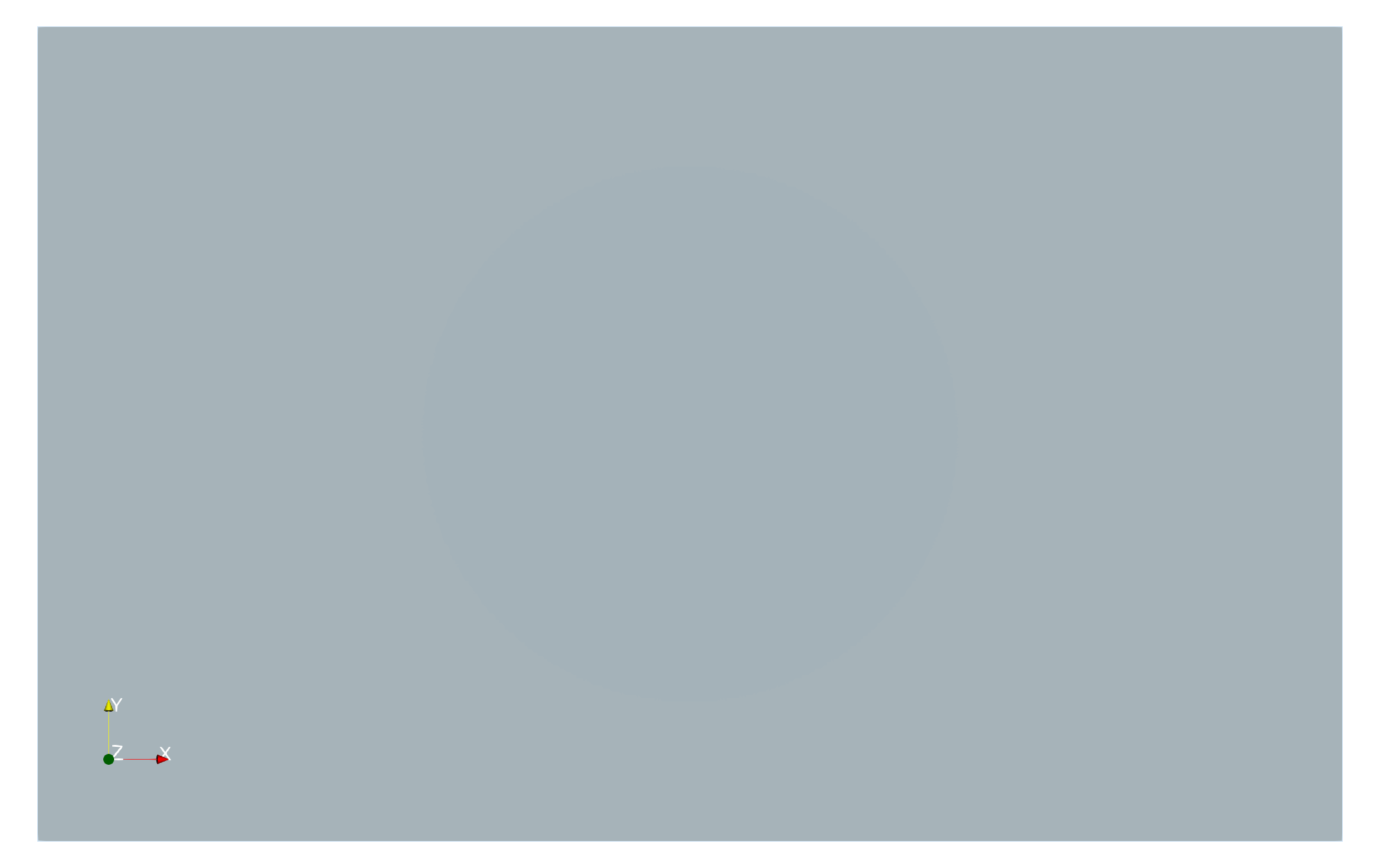}
\end{minipage}
%%%%
\begin{minipage}[c]{.19\linewidth}
$$(c_4)$$ 
\includegraphics[width=3cm,height=2cm]{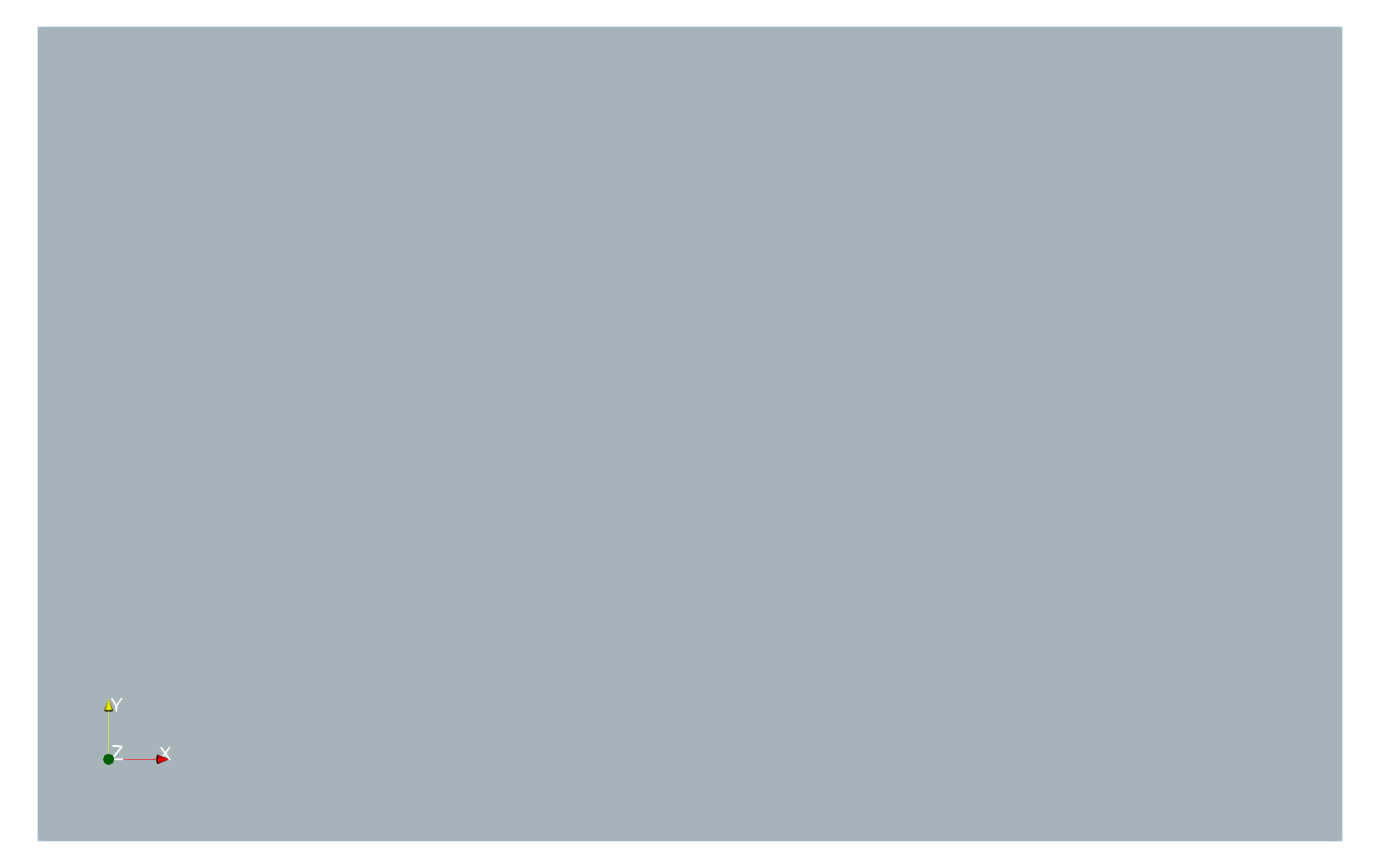}
\end{minipage}
%%%
\begin{minipage}[c]{.19\linewidth}
$$(d_4)$$ 
\includegraphics[width=3cm,height=2cm]{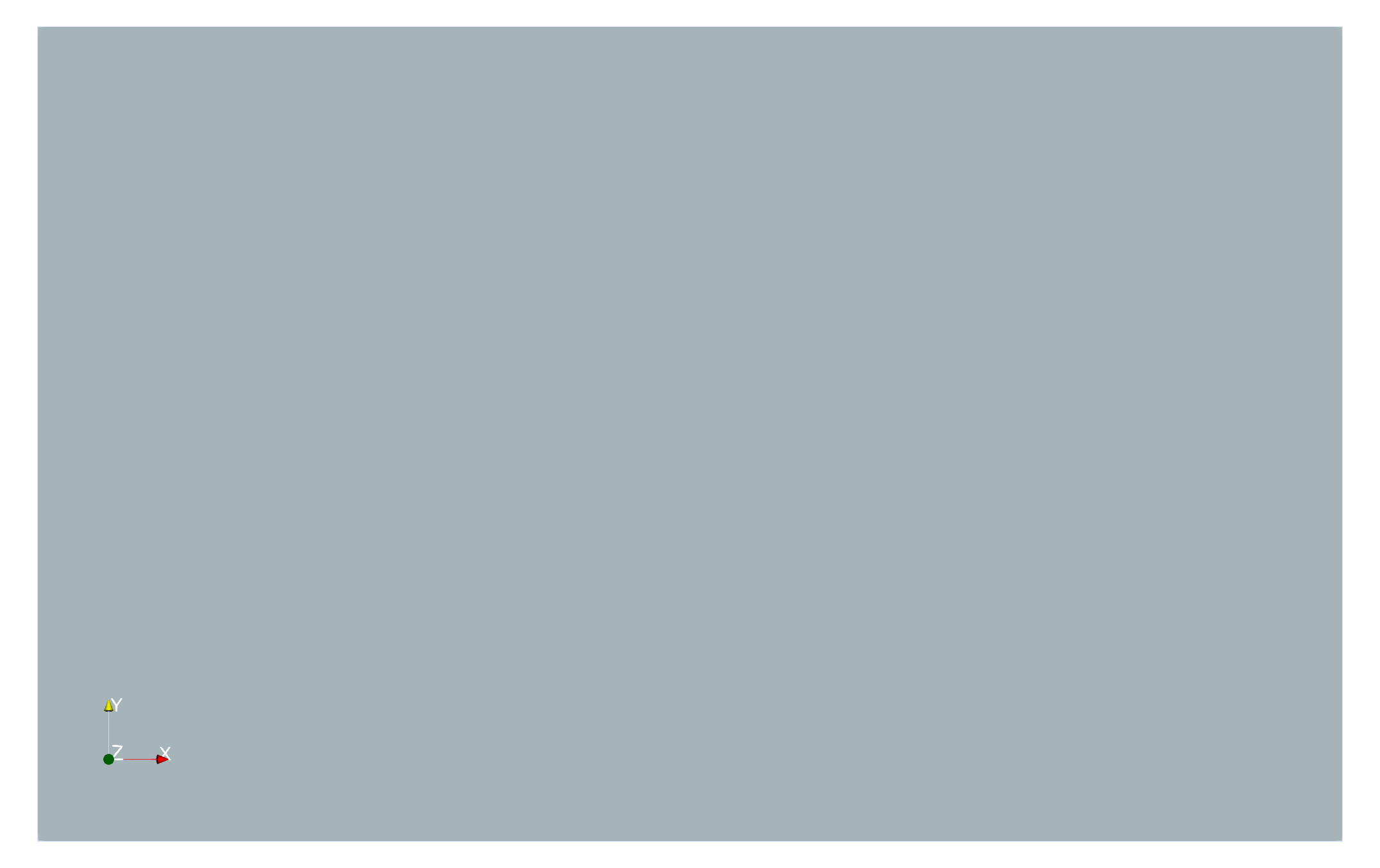}
\end{minipage}
%%%
\begin{minipage}[c]{.19\linewidth}
$$(e_4)$$ 
\includegraphics[width=3cm,height=2cm]{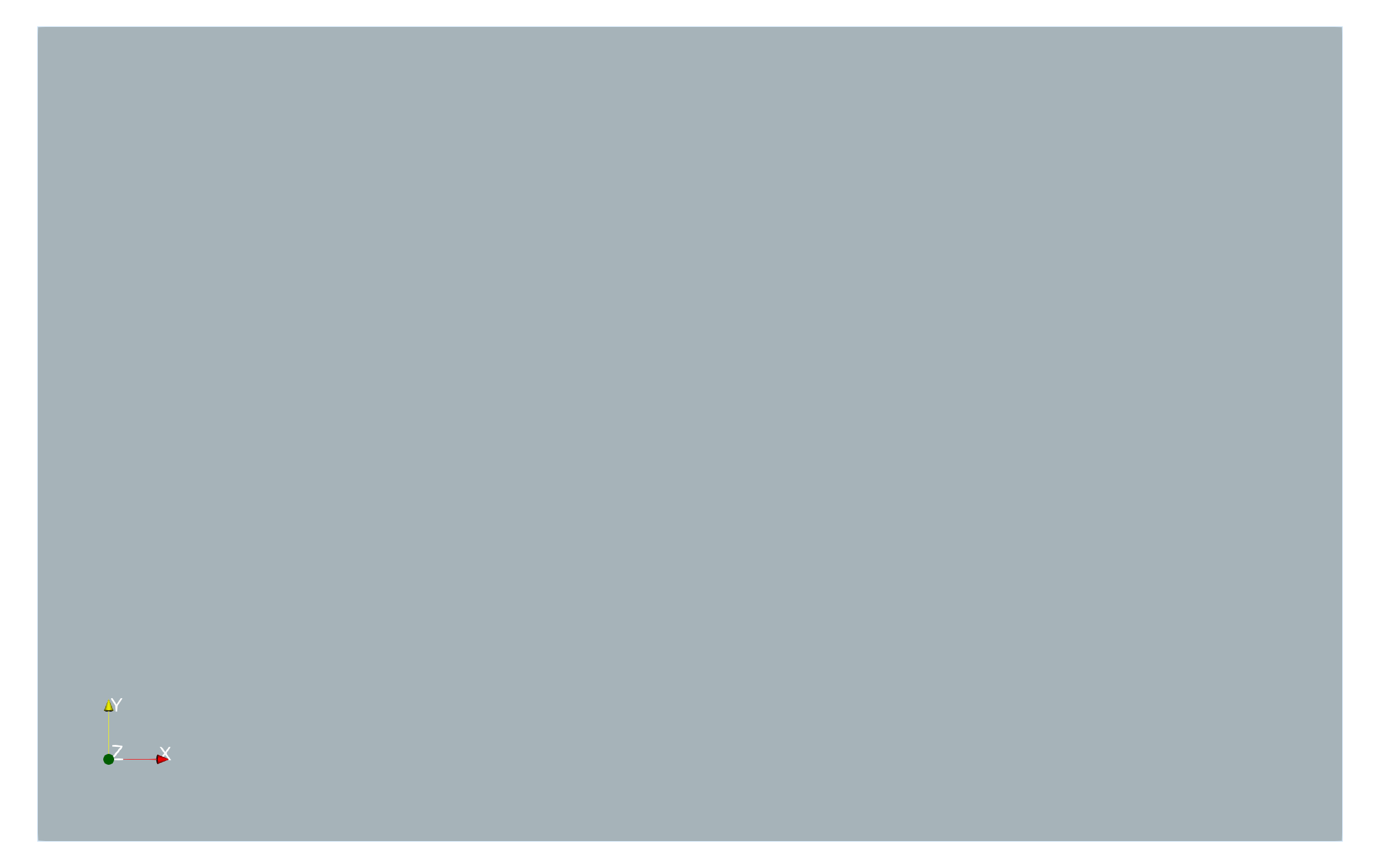}
\end{minipage}
Population in daily behavior $\rho_4$
%%%%%%% Caption for all scenarios %%%%%%%
\caption{{\small{In order to present the time evolution of the human behaviors (alert, panic, control and daily behaviors), we present the simulations results of this populations in Scenario 1 and with respect to captures times $t=50,100,150, 200$ and $250$ respectively. So, each row represent a human behavior. We notice that at the beginning of the simulation, there is a majority of daily and alert populations (row 1 and row 4 respectively) rather than population in a state of panic and control (row 2 and row 3 respectively). This dynamic depends on the structure of the APC model, described in Sections 2: at t = 0 everyone is in a daily behavior, then everyone goes through the state of alert before becoming panicked or controlled.  }}}
\label{Others behaviors}
\end{center}
\end{figure}

 %Since (see Table \ref{tab1}) the population considered here is of low risk culture and then the dominated behavior is that of panic, for each scenario, we give simulation results describing the panic at different times $t=50,100,150,200$ and $250$ respectively. This fact leads to compare between different scenarios (see Figure \ref{Panic0}). Moreover, in order to highlight the time evolution of the other human behaviors, namely, alert, control and daily behavior, we present some simulations but only in Scenario 1, since the evolution is the same in other scenarios (see Figure \ref{Others behaviors}). 

 We notice that at the beginning of the simulation, for $t=50$, there is a majority of daily and alert populations rather than population in a state of panic and control. This dynamic depends on the structure of the APC model, described in Section \ref{Section0}: at $t=0$ everyone is in a daily behavior, then everyone goes through the state of alert before becoming panicked or controlled. Moreover, since the diffusion coefficients for $\rho_1$ and $\rho_2$ are low, the position of the populations is still more or less the same as at the beginning, see $ (a_1)$-$(a_4) $ in Figure \ref{Others behaviors}.
For $t=250$, for all scenarios, the dynamics of the APC is fully developed. Moreover, diffusion and advection phenomena are now visible: the whole  population is in panic  and control state, and they are concentrated near the exit, while the populations in alert state and in daily behavior are negligible, see $(e_1)$-$(e_4) $ in Figure \ref{Others behaviors}.
Moreover, since (see Table \ref{tab1}) the population considered here has low risk culture, the dominated behavior is that of panic.

Comparing the evacuation of population in panic in the three different scenarios, see in Figures \ref{Panic0}, $ (e_1)$ for Scenario 1, $(e_2)$ for Scenario 2 and $ (e_3)$ for Scenario 3 , one notices a strong congestion at the level of the exit in the first scenario. In the second scenario, splitting the initial population into three clusters reduces this congestion. Finally, the presence of an obstacle as in the third scenario further reduces congestion.\\

%\begin{remark}
%\begin{itemize}
 %   \item[\textbf{(i)}] The more the color goes from light blue to dark red, the higher the population
%density is.
 %   \item[\textbf{(ii)}] For each figure, Scenario 1 is denoted by (a), Scenario 2 by (b) and Scenario 3 by (c), respectively. 
%\end{itemize}
%\end{remark}

%%%%%%%%%%%%%%%%%%%%%% Simulations Panic  %%%%%%%%%%%%%%%%%%%
%%%%%%%%%%%%%%  Panic in S1 %%%%%%%%%%%%
\begin{figure}[h!]
\begin{center}
\begin{minipage}[c]{.19\linewidth}
\begin{center}
 $t=50$ \\
 $(a_1)$
\end{center}
\includegraphics[width=3cm,height=2cm]{PANICS1_50.png}
%\begin{center}
%(a) Scenario 1
%\end{center}
\end{minipage}
%%%%%
\begin{minipage}[c]{.19\linewidth}
\begin{center}
 $t=100$ \\
 $(b_1)$
\end{center}
\includegraphics[width=3cm,height=2cm]{PANICS1_100.png}
%\begin{center}
%(b) Scenario 2
%\end{center}
\end{minipage}
%%%%
\begin{minipage}[c]{.19\linewidth}
\begin{center}
$t=150$\\
%Scenario 1\\
$(c_1)$ 
\end{center}
\includegraphics[width=3cm,height=2cm]{PANICS1_150.png}
%\begin{center}
%(c) Scenario 3
%\end{center}
\end{minipage}
%%%
\begin{minipage}[c]{.19\linewidth}
\begin{center}
 $t=200$\\$(d_1)$
\end{center}
\includegraphics[width=3cm,height=2cm]{PANICS1_200.png}
%\begin{center}
%(c) Scenario 4
%\end{center}
\end{minipage}
%%%%%%
\begin{minipage}[c]{.19\linewidth}
\begin{center}
$t=250$\\
$(e_1)$ 
\end{center}
\includegraphics[width=3cm,height=2cm]{PANICS1_250.png}
\end{minipage}
Scenario 1
%%%%%%% Caption for all scenarios %%%%%%%
\end{center}
%\end{figure}
%%%%%%%%%%%%%%  Panic in S2 %%%%%%%%%%%%
\begin{center}
\begin{minipage}[c]{.19\linewidth}
\begin{center}
$$(a_2)$$
\end{center}
\includegraphics[width=3cm,height=2cm]{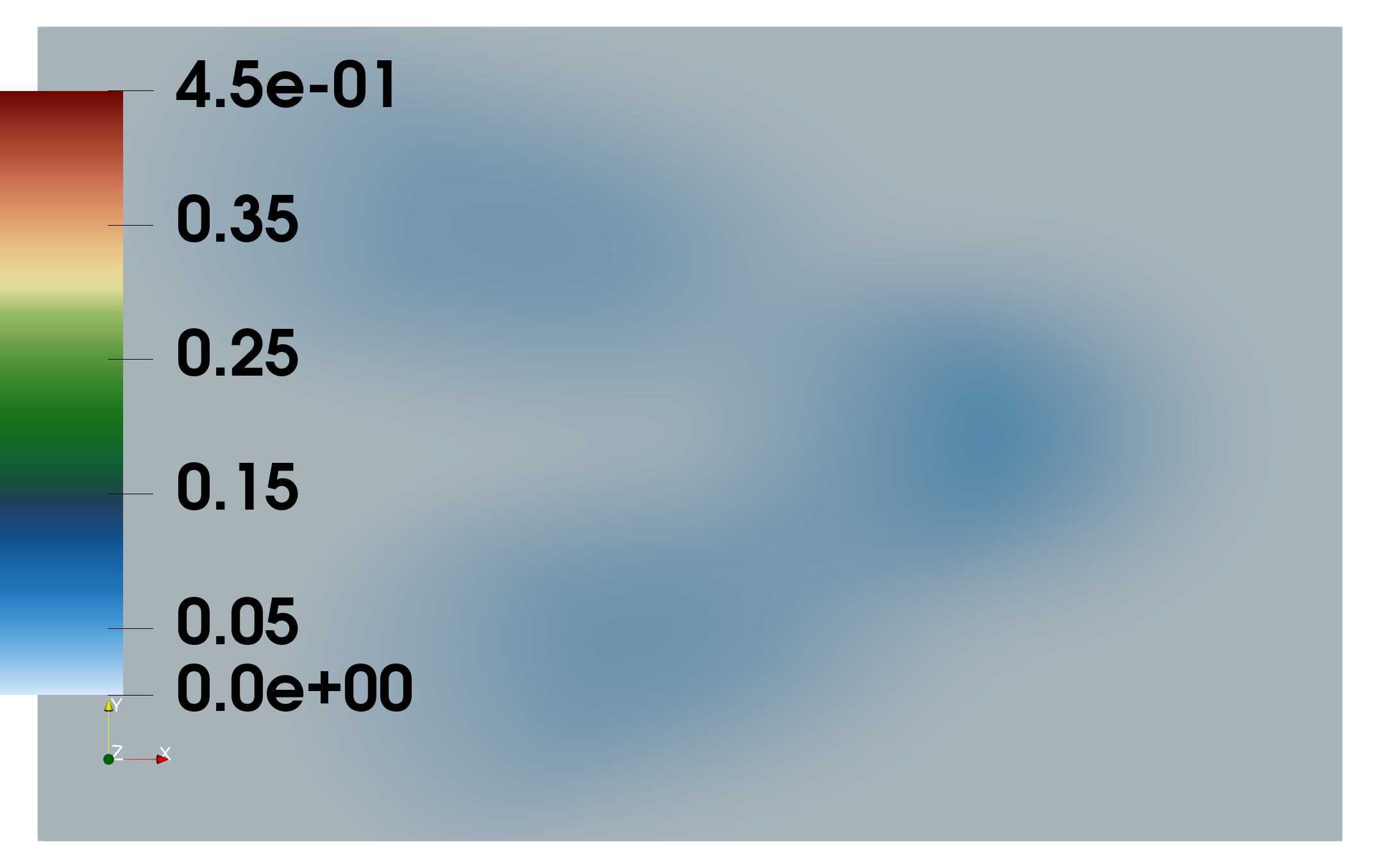}
\end{minipage}
%%%%%
\begin{minipage}[c]{.19\linewidth}
\begin{center}
$$(b_2)$$
\end{center}
\includegraphics[width=3cm,height=2cm]{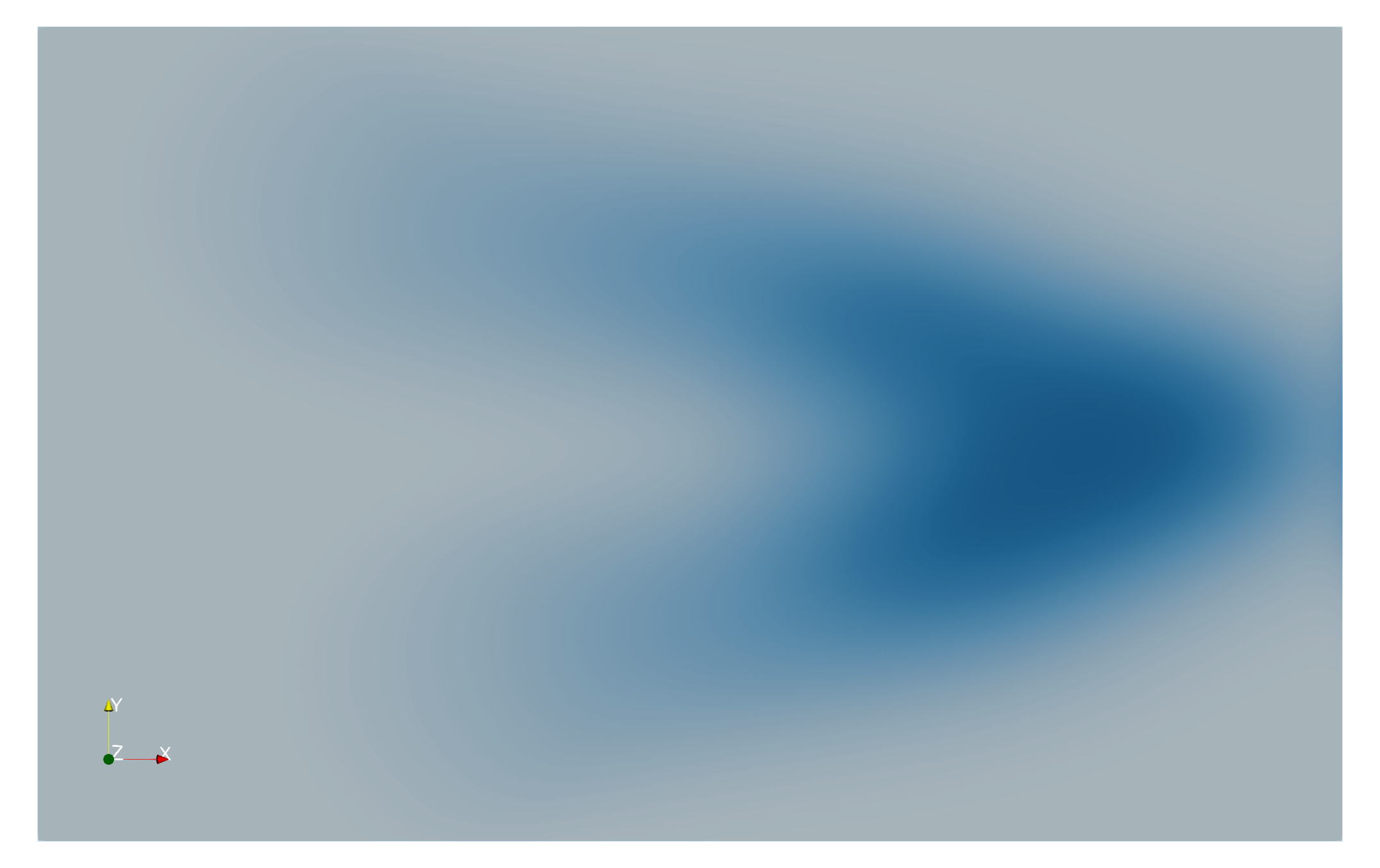}
\end{minipage}
%%%%
\begin{minipage}[c]{.19\linewidth}
\begin{center}
$$(c_2)$$
\end{center}
\includegraphics[width=3cm,height=2cm]{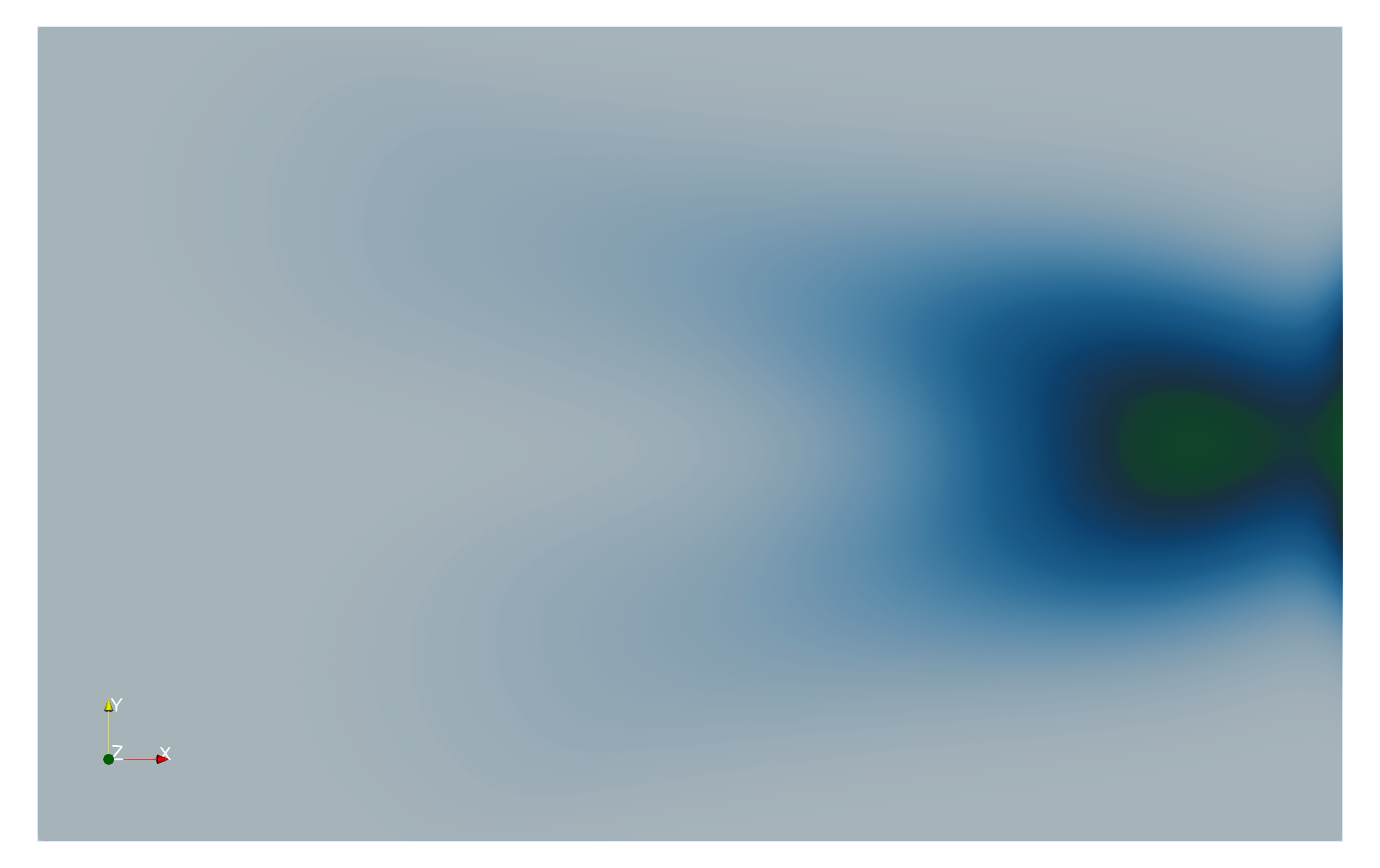}
\end{minipage}
%%%
\begin{minipage}[c]{.19\linewidth}
\begin{center}
$$(d_2)$$
\end{center}
\includegraphics[width=3cm,height=2cm]{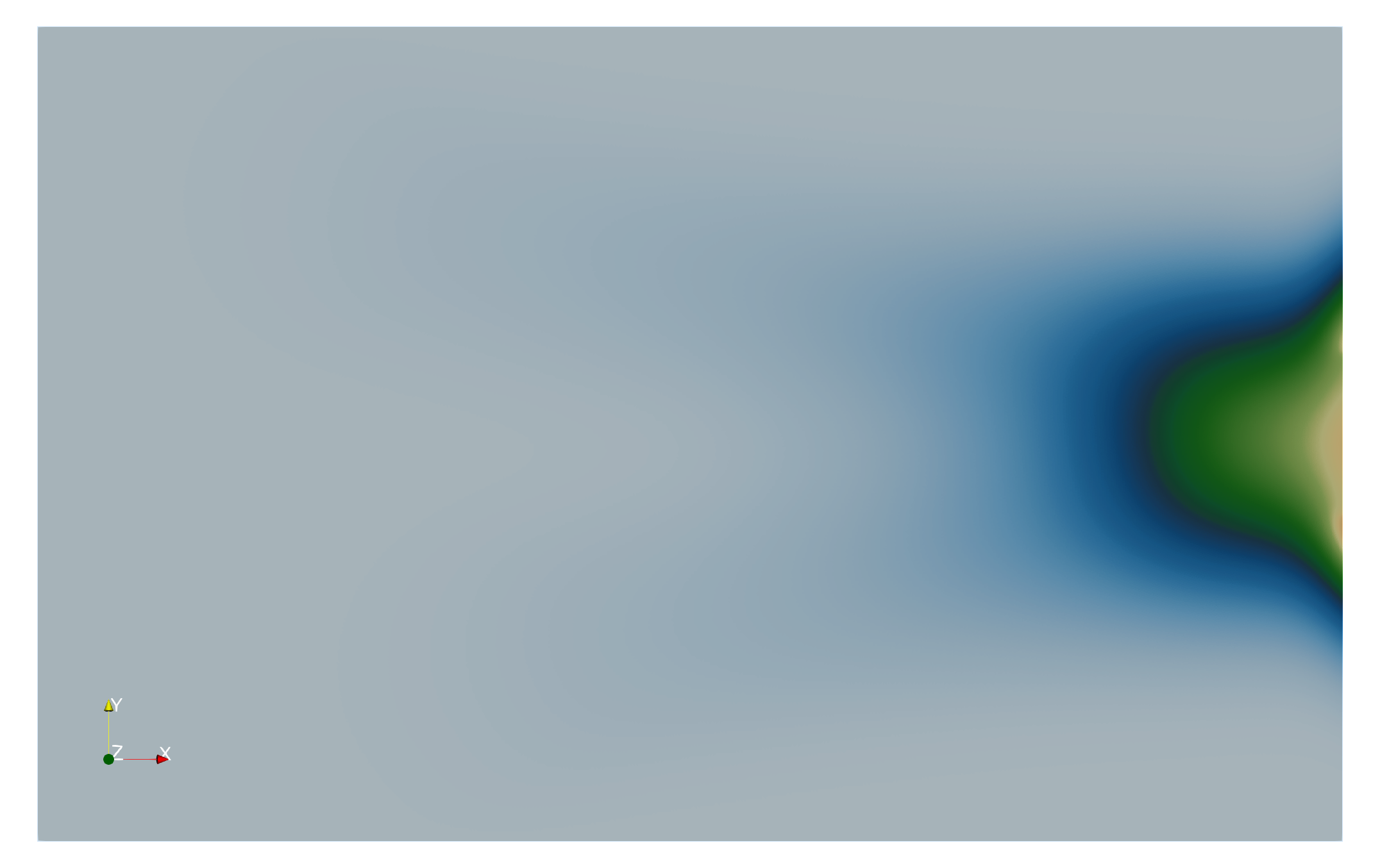}
\end{minipage}
%%%%
\begin{minipage}[c]{.19\linewidth}
\begin{center}
$$(e_2)$$
\end{center}
\includegraphics[width=3cm,height=2cm]{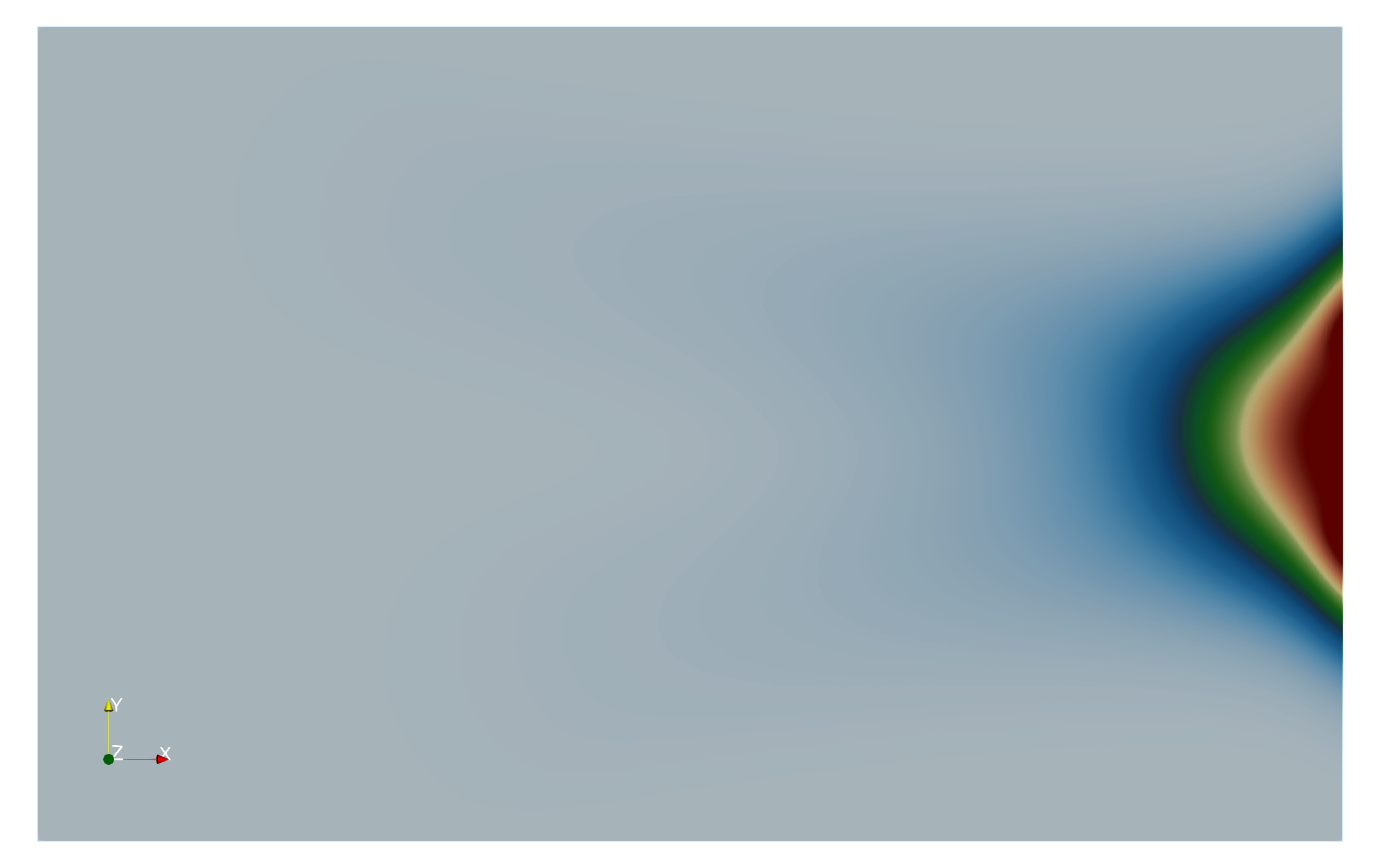}
\end{minipage}
Scenario 2
%%%%%%% Caption for all scenarios %%%%%%%
%\caption{{\emph{\small{\textbf{Population in panic $ \rho_2 $ at $t=25$.}  The dark blue color indicates a low population density, since, quite early in the event, the panic has not yet spread far enough.}}}}
%\label{Panic0}
\end{center}
%\end{figure}
%%%%%%%%%%%%%%  Panic in S3 %%%%%%%%%%%%
\begin{center}
\begin{minipage}[c]{.19\linewidth}
$$(a_3)$$
\includegraphics[width=3cm,height=2cm]{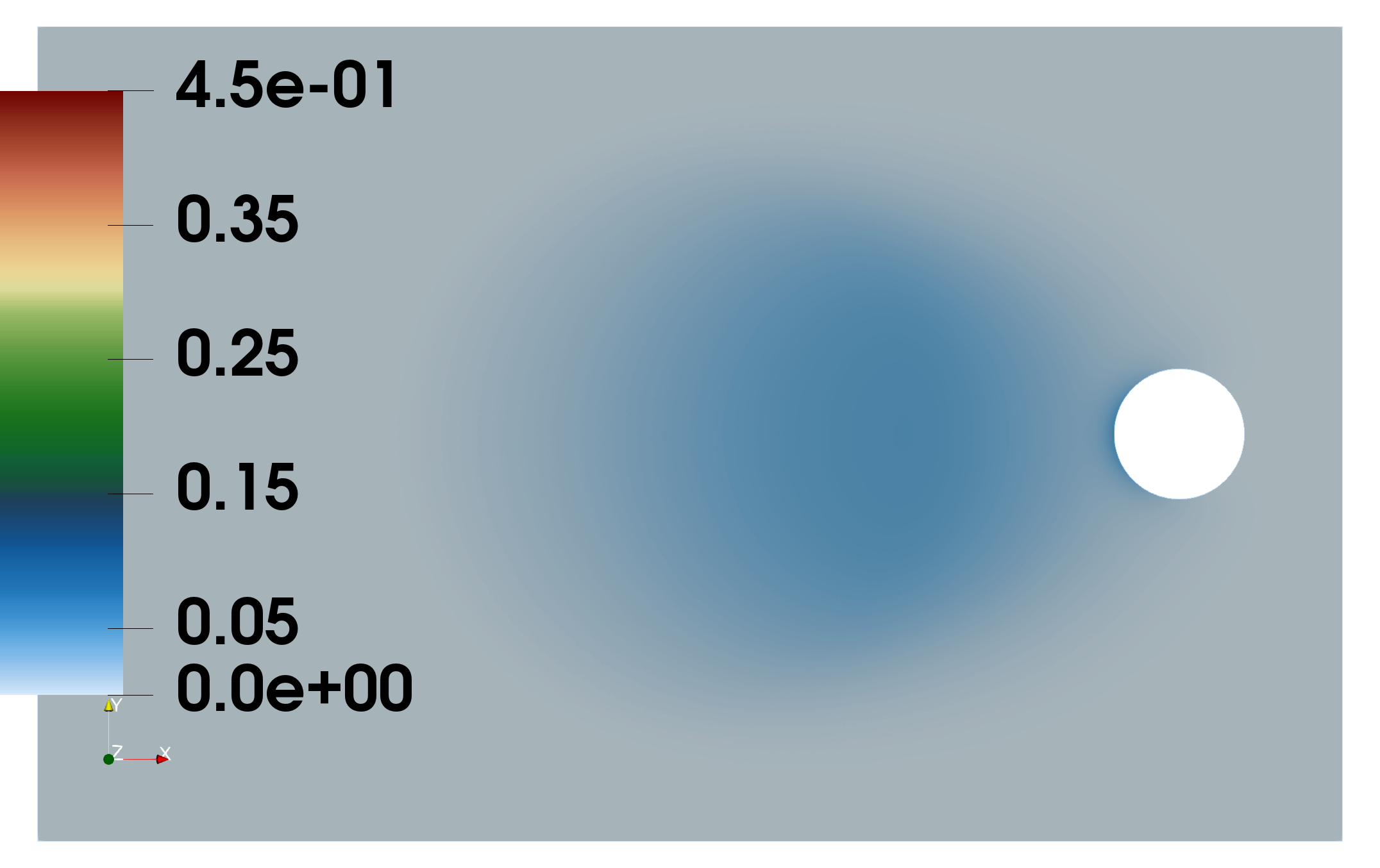}
\end{minipage}
%%%%%
\begin{minipage}[c]{.19\linewidth}
$$(b_3)$$
\includegraphics[width=3cm,height=2cm]{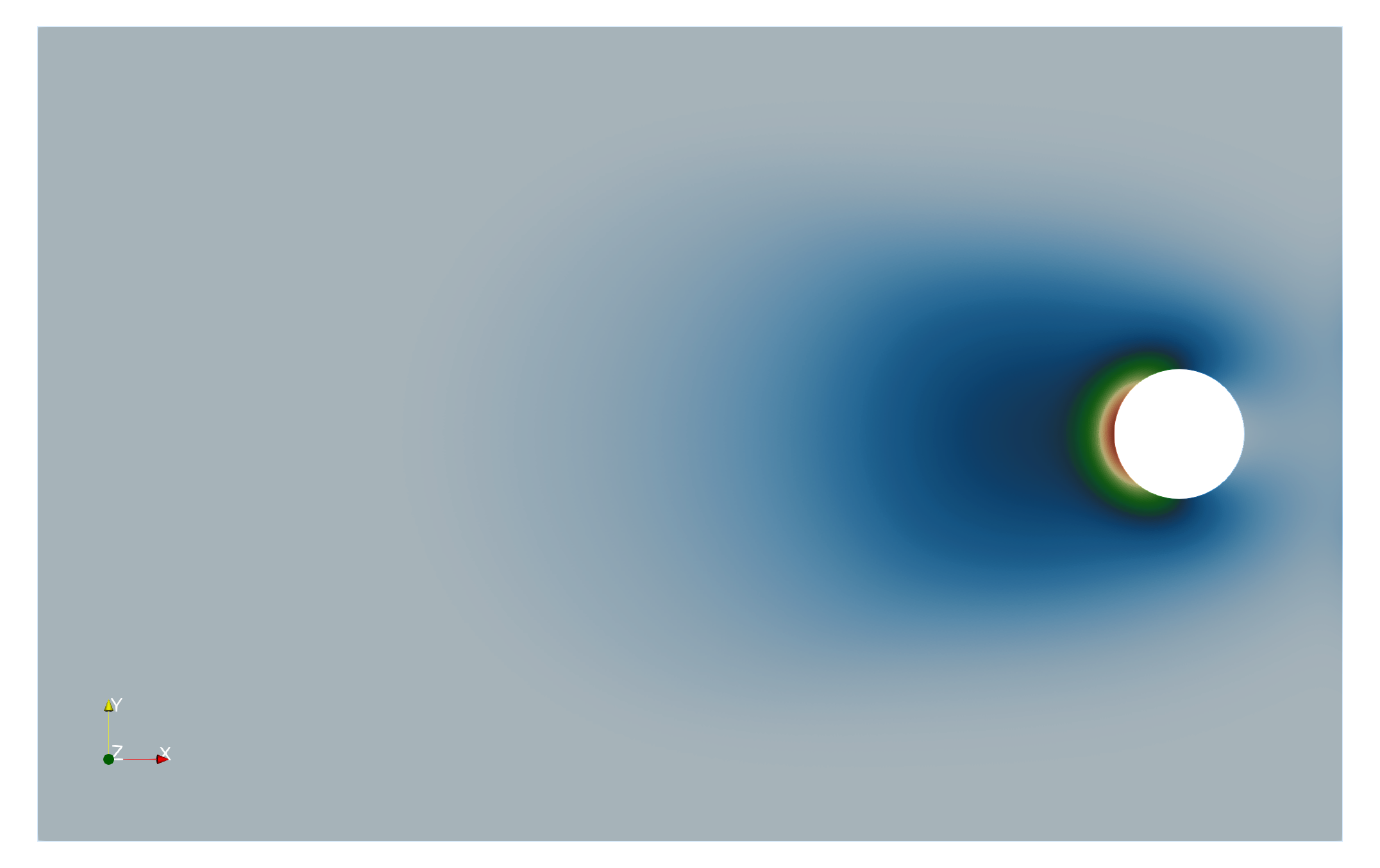}
\end{minipage}
%%%%
\begin{minipage}[c]{.19\linewidth}
$$(c_3)$$
\includegraphics[width=3cm,height=2cm]{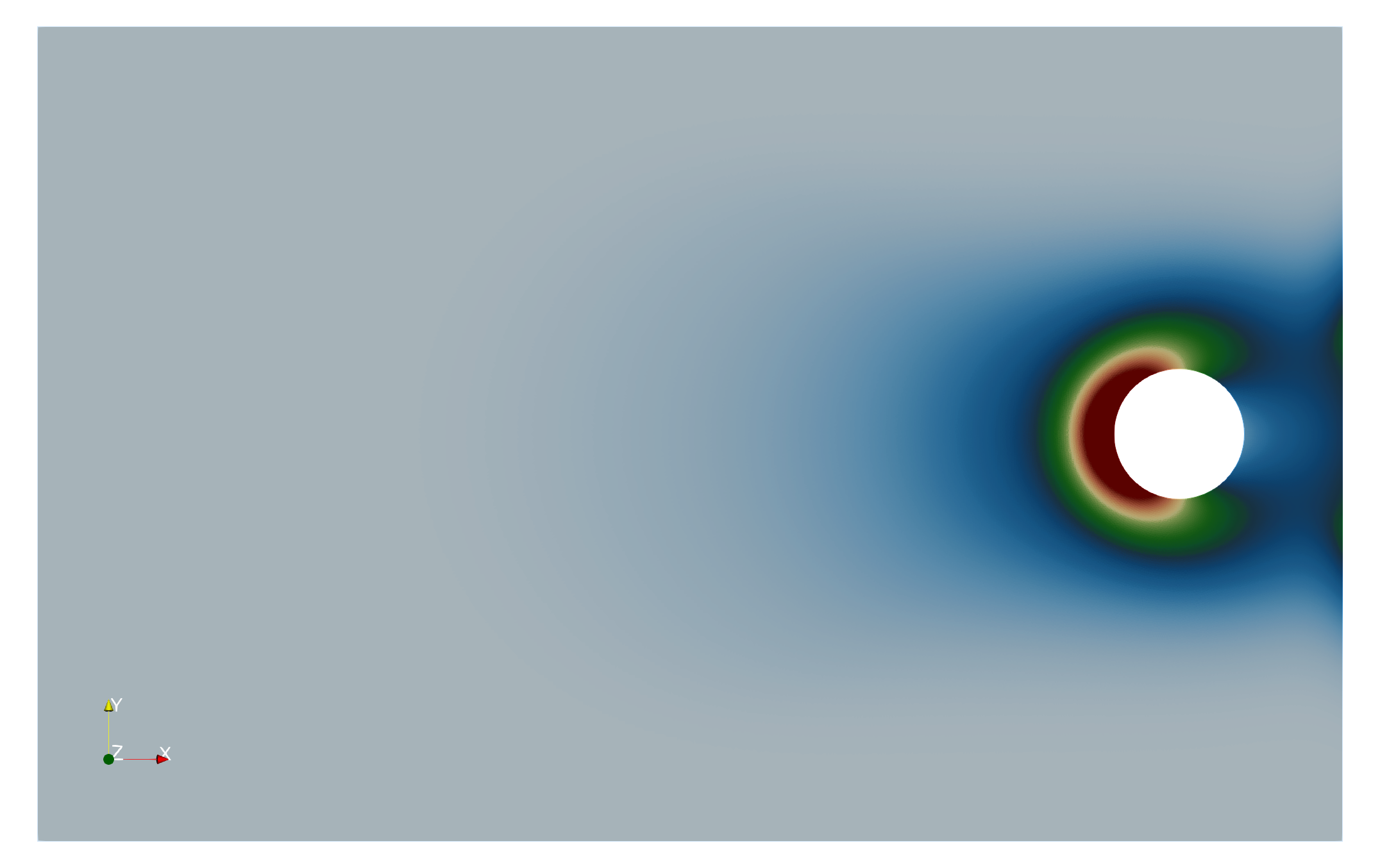}
\end{minipage}
%%%
\begin{minipage}[c]{.19\linewidth}
$$(d_3)$$
\includegraphics[width=3cm,height=2cm]{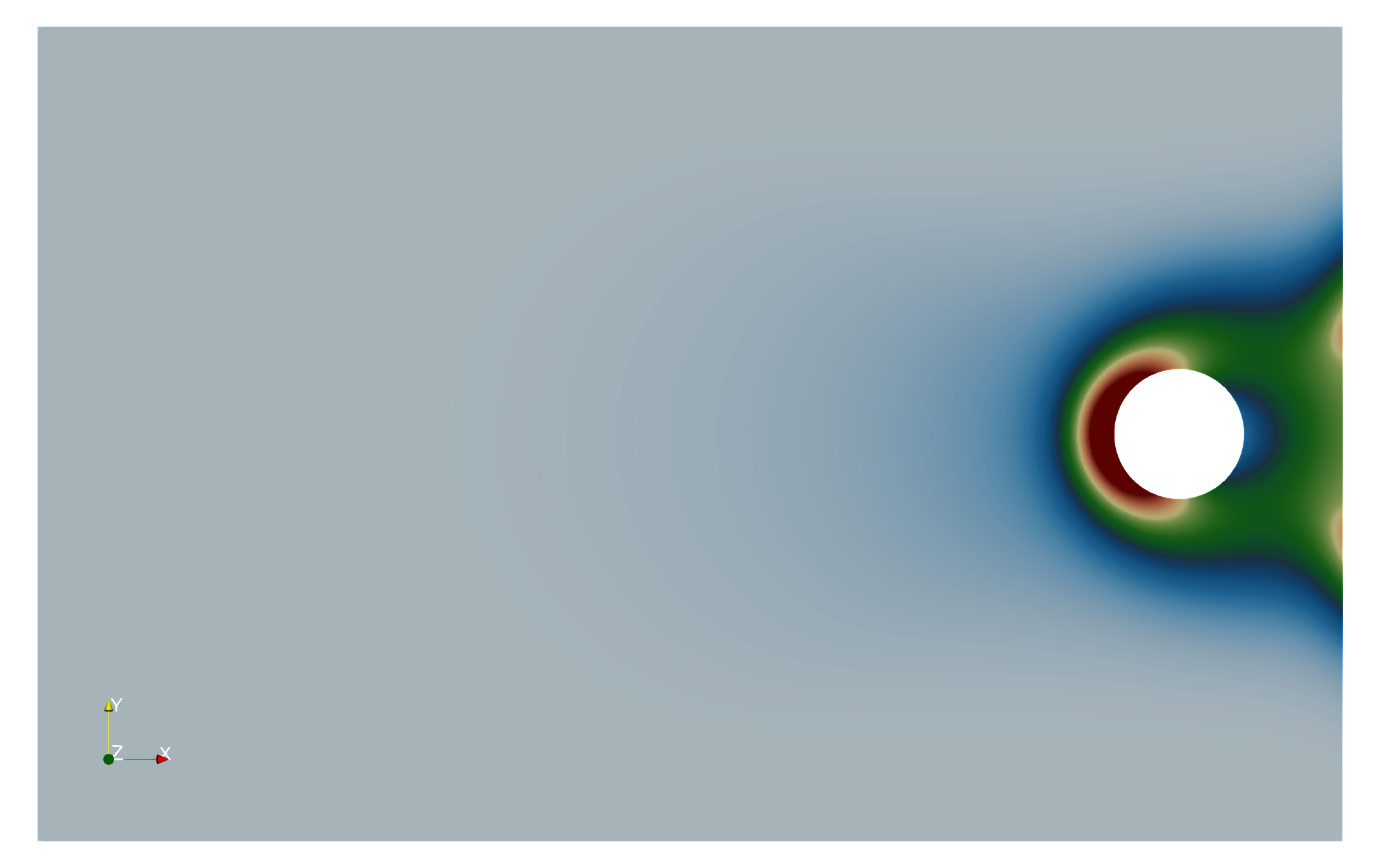}
\end{minipage}
%%%
\begin{minipage}[c]{.19\linewidth}
$$(e_3)$$
\includegraphics[width=3cm,height=2cm]{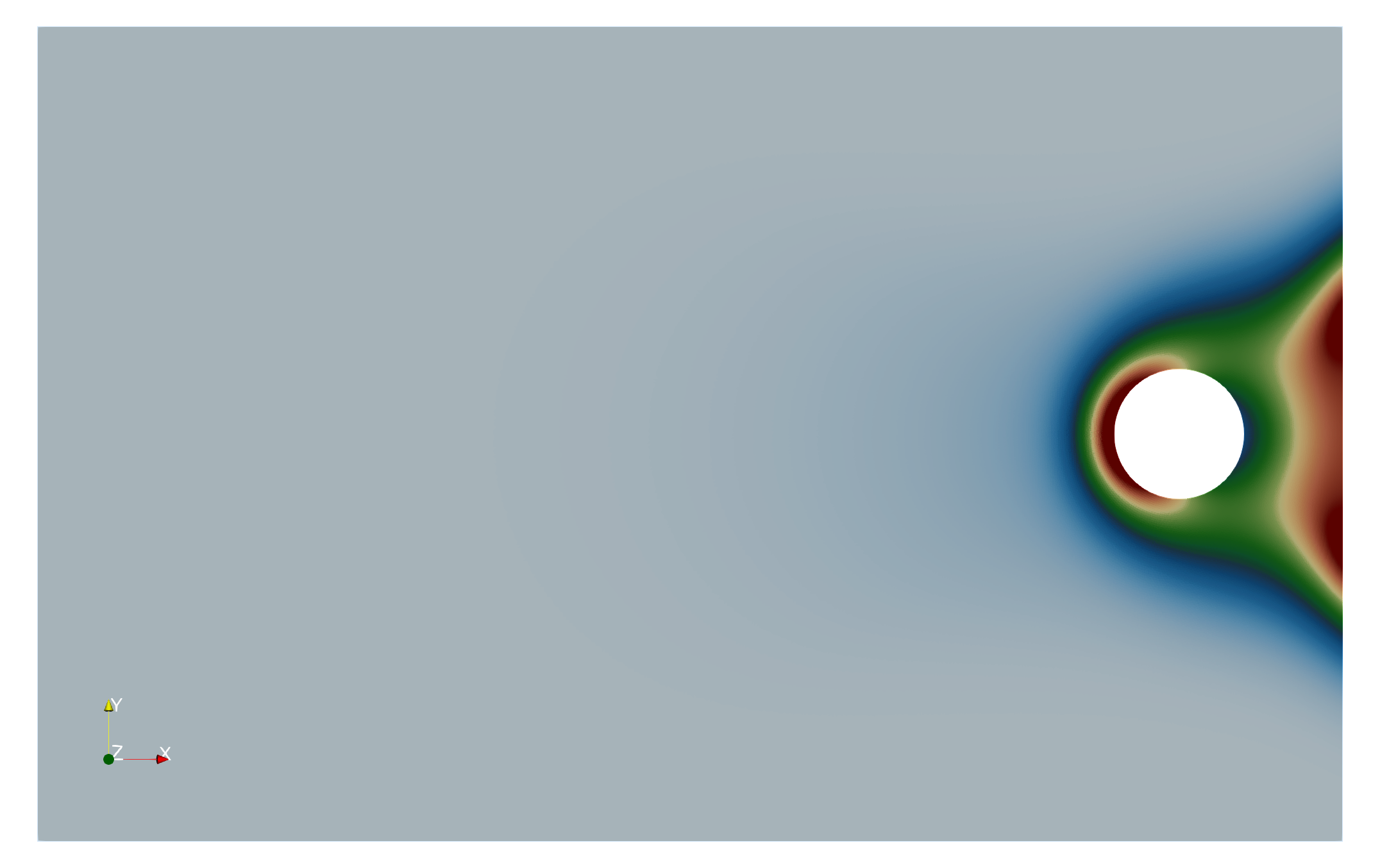}
\end{minipage}
Scenario 3
%%%%%%% Caption for all scenarios %%%%%%%
\caption{{\small{Population in panic $ \rho_2 $ over the three scenarios at the captures times $t=50,100,150, 200$ and $250$ respectively. Notice that, each row represent the (time) evolution of the population at each scenario: at time $t=50$ of each scenario (column $(a)$), population in panic is small with respect to the initial population, since at the this time the majority of population is concentrated in alert, but over time,  panicked ones become greater and greater, thanks to the APC dynamics, and the fact that the population has a low risk culture, in addition, the phenomena of diffusion and advection are now visible (the population in panic concentrated near the exit). Furthermore, by comparing captures at each column $(b)$, $(c)$, $(d)$ and $(e)$, we can observe that in Scenario 2 there is slightly less congestion near the exit than in Scenario 1, this is can be highlighted from time $t=100$ to $t=250$. Moreover, in Scenario 3, we observe that the congestion is less than in the previous cases. Indeed, the role of the obstacle is to facilitate the access to the exit. }}}
\label{Panic0}
\end{center}
\end{figure}

%\begin{remark}
%In the sequel, we give simulation results representing the other human behaviors, alert, control and daily behavior populations in order to show the time evolution of this populations. But, we consider only Scenario 1, since the behavior of all populations is the same for the the other scenarios 1 and 2.
%\end{remark}
 %\clearpage
\section{Conclusion}
In this work, we introduce  a new spatio-temporal APC (alert, panic and control) model describing the evacuation of a population presented via different human behaviors during a catastrophic event. First, using the first-order macroscopic crowd theory, we derive a new spatio-temporal APC model. It is a system of advection-diffusion-reaction equations with nonlinear Robin boundary conditions. Then, using a semigroup approach and abstract evolution equations, we prove the local existence and a regularity result of the solutions of our model.  Moreover, we establish the positivity of the solution using the positively invariant sets approach. Finally, to illustrate our results, we present different numerical simulations of the population evacuation, using three different scenarios.
%%%%% Appendice 
\clearpage
\begin{appendices}
\section{Proofs of the preliminary results of Section \ref{sec:3}}\label{Annexe:A}
 \noindent  \textbf{Proof of Proposition \ref{Proposition:3.2}.}
\textbf{(i)} It is well-known from \cite[Section 3.1.1]{Lunardi}, that for each $i=1,\cdots,5,$  the realization of the Laplacian operator $ \mathcal{A}_i=d_i\Delta $ on $ L^p(\Omega) $ with homogeneous Neumann boundary condition generates a contraction holomorphic $C_0$-semigroup $(\mathcal{T}_{i}(t))_{t\geq 0}$ of angle $\pi/2$, we refer also to \cite[Sections 1.4]{Davies} for a similar result. So that, $ \mathcal{A}_{0} $ generates a contraction holomorphic $C_0$-semigroup $(\mathcal{T}(t))_{t\geq 0} $ of angle $\pi/2$ on $X$ as a diagonal matrix-valued operator. Hence, the semigroup $(\mathcal{T}(t))_{t\geq 0} $ is given by:  
\begin{equation}\label{definition semigroup}
\mathcal{T}(t)=diag(\mathcal{T}_{1}(t),\cdots ,\mathcal{T}_{5}(t) )^*, \quad t\geq 0.
\end{equation}
\textbf{(ii)} The compactness of the semigroup $ (\mathcal{T}(t))_{t\geq 0} $ (when for each $t>0$, $\mathcal{T}(t)$ defines a compact operator) follows from the composition of the semigroup $ (\mathcal{T}(t))_{t\geq 0} $ \eqref{definition semigroup} using the results of \cite[Sections 1.6]{Davies}.  For the positivity of the semigroup  $(\mathcal{T}(t))_{t\geq 0} $ on the Banach lattice $X$ it suffices to prove that the resolvent $ R(\lambda , \mathcal{A}_0) $ is a positive operator for $\lambda \in \varrho (\mathcal{A}_{0})$ large enough. That is, to prove that, for $ \varphi (x) \geq 0 $  we have $ R(\lambda, \mathcal{A}_0) \varphi(x) \geq 0$ for all $x\in \Omega$, see \cite[ Chapter VI, Section 1.8]{Nagel},  we refer also to the result of invariance under closed sets \cite[Theorem 5.1]{Pazy}. This is equivalent to prove that, for $\psi \in D(\mathcal{A}_0):=\ker(\mathcal{L})$ which is the unique solution of the following resolvent equation: $$\varphi(x)= (\lambda-\mathcal{A}_0)\psi(x) \geq 0, \quad x\in \Omega,$$ we have $\psi$ is positive ($\psi$ is always exists, the question is about its positivity). This holds directly from the maximum principle, see \cite{Davies}.\\

\noindent  \textbf{Proof of Proposition \ref{Proposition:3.5}.}
Let $ 0 \leq \delta \leq 1  $, the fact that the extension semigroup $ (\mathcal{T}_{\delta-1}(t))_{t\geq 0} $ exists as a strongly continuous positive semigroup with generator $( \mathcal{A}_{\delta-1},D(\mathcal{A}_{\delta-1})=X_{\delta}) $ is due to \cite{Batkai}. The analycity and compactness of $ (\mathcal{T}_{\delta-1}(t))_{t\geq 0} $ follows from \cite{Greiner}.
\\

%\noindent  \textbf{Proof of Lemma \ref{Lemma:3.6}.} This holds by definition of the Dirichlet map $ \mathcal{D} $.\\
 \noindent  \textbf{Proof of Proposition \ref{Lemma Local Lipschitz}.}
Recall that $\tilde{\mathcal{K}}:=\mathcal{K}+ (\lambda-\mathcal{A}_{\beta-1})  \mathcal{D} \mathcal{M}$. Hence, by definition, we have $(\lambda-\mathcal{A}_{\beta-1}) \mathcal{D} \in L(\partial X , X_{\beta-1})$ for $\lambda \in \varrho(\mathcal{A}_0)=\varrho(\mathcal{A}_{\beta-1})$. So that, to prove the Lipschitz continuity of $ \tilde{\mathcal{K}} $ in bounded sets, it suffices to examine the operators $ \mathcal{K} $ and $\mathcal{M}$. Indeed, let $(\varphi_1,\cdots,\varphi_5)^*:=\varphi, (\upsilon_1,\cdots,\upsilon_5)^*:=\upsilon \in X_{\alpha} $ and $R >0$ be such that $  \| \varphi\|_{\alpha}, \ \| \upsilon\|_{\alpha} \leq R $. By construction, the functions $\mathcal{F}, \ \mathcal{G}$ and $\mathcal{H}$ are (pointwisely) Lipschitz continuous in bounded sets. That is, there exist  $ L^i_R \geq 0,$ $ i=1,2,3 $ (depending only on $R$) such that
\begin{align*}
| \mathcal{F}(\rho_1,\rho_3)(x)-\mathcal{F}(\upsilon_1,\upsilon_3)(x)|\leq L^1_R \left( | \rho_1(x)-\upsilon_1(x)|+| \rho_3(x)-\upsilon_3(x)|\right), \quad x\in \Omega 
 \end{align*} 
  \begin{align*}
| \mathcal{G}(\rho_1,\rho_2)(x)-\mathcal{G}(\upsilon_1,\upsilon_2)(x)|\leq L^2_R \left( | \rho_1(x)-\upsilon_1(x)|+| \rho_2(x)-\upsilon_2(x)|\right),\quad x\in \Omega 
 \end{align*} 
 and 
 \begin{align*}
| \mathcal{H}(\rho_2,\rho_3)(x)-\mathcal{H}(\upsilon_2,\upsilon_3)(x)| \leq L^3_R \left( | \rho_2(x)-\upsilon_2(x) |+| \rho_3(x)-\upsilon_3(x) |\right), \quad x\in \Omega 
 \end{align*}
for some $ L^i_R \geq 0,$ $ i=1,2,3 $. So, by passing to the $L^p$-norm, and using the continuous embedding $ W^{2\alpha,p}\hookrightarrow C^1 (\overline{\Omega}) $ we obtain that 
\begin{align}
\| \mathcal{F}(\rho_1,\rho_3)-\mathcal{F}(\upsilon_1,\upsilon_3)\|\leq |\Omega |  L^1_R \left( \| \rho_1-\upsilon_1 \|_{0,\alpha}+\| \rho_3-\upsilon_3 \|_{0,\alpha}\right), \label{Lemma1 Formu1}
\end{align} 
\begin{align}
\| \mathcal{G}(\rho_1,\rho_2)-\mathcal{G}(\upsilon_1,\upsilon_2)\|\leq | \Omega |  L^2_R \left( \| \rho_1-\upsilon_1 \|_{0,\alpha}+\| \rho_2-\upsilon_2 \|_{0,\alpha} \right), \label{Lemma1 Formu2}
 \end{align} 
 and 
 \begin{align}
\| \mathcal{H}(\rho_2,\rho_3)-\mathcal{H}(\upsilon_2,\upsilon_3) \| \leq | \Omega | L^3_R \left( \| \rho_2-\upsilon_2 \|_{0,\alpha}+\| \rho_3-\upsilon_3 \|_{0,\alpha} \right), \label{Lemma1 Formu3}
 \end{align}
 where $|\Omega |$ is the Lebesgue measure of $\Omega$.
Now, we show that the terms $ \nabla \cdot  (\varphi_2 V_{2}(\varphi)\nu)) $, $ \nabla \cdot (\varphi_3 V_{3}(\varphi)\nu) $ are Lipschitz continuous in bounded sets in $ X_{\alpha} $. So, a straightforward calculus yields that
$$ \varphi_2 V_{2}(\varphi)(x)-   \upsilon_2 V_{2}(\upsilon)(x)=V_{2,max}\left( \left( 1-\tilde{\varphi}(x) \right) \left[ \varphi_2 (x) -\upsilon_2 (x)\right] +\upsilon_2 (x) \sum_{i=1}^{5} \left[ \varphi_i (x) -\upsilon_i (x)\right]\right) , \quad x\in \Omega. $$

Furthermore, using the regularity of $\varphi$, $\upsilon$ and $\nu:=\nu_{|\Omega} \in W^{1,\infty}(\Omega)$ since the gradient operator $\nabla $ is linear, we obtain that
\begin{eqnarray*}
&&\nabla \cdot ( \varphi_2 V_{2}(\varphi)\nu(x))-   \nabla \cdot (\upsilon_2 v_{2}(\upsilon)\nu (x))\\
&=& \nabla \cdot ( \varphi_2 V_{2}(\varphi)\nu(x))- \nabla \cdot ( \varphi_2 v_{2}(\upsilon)\nu(x))+\nabla \cdot ( \varphi_2 v_{2}(\upsilon)\nu(x))-  \nabla \cdot (\upsilon_2 v_{2}(\upsilon)\nu (x))\\
&=&\underbrace{ \nabla  \varphi_2 (x)\cdot(v_{2}(\varphi)\nu(x))- \nabla   \varphi_2(x) \cdot (v_{2}(\upsilon)\nu(x)) + \varphi_2 (x)\nabla \cdot(v_{2}(\varphi)\nu(x))   -\varphi_2(x) \nabla \cdot (v_{2}(\upsilon)\nu(x))}_{I_1 (x)} \\
&&+\underbrace{\nabla  \varphi_2 (x)\cdot ( v_{2}(\upsilon)\nu(x)) -  \nabla \upsilon_2 (x)\cdot ( v_{2}(\upsilon)\nu (x))+ \varphi_2 (x)\nabla \cdot ( v_{2}(\upsilon)\nu(x)) -   \upsilon_2 (x)\nabla\cdot (v_{2}(\upsilon)\nu (x))}_{I_2 (x)}, \quad x\in \Omega.
\end{eqnarray*}
Then, we have
\begin{align*}
\mid I_1 (x) \mid &\leq  M_{V_{2}} \left( \mid \nabla  \varphi_2 (x)\mid \sum_{i=1}^{5} \mid \varphi_i(x) -\upsilon_i(x) \mid + \mid \varphi_2 (x)\mid\sum_{i=1}^{5} \mid \nabla (\varphi_i(x) -\upsilon_i(x)) \mid \right) , \quad x\in \Omega,
\end{align*}
\begin{align*}
\mid I_2 (x) \mid &\leq  M_{V_{2}} \left( \mid \nabla  (\varphi_2 (x)-\upsilon_2(x))\mid \sum_{i=1}^{5} (1+\mid \varphi_i(x)\mid) + \mid \varphi_2 (x)-\upsilon_2 (x) \mid\sum_{i=1}^{5} \mid \nabla (\upsilon_i(x) \mid \right)  , \quad x\in \Omega,
\end{align*}
where $ M_{V_{2}}=V_{2,max} |\nu|_{\infty} $. Hence, we have 
\begin{eqnarray*}
&& \mid \nabla \cdot ( \varphi_2 V_{2}(\varphi)\nu(x))-   \nabla \cdot (\upsilon_2 v_{2}(\upsilon)\nu (x)) \mid \leq \\
&&  M_{V_{2}} \left( \mid \nabla  \varphi_2 (x)\mid \sum_{i=1}^{5} \mid \varphi_i(x) -\upsilon_i(x) \mid + \mid \varphi_2 (x)\mid\sum_{i=1}^{5} \mid \nabla (\varphi_i(x) -\upsilon_i(x)) \mid \right)  \\
&&+ M_{V_{2}} \left( \mid \nabla  (\varphi_2 (x)-\upsilon_2(x))\mid \sum_{i=1}^{5} (1+\mid \varphi_i(x)\mid) + \mid \varphi_2 (x)-\upsilon_2 (x) \mid\sum_{i=1}^{5} \mid \nabla (\upsilon_i(x) \mid \right), \quad x\in \Omega.
\end{eqnarray*}
%\begin{eqnarray}
%&&\nabla \cdot  \varphi_2 V_{2}(\varphi)\nu(x)-   \nabla \cdot \upsilon_2 v_{2}(\upsilon)\nu (x)\nonumber \\
%&=& V_{2,max} \nabla \left( \left( 1-\tilde{\varphi}(x) \right) \left[ \varphi_2 (x) -\upsilon_2 (x)\right]\nu +\upsilon_2 (x) \sum_{i=1}^{5} \left[ \varphi_i (x) -\upsilon_i (x)\right] \nu \right) \nonumber \\
%&=&  V_{2,max} \left( \left( 1-\tilde{\varphi}(x) \right) \nabla \left[ \varphi_2 (x) -\upsilon_2 (x)\right]\nu + \left[ \varphi_2 (x) -\upsilon_2 (x)\right] \nabla  \left( 1-\tilde{\varphi}(x) \right)\nu \right) \nonumber \\
%&&+ V_{2,max} \left( \sum_{i=1}^{5} \left[ \varphi_i (x) -\upsilon_i (x)\right] \nabla \upsilon_2 (x)  \nu + 
% \upsilon_2 (x)  \sum_{i=1}^{5} \nabla \left[ \varphi_i (x) -\upsilon_i (x)\right] \nu\right) ,
%   \quad x\in \overline{\Omega}.  \nonumber \\
%\end{eqnarray}
Therefore, by passing $L^p$-norm, and using the continuous embedding $ W^{2\alpha,p}\hookrightarrow C^1 (\overline{\Omega}) $ again, we have
% \begin{equation}
% \| \varphi_2 V_{2}(\varphi)-   \upsilon_2 V_{3}(\upsilon)\|_{\infty} \leq  V_{3,max} \left[ (1+\| \rho\|) \| \rho_3-\upsilon_3\|_{\infty} + \| \upsilon_3 \|_{\infty} \| \rho-\upsilon\|\right] \label{Lemma1 Formu6}
%\end{equation}  
%and 
\begin{eqnarray*}
\| \nabla \cdot ( \varphi_2 V_{2}(\varphi)\nu)-   \nabla \cdot (\upsilon_2 V_{2}(\upsilon) \nu) 
\| & \leq & 
  |\Omega | M_{V_2}  \left( \left( 1+ \| \varphi \| \right) \| \nabla (\varphi_2  -   \upsilon_2 ) \|_{\infty} + \| \nabla \varphi \nu \| \| \varphi_2 -\upsilon_2 \|_{\infty}  \right) \nonumber \\
&&+|\Omega | M_{V_2} \left(\| \varphi -\upsilon\| \| \nabla \upsilon_2  \nu \|_{\infty} + 
\| \upsilon_2 \|_{\infty} \| \nabla (\varphi -\upsilon ) \nu \|\right) .
\end{eqnarray*}
This leads to 
\begin{eqnarray}
\| \nabla \cdot ( \varphi_2 V_{2}(\varphi)\nu)-   \nabla \cdot (\upsilon_2 V_{2}(\upsilon) \nu  \| & \leq &  |\Omega | L^4_R \|\varphi - \upsilon \|_{\alpha}
  \label{Lemma1 Formu5}
\end{eqnarray}
Similarly, we obtain that
% \begin{equation}
% \| \varphi_3 V_{3}(\varphi)-   \upsilon_3 V_{3}(\upsilon)\|_{\infty} \leq  V_{3,max} \left[ (1+\| \rho\|) \| \rho_3-\upsilon_3\|_{\infty} + \| \upsilon_3 \|_{\infty} \| \rho-\upsilon\|\right] \label{Lemma1 Formu6}
%\end{equation}  
and
\begin{eqnarray*}
\| \nabla \cdot  \varphi_3 V_{3}(\varphi)\nu-   \nabla \cdot \upsilon_3 V_{3}(\upsilon) \nu 
\| & \leq & 
  |\Omega |M_{V_3}  \left( \left( 1+ \| \varphi \| \right) \| \nabla (\varphi_3  -  \upsilon_3) \|_{\infty} + \| \nabla \varphi \nu \| \| \varphi_3 -\upsilon_3 \|_{\infty}  \right) \nonumber \\
&&+ |\Omega | M_{V_3} \left(\| \varphi -\upsilon\| \| \nabla \upsilon_3  \nu \|_{\infty} + 
\| \upsilon_3 \|_{\infty} \| \nabla (\varphi -\upsilon ) \nu \| \right),
\end{eqnarray*}
where $M_{V_3}= V_{3,max} |\nu |_{\infty}  $, and
\begin{eqnarray}
\| \nabla \cdot  \varphi_3 V_{3}(\varphi)\nu-   \nabla \cdot \upsilon_3 V_{3}(\upsilon) \nu 
\| & \leq & |\Omega | L^5_R \| \varphi -\upsilon\|_{\alpha} . \label{Lemma1 Formu7}
\end{eqnarray}
Now, we show that, there exists $L^9_{R}\geq 0$ such that 
$$\| \mathcal{M}\varphi -\mathcal{M}\upsilon \|_{\partial X} \leq L^{9}_{R} \|\varphi - \upsilon\|_{\alpha} .$$
To obtain that, we use the continuous embedding $ W^{1,p}( \Omega)^{5} \hookrightarrow  X_{\alpha} $ and the trace theorem, see \cite{Brezis}.
%\begin{eqnarray*}
%&&(\varphi_2 V_{2}(\varphi)(x)-   \upsilon_2 V_{2}(\upsilon)(x))\cos(\varpi)=\\ && V_{2,max}\left( \left( 1-\tilde{\varphi}(x) \right) \left[ \varphi_2 (x) -\upsilon_2 (x)\right] +\upsilon_2 (x) \sum_{i=1}^{5} \left[ \varphi_i (x) -\upsilon_i (x)\right]\right) \cos(\varpi)
%\end{eqnarray*} 
Indeed, first, we have,
%for $x\in \Omega $, which yields that 
$$ \mid \varphi_2 V_{2}(\varphi)(x)-   \upsilon_2 V_{2}(\upsilon)(x)\mid \leq V_{2,max}\left( ( 1+\mid \tilde{\varphi}(x)\mid) \mid \varphi_2 (x) -\upsilon_2 (x)\mid +\mid\upsilon_2 (x)\mid \sum_{i=1}^{5}\mid \varphi_i (x) -\upsilon_i (x)\mid\right), \;  x\in \Omega $$
%\begin{eqnarray*}
%&&\mid (\varphi_2 V_{2}(\varphi)(x)-   \upsilon_2 V_{2}(\upsilon)(x))\cos(\varpi)\mid \\
%&\leq & V_{2,max}\left( \left( 1+\mid\tilde{\varphi}(x) \mid\right) \mid \varphi_2 (x) -\upsilon_2 (x)\mid +\mid \upsilon_2 (x) \mid \sum_{i=1}^{5} \mid \varphi_i (x) -\upsilon_i (x)\mid\right) , \; x\in \Omega. 
%\end{eqnarray*}
Then, using the corresponding norms, we have
$$ | \varphi_2 V_{2}(\varphi)-   \upsilon_2  V_{2}(\upsilon)|_{p} \leq | \Omega | V_{2,max}\left( ( 1+\| \varphi\|) \| \varphi_2  -\upsilon_2 \|_{\infty} +\|\upsilon_2 \|_{\infty}\| \varphi -\upsilon\|\right) , $$
%and 
%$$ | (\varphi_2 V_{2}(\varphi)(x)-   \upsilon_2 V_{2}(\upsilon)(x))\cos(\varpi)|_{p} \leq | \Omega | V_{2,max}\left( ( 1+\| \varphi\|) \| \varphi_2  -\upsilon_2 \|_{\infty} +\|\upsilon_2 \|_{\infty}\| \varphi -\upsilon\|\right) . $$
That is using the embedding $ X_{\alpha} \hookrightarrow C^{1}(\overline{\Omega})^{5} $, we obtain that

\begin{equation}
 | \varphi_2 V_{2}(\varphi)-   \upsilon_2  V_{2}(\upsilon)|_{p} \leq L^{6}_{R} \| \varphi -\upsilon\|_{\alpha}. \label{A.7}
\end{equation}
%and 
%\begin{equation}
% | (\varphi_2 V_{2}(\varphi)-   \upsilon_2 V_{2}(\upsilon))\cos(\varpi)|_{p} \leq L^{7}_{R} \| \varphi -\upsilon\|_{\alpha} . \label{A.8} 
%\end{equation}
Arguing similarly, we obtain that
\begin{equation}
 | \varphi_3 V_{3}(\varphi)-   \upsilon_3 V_{3}(\upsilon)|_{p} \leq L^{7}_{R} \| \varphi -\upsilon\|_{\alpha}). \label{A.8}
\end{equation}
%and $$ | (\varphi_3 V_{3}(\varphi)(x)-   \upsilon_3 V_{3}(\upsilon)(x))\cos(\varpi)|_{p} \leq L^{9}_{R} \| \varphi -\upsilon\|_{\alpha} . $$
In otherwise, by estimating, this time the gradient of the corresponding terms, we obtain that 
\begin{equation}
 |  \nabla\varphi_2 V_{2}(\varphi)-    \nabla \upsilon_2  V_{2}(\upsilon)|_{p} \leq L^{8}_{R} \| \varphi -\upsilon\|_{\alpha}, \label{A.9}
\end{equation}
%and $$ |  \nabla \varphi_2 V_{2}(\varphi)\cos(\varpi) -    \nabla \upsilon_2 V_{2}(\upsilon)\cos(\varpi)|_{p} \leq L^{11}_{R} \| \varphi -\upsilon\|_{\alpha} . $$
and
\begin{equation} | \nabla  \varphi_3 V_{3}(\varphi)-    \nabla \upsilon_3 V_{3}(\upsilon)|_{p} \leq L^{9}_{R} \| \varphi -\upsilon\|_{\alpha}. \label{A.10}
\end{equation}
%and $$ | \nabla \varphi_3 V_{3}(\varphi)(x)\cos(\varpi) -   \nabla  \upsilon_3 V_{3}(\upsilon)(x))|_{p} \leq L^{13}_{R} \| \varphi -\upsilon\|_{\alpha} . $$
Hence, using the estimates  \eqref{A.7}-\eqref{A.10}, we obtain $ L^{9}_{R} \geq 0$ depending only on $R$ and the norm of the trace operator, such that
$$\| \mathcal{M}\varphi -\mathcal{M}\upsilon \|_{\partial X} \leq L^{9}_{R} \|\varphi - \upsilon\|_{\alpha} .$$
Therefore, since $(\omega-\mathcal{A}_{\beta-1}) \mathcal{D}\in L(\partial X , X_{\beta-1})$, we obtain that,
\begin{eqnarray}
\nonumber \| (\omega-\mathcal{A}_{\beta-1}) \mathcal{D} (\mathcal{M}\varphi- \mathcal{M}\upsilon) \|_{ X_{\beta-1}}&\leq & c \| \mathcal{M}\varphi -\mathcal{M}\upsilon  \|_{\partial
 X} \\ &\leq & c L_{R}^{9} \| \varphi -\upsilon  \|_{\alpha}, \label{M estilmates}
\end{eqnarray} 
for some positive $c \geq \| (\omega-\mathcal{A}_{\beta-1}) \mathcal{D}\|_{\partial X \rightarrow X_{\beta -1} }$.
Furthermore, the functions $\gamma$ and $\phi$ (by construction, see \eqref{phi} and \eqref{gamma}) are respectively uniformly $ l_\gamma $-Lipschitz continuous and $ l_\phi $-Lipschitz continuous with respect to $t$ so that $0 \leq \gamma(t) , \phi(t) \leq 1$. Then, we have
\begin{eqnarray}
\| \gamma(t) \varphi_4 -\gamma(s) \upsilon_4 \| \leq |\Omega | l_\gamma (\|\varphi_4 \|_{\infty} \mid t-s \mid + \|\varphi_4 -\upsilon_4  \|_{\alpha}), \quad t,s \geq 0, \label{Lemma1 Formu8}
\end{eqnarray}
and
\begin{eqnarray}
\| \phi(t) \varphi_3 -\phi(s) \upsilon_3 \|  \leq |\Omega | l_\phi (\|\varphi_3 \|_{\alpha} \mid t-s \mid + \|\varphi_3 -\upsilon_3  \|_{\alpha}), \quad t,s \geq 0. \label{Lemma1 Formu9}
\end{eqnarray}
Thus,  from \eqref{Lemma1 Formu1}-\eqref{Lemma1 Formu7}, \eqref{Lemma1 Formu8},\eqref{Lemma1 Formu9} and using the embedding $ X\hookrightarrow  X_{\beta-1} $, we can find $L^{10}_R \geq 0 $ such that
\begin{align}
\| \mathcal{K}(t,\rho)-\mathcal{K}(s,\upsilon) \|_{\beta-1}
\leq L^{10}_R (\mid t-s \mid+ \|\rho-\upsilon \|_{\alpha}) \quad \text{ for all } t,s \geq 0. \label{K estimate}
\end{align}
Consequently, from \eqref{K estimate} and \eqref{M estilmates}, we have
\begin{align*}
\| \tilde{\mathcal{K}}(t,\rho)-\tilde{\mathcal{K}}(s,\upsilon) \|_{\beta-1} 
\leq L_R (\mid t-s \mid+ \|\rho-\upsilon \|_{\alpha}) \quad \text{ for all } t,s \geq 0 \text{ and } \varphi, \upsilon \in X_{\alpha}.
\end{align*}
This proves the result.\\

The following proof follows the lines of the proofs of \cite[Section 4.3]{Pazy}.\\
\noindent\textbf{Proof of Lemma \ref{Lemma regularity}.}
 Let us define
\begin{align*}
v(t)= \int_{0}^{t} \mathcal{T}_{\beta-1}(t-s) \mathcal{B}(t)  ds + \int_{0}^{t} \mathcal{T}_{\beta-1}(t-s) \left[ \mathcal{B}(s)- \mathcal{B}(t)\right]  ds :=  v_1 (t) +v_2 (t), \quad 0 \leq t \leq  T.
\end{align*}
It is clear that $v_1  \in C^1 ((0,T],X_{\beta}) $, this holds due to the analyticity of the semigroup $(\mathcal{T}_{\beta-1}(t))_{t\geq 0}$ in $X_{\beta-1}$, see Proposition \ref{Proposition:3.5}.
%, so far, we have $v_1 (t) \in X_{\alpha} $ since $$  \|\mathcal{T}_{\beta-1}(t)\|_{L(X_{\alpha},X_{\beta-1})} \leq l_{\beta} \ t^{\beta-\alpha-1}, \quad t>0. $$
So it suffices to prove that $v_2(t) \in X_{\beta}:=D(\mathcal{A}_{\beta -1}) $ for all $t\in (0,T] $ and $\mathcal{A}_{\beta -1} v_2 (\cdot) $ is continuous on $[0,T]$. To this end, let $\varepsilon >0$ and consider
\begin{equation*}
            v^{\varepsilon}_2(t)= \left\{
                 \begin{aligned}
   &  \int_{0}^{t-\varepsilon} \mathcal{T}_{\beta-1}(t-s) \left[ \mathcal{B}(s)- \mathcal{B}(t)\right]  ds  &\text{ for }& t\geq \varepsilon \\
   & 0  &\text{ for }& t< \varepsilon.
                 \end{aligned}
                 \right.
\end{equation*}
So, the analyticity of the extension semigroup $ (\mathcal{T}_{\beta-1}(t))_{t\geq 0} $ on $X_{\beta-1}$ yields that 
$$ \mathcal{T}_{\beta-1}(t-s) \left[ \mathcal{B}(s)- \mathcal{B}(t)\right] \in D(\mathcal{A}_{\beta-1})=X_{\beta} \quad \text{ for } 0\leq s \leq t-\varepsilon.$$ 
Then,  $v^{\varepsilon}_2(t) \in D(\mathcal{A}_{\beta-1})$. Moreover, $ v^{\varepsilon}_2(t)  $ converges to $ v_2 (t)$ as $\varepsilon \rightarrow 0$. Since the operator $ \mathcal{A}_{\beta-1} $ is closed, to conclude, we only need to show that $ \mathcal{A}_{\beta-1} v^{\varepsilon}_{2} (t) =\displaystyle \int_{0}^{t-\varepsilon}  \mathcal{A}_{\beta-1}\mathcal{T}_{\beta-1}(t-s) \left[ \mathcal{B}(s)- \mathcal{B}(t)\right]  ds $ converges in $X_{\beta-1}$ as $\varepsilon \rightarrow 0$. That is, by the closedness of $\mathcal{A}_{\beta-1}$ we have
\begin{align*}
 \mathcal{A}_{\beta-1} v^{\varepsilon}_{2} (t)-\int_{0}^{t}  \mathcal{A}_{\beta-1}\mathcal{T}_{\beta-1}(t-s) \left[ \mathcal{B}(s)- \mathcal{B}(t)\right]  ds & = \int_{t-\varepsilon}^{t}  \mathcal{A}_{\beta-1}\mathcal{T}_{\beta-1}(t-s) \left[ \mathcal{B}(s)- \mathcal{B}(t)\right]  ds.
\end{align*}
Furthermore, the analytitcity of $ (\mathcal{T}_{\beta-1}(t))_{t\geq 0} $ yields that 
\begin{equation}
 \| \mathcal{A}_{\beta-1}\mathcal{T}_{\beta-1}(t)\|_{L(X_{\beta-1})} \leq l_0 \ t^{-1}, \quad t>0 \label{Lemma Holder conti Semig}
\end{equation}
for some $ l_0 \geq 0 $. Hence, using \eqref{Lemma Holder conti B}-\eqref{Lemma Holder conti Semig}, we conclude that 
$$ \| \mathcal{A}_{\beta-1} v^{\varepsilon}_{2} (t)-\int_{0}^{t}  \mathcal{A}_{\beta-1}\mathcal{T}_{\beta-1}(t-s) \left[ \mathcal{B}(s)- \mathcal{B}(t)\right]  ds \|_{\beta-1} \leq l l_0 \int_{0}^{\varepsilon} \sigma^{\eta-1} d \sigma \rightarrow 0 \text{ as } \varepsilon \rightarrow 0.$$
This proves that $ v_2(t) \in X_{\beta}$ and that $\mathcal{A}_{\beta-1} v_{2} (t)=\displaystyle\int_{0}^{t}  \mathcal{A}_{\beta-1}\mathcal{T}_{\beta-1}(t-s) \left[ \mathcal{B}(s)- \mathcal{B}(t)\right]  ds $ for $0<t \leq T$.
Moreover, $\mathcal{A}_{\beta-1}v(\cdot)$ is continuous in $(0,T]$ by construction. The continuity at $t=0$ can be obtained easily using the estimates \eqref{Lemma Holder conti B} and \eqref{Lemma Holder conti Semig}. This proves the result.
%\end{proof}
\section{Tables of the functions and the parameters of the APC model}\label{Annexe:B}
In the sequel, we briefly recall the functions and the parameters of the APC model \eqref{eq:main1System APC2 ODEs}.
\begin{table}[h!]

\caption{Functions in the APC model}

\label{tab:functions-systeme-ACP}

\centering

\begin{tabular}{l l}

\textbf{Functions}            &    \textbf{Notation}    \\

\hline

Beginning of the

catastrophe                    &        $\gamma(t)$        \\

Return to a daily behavior    &        $\phi(t)$    \\

Imitation functions        &        $\mathcal{F},\,\mathcal{G},\,\mathcal{H}$        \\

\end{tabular}

\end{table}

\begin{table}[h!]

\caption{Parameters of the APC model}

\label{tab:parameters-systeme-ACP}

\centering

\begin{tabular}{l l}

\textbf{Parameters}                            & \textbf{Notation}        \\

\hline

Intrinsic evolution from alert to control        &        $b_1$                     \\

Intrinsic evolution from alert to panic       &        $b_2$                     \\

Intrinsic evolution from control to alert       &        $b_3$                     \\

Intrinsic evolution from panic to alert         &        $b_4$                     \\

Intrinsic evolution from  panic to control    & $c_1$                    \\

Intrinsic evolution from control to panic        & $c_2$                    \\

Mortality rates for alert, panic and control populations respectively & $\delta_1$, $\delta_2$, $\delta_3$\\

Imitation from alert to control &$\alpha_{13}$\\

%Imitation from control to alert &$\alpha_2$\\

Imitation from alert to panic &$\alpha_{12}$\\

%Imitation from panic to alert &$a_2$\\

Imitation from panic to control &$\alpha_{23}$\\

Imitation from control to panic &$\alpha_{32}$\\

%Imitation entre alerte et contrôle            & $\alpha_1,\,\alpha_2$    \\

%Imitation entre alerte et panique            & $\delta_1,\,\delta_2$    \\

%Imitation entre panique et contrôle            & $\mu_1,\,\mu_2$            \\

%Domino effects                            & $s_1,\,s_2$                \\

\hline

\end{tabular}

\end{table}
\end{appendices}
%%%%%%
\vspace*{1cm}
\section*{ACKNOWLEDGMENT}
This work has been supported by the French government, through the National Research Agency (ANR) under the
Societal Challenge 9 “Freedom and security of Europe, its citizens and residents” with the reference number ANR-
17-CE39-0008, co-financed by French Defence Procurement Agency (DGA) and The General Secretariat for Defence and
National Security (SGDSN).

\bibliographystyle{plain}

\end{document}